\definecolor{darkgreen}{RGB}{45, 119, 75}
\newcommand{\supp}{\text{supp }}
\newtheorem{teo}{Theorem}[section]
\newtheorem{coro}[teo]{Corollary}
\newtheorem{lema}[teo]{Lemma}
\newtheorem{propo}[teo]{Proposition}
\newtheorem{remark}[teo]{Remark}
\newtheorem{defn}[teo]{Definition}
\numberwithin{equation}{section}
\begin{document}

\title[Semigroups generated by even powers of~Dunkl operators]{On semigroups generated by sums of even powers of~Dunkl operators}

\author[ J. Dziuba\'nski and A. Hejna]{Jacek Dziuba\'nski and Agnieszka Hejna}

\subjclass[2010]{47D06, 47D03, 34G10, 42B37, 43A32}
\keywords{Semigroups of linear operators, Dunkl operators, Evolution equations}

\begin{abstract}
On the Euclidean space $\mathbb R^N$ equipped with a normalized  root system $R$, a multiplicity function $k\geq 0$, and the associated measure $dw(\mathbf x)=\prod_{\alpha\in R} |\langle \mathbf x,\alpha\rangle|^{k(\alpha)}d\mathbf x$ we consider the differential-difference operator 
$$L=(-1)^{\ell+1} \sum_{j=1}^m T_{\zeta_j}^{2\ell},$$ where $\zeta_1,...,\zeta_m$ are  nonzero vectors in $\mathbb R^N$, which span $\mathbb R^N$, and $T_{\zeta_j}$ are the Dunkl operators.  The operator $L$ is essentially self-adjoint on $L^2(dw)$ and generates a semigroup $\{S_t\}_{t \geq 0}$ of linear self-adjoint contractions, which has the form $S_tf(\mathbf x)=f*q_t(\mathbf{x})$, $q_t(\mathbf x)=t^{-\mathbf N\slash (2\ell)}q(\mathbf x\slash t^{1\slash (2\ell)})$, where $q(\mathbf x)$ is the Dunkl transform of the function  $ \exp(-\sum_{j=1}^m \langle \zeta_j,\xi\rangle^{2\ell})$. We prove that $q(\mathbf x)$  satisfies the following exponential decay:
$$ |q(\mathbf x)| \lesssim  \exp(-c \| \mathbf x\|^{2\ell\slash (2\ell-1)})$$ 
for a certain constant $c>0$. Moreover, if $q(\mathbf x,\mathbf y)=\tau_{\mathbf x}q(-\mathbf y)$, then 
$|q(\mathbf x,\mathbf y)|\lesssim w(B(\mathbf x,1))^{-1} \exp(-c d(\mathbf x,\mathbf y)^{2\ell \slash (2\ell-1)})$, where $d(\mathbf x,\mathbf y)=\min_{\sigma\in G}\| \mathbf x- \sigma(\mathbf y)\| $, $G$~is the reflection group for $R$, and $\tau_{\mathbf x}$ denotes the Dunkl translation.  
\end{abstract}

\address{J. Dziuba\'nski and A. Hejna, Uniwersytet Wroc\l awski,
Instytut Matematyczny,
Pl. Grunwaldzki 2/4,
50-384 Wroc\l aw,
Poland}
\email{jdziuban@math.uni.wroc.pl}
\email{hejna@math.uni.wroc.pl}

\thanks{
Research supported by the National Science Centre, Poland (Narodowe Centrum Nauki), Grant 2017/25/B/ST1/00599.}

\maketitle
\section{Introduction}
Let   $\zeta_1,...,\zeta_m\in\mathbb R^N$ be non-zero vectors which span $\mathbb R^N$. For $\ell \in \mathbb{N}$ (which will be fixed throughout the paper) we consider the symmetric differential-difference operator $$ L=(-1)^{\ell +1} \sum_{j=1}^m T_{\zeta_j}^{2\ell},$$
where $T_{\zeta_j}$ are Dunkl operators associated with a normalized system of roots $R$ and a multiplicity function $k\geq 0$ (see Section~\ref{sec:preliminaries} for details). {Let $dw$ denote the related measure (see \eqref{eq:measure}).} The operator $L$ is essentially self-adjoint on $L^2(dw)$ and its closure generates a semigroup of self-adjoint linear contractions $\{S_t\}_{t \geq 0}$ on $L^2(dw)$. The semigroup has the form
\begin{equation}\label{eq:action_semigroup}
 S_tf(\mathbf x)=f*q_t(\mathbf x),
 \end{equation}
where $q_t(\mathbf x)=\mathcal F^{-1}(\exp(-t\sum_{j=1}^m \langle \zeta_j,\cdot \rangle^{2\ell} )(\mathbf x).$ {Here and subsequently, $*$ denotes the Dunkl convolution, while $\mathcal F$ and $\mathcal F^{-1}$ stand for the Dunkl transform and its inverse respectively (see~\eqref{eq:DunklTransform})}. Clearly, $ q_t \in \mathcal S(\mathbb R^N)$, and if we set $q(\mathbf x)=q_1(\mathbf x)$, then, by homogeneity,  
\begin{equation}\label{eq:appropriate_scaling}
    q_t(\mathbf x)=t^{-\mathbf N\slash (2\ell) } q\Big(\frac{\mathbf x}{ t^{1\slash (2\ell)}}\Big).
\end{equation}
Our first result is to prove that the decay of $q(\mathbf x)$  is exponential. This is stated in the following theorem. 
\begin{teo}\label{teo:main_no_translation}
There are  constants $C,c>0$ such that for all $\mathbf{x} \in \mathbb{R}^N$ we have 
$$ |q(\mathbf x)|\leq C\exp(-c\| \mathbf x\|^{2\ell\slash (2\ell -1)}).$$
\end{teo}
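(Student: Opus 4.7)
The plan is to prove the bound via Gevrey-type moment estimates, converting exponential spatial decay of $q$ into factorial growth of the $L^1$-norms of iterated Dunkl derivatives of $e^{-\Phi}$ on the Fourier side.

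Starting from the inverse Dunkl-transform representation $q(\mathbf x) = c_k \int E(i\xi, \mathbf x) e^{-\Phi(\xi)} dw(\xi)$, I would exploit the eigenfunction identity $T_\zeta^\xi E(i\xi, \mathbf x) = i\langle \zeta, \mathbf x\rangle E(i\xi, \mathbf x)$ (viewed as a function of $\xi$), together with the anti-symmetry $\int (T_\zeta f)g\, dw = -\int f (T_\zeta g)\, dw$ for Schwartz functions, to obtain by $n$-fold integration by parts
\[
\langle \zeta, \mathbf x\rangle^n q(\mathbf x) \;=\; i^n c_k \int_{\mathbb R^N} E(i\xi, \mathbf x)\,(T_\zeta)^n e^{-\Phi(\xi)}\, dw(\xi).
\]
Using the Rösler bound $|E(i\xi, \mathbf x)| \leq 1$ for real $\xi, \mathbf x$, this yields the moment inequality
\[
|\langle \zeta, \mathbf x\rangle|^n |q(\mathbf x)| \;\leq\; c_k \|(T_\zeta)^n e^{-\Phi}\|_{L^1(dw)}.
\]

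The heart of the argument is then the Gevrey-type estimate
\[
\|(T_\zeta)^n e^{-\Phi}\|_{L^1(dw)} \;\leq\; C A^n (n!)^{(2\ell-1)/(2\ell)}
\]
for suitable constants $C, A > 0$. In the classical case $k \equiv 0$, $T_\zeta = \partial_\zeta$, and an induction gives $\partial_\zeta^n e^{-\Phi(\xi)} = P_n(\xi) e^{-\Phi(\xi)}$ where $P_n$ is a polynomial of degree $(2\ell-1)n$ with coefficients of controlled size. Since $\Phi(\xi) \geq c_1 \|\xi\|^{2\ell}$, the $L^1$ norm is bounded by $C^n \int \|\xi\|^{(2\ell-1)n} e^{-c_1\|\xi\|^{2\ell}} d\xi$; a saddle-point (Stirling) analysis at $\|\xi\| \sim n^{1/(2\ell)}$ extracts the factor $(n!)^{(2\ell-1)/(2\ell)}$. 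In the Dunkl case, the non-local piece of $T_\zeta$ produces divided differences of the form $(f - f\circ\sigma_\alpha)/\langle \xi, \alpha\rangle$; using that $|\xi - \sigma_\alpha \xi| = 2|\langle \xi, \alpha\rangle|/\|\alpha\|$, these are bounded pointwise by $\|\nabla f\|_{L^\infty(\text{segment})}$, so by induction each application of $T_\zeta$ is dominated by a classical derivative (up to bounded constants) and the factorial exponent $(2\ell-1)/(2\ell)$ is preserved.

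Finally, since $\{\zeta_j\}_{j=1}^m$ spans $\mathbb R^N$, there exists $c_0 > 0$ such that for every $\mathbf x \in \mathbb R^N$ we have $\max_j |\langle \zeta_j, \mathbf x\rangle| \geq c_0 \|\mathbf x\|$; applying the moment inequality with $\zeta = \zeta_{j(\mathbf x)}$ attaining this maximum and optimizing the free integer $n$ (the optimum is $n \sim \|\mathbf x\|^{2\ell/(2\ell-1)}$) produces, via Stirling, the required bound $|q(\mathbf x)| \leq C\exp(-c\|\mathbf x\|^{2\ell/(2\ell-1)})$.

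The principal technical obstacle is the Dunkl Gevrey estimate. While the classical case reduces to combinatorics of $\partial_\zeta^n e^{-\Phi}$, the iterated Dunkl derivative $(T_\zeta)^n$ generates an exponentially growing number of local/non-local terms after expansion, and one must verify that neither the reflection pieces nor the weight $w(\xi)$ in the $L^1(dw)$ norm worsen the Gevrey exponent beyond $(2\ell-1)/(2\ell)$; a careful induction, treating the reflection part as a perturbation controlled by the classical gradient on orbit segments, is the natural route.
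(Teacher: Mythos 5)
Your overall strategy --- the moment identity $|\langle\zeta,\mathbf x\rangle|^n\,|q(\mathbf x)|\le c_k^{-1}\|T_\zeta^n e^{-\Phi}\|_{L^1(dw)}$ obtained from the skew-symmetry of $T_\zeta$ on $L^2(dw)$ and the eigenfunction equation for $E(i\xi,\mathbf x)$ in the $\xi$-variable, followed by optimization in $n$ --- is legitimate and genuinely different from the paper, which instead runs the semigroup through exponentially weighted $L^2$-spaces via G\aa rding inequalities, Lions' theorem and a Sobolev-type embedding. Your reduction steps, the use of $|E(i\xi,\mathbf x)|\le 1$, the spanning argument and the final Stirling optimization are all correct. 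The gap is exactly where you place it, but it is not a technical afterthought: the Gevrey estimate $\|T_\zeta^n e^{-\Phi}\|_{L^1(dw)}\le CA^n(n!)^{1-1/(2\ell)}$ is essentially equivalent to the theorem itself (by Plancherel and \eqref{eq:transform_deriv}, $\|T_\zeta^n e^{-\Phi}\|_{L^2(dw)}=\|\langle\zeta,\cdot\rangle^n q\|_{L^2(dw)}$), and the argument you sketch for it fails as stated.

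The issue is the claim that ``each application of $T_\zeta$ is dominated by a classical derivative on orbit segments.'' The estimate $|(f-f\circ\sigma_\alpha)(\xi)/\langle\alpha,\xi\rangle|\le C\sup_{[\sigma_\alpha\xi,\xi]}\|\nabla f\|$ is only a pointwise bound, so to iterate it you must control derivatives of the previous stage along segments; after $n$ steps the suprema range over iterated reflected segments that fill out part of $\operatorname{conv}\mathcal O(\xi)$. Since $\Phi(\xi)=\sum_j\langle\zeta_j,\xi\rangle^{2\ell}$ is in general not $G$-invariant and $\operatorname{conv}\mathcal O(\xi)$ contains the origin for every $\xi$ whenever $R$ spans $\mathbb R^N$ (the $G$-average of the orbit is a $G$-fixed vector, hence $0$), one only gets $\sup_{\eta\in\operatorname{conv}\mathcal O(\xi)}e^{-\Phi(\eta)}=1$, so this route yields $|T_\zeta^n e^{-\Phi}(\xi)|\le C^n(n!)^{1-1/(2\ell)}$ with no decay in $\xi$ and hence no finite $L^1(dw)$ norm. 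A correct treatment has to split each divided difference according to whether $|\langle\alpha,\xi\rangle|\le 1$ (the segment then has length $\le\sqrt2$ and stays within bounded distance of $\mathcal O(\xi)$, where $e^{-\Phi}$ is comparable to $e^{-\Phi(\xi)}$ up to a factor $e^{C\|\xi\|^{2\ell-1}}$ that must itself be absorbed) or $|\langle\alpha,\xi\rangle|\ge 1$ (estimate the two endpoint values separately, using $\|\sigma_\alpha\xi\|=\|\xi\|$), and then control the combinatorics of the resulting $(1+|R|)^n$-fold expansion with constants of the form $C^n$, uniformly in $n$. That case analysis and induction is the real content of the theorem and is missing from the proposal.
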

Let  $\tau_{\mathbf{x}}$ denote the Dunkl translation (see \eqref{eq:translation}). Then  $q_t(\mathbf x,\mathbf y)=\tau_{\mathbf x}q_t(-\mathbf y)$ are  the integral kernels of the operators $S_t$ with respect to the measure $dw$, that is, 
$$ S_tf(\mathbf x)=\int_{\mathbb{R}^N} q_t(\mathbf x,\mathbf y)f(\mathbf y)\, dw(\mathbf y).$$ 
Let 
$$d(\mathbf x,\mathbf y)=\min_{\sigma\in G}\| \sigma(\mathbf x)-\mathbf y\|$$
be the distance of the orbit of $\mathbf x$ to the orbit of $\mathbf y$, where $G$ denotes the  Weyl group associated with $R$ (see Section~\ref{sec:preliminaries}). We denote by  $B(\mathbf{x},r)$ the (closed) Euclidean ball centered at $\mathbf{x} \in \mathbb{R}^N$ and radius $r$. 
Our second result expresses the decay of $q_t(\mathbf x,\mathbf y)$ by means of the distance $d(\mathbf x,\mathbf y)$.
\begin{teo}\label{teo:theo2}
There are  constants $C,c>0$ such that for all $\mathbf{x},\mathbf{y} \in \mathbb{R}^{N}$ we have
\begin{equation}\label{eq:main}
   |q(\mathbf x,\mathbf y)|\leq C(\max\{w(B(\mathbf{x},1)),w(B(\mathbf{y},1)\})^{-1} \exp(-cd(\mathbf x,\mathbf y)^{2\ell\slash (2\ell -1)}). 
\end{equation}
\end{teo}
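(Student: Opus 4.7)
The plan is to deduce Theorem~\ref{teo:theo2} from Theorem~\ref{teo:main_no_translation} by dyadically decomposing $q$ and controlling the Dunkl translation of each piece in terms of the orbit distance $d$. By self-adjointness of $S_1$ the kernel is symmetric, $q(\mathbf x,\mathbf y)=q(\mathbf y,\mathbf x)$, so it is enough to prove the bound with $w(B(\mathbf x,1))^{-1}$ in place of the maximum and then symmetrize; throughout we write $\alpha:=2\ell\slash (2\ell-1)$.

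Fix a smooth radial partition of unity $1\equiv\sum_{n\geq 0}\eta_n$ on $\mathbb R^N$ with $\eta_0$ supported in $B(0,2)$ and, for $n\geq 1$, $\eta_n$ supported in $\{2^{n-1}\leq \|\mathbf z\|\leq 2^{n+1}\}$. Set $f_n:=q\cdot \eta_n$; Theorem~\ref{teo:main_no_translation} yields
$$\|f_n\|_\infty \leq C\exp(-c\, 2^{n\alpha}),\qquad \text{supp}\, f_n\subseteq B(0,2^{n+1}),$$
and linearity of $\tau_{\mathbf x}$ gives $q(\mathbf x,\mathbf y)=\sum_n \tau_{\mathbf x}f_n(-\mathbf y)$. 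The heart of the proof is a pointwise estimate of the form
\begin{equation}\label{eq:key_bound}
|\tau_{\mathbf x}f_n(-\mathbf y)|\cleq \frac{\|f_n\|_\infty\, w(B(0,2^{n+1}))}{w(B(\mathbf x,2^{n+1}))}\,\mathbf 1_{\{d(\mathbf x,\mathbf y)\leq C 2^{n+1}\}},
\end{equation}
which combines an $L^1$--$L^\infty$-type estimate for $\tau_{\mathbf x}$ with an orbit-support property for Dunkl translations of compactly supported functions: if $f$ is supported in $B(0,R)$, then $\tau_{\mathbf x}f(-\mathbf y)=0$ (or is negligible) unless $d(\mathbf x,\mathbf y)\cleq R$. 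For radial $f$ this follows from R\"osler's explicit integral representation; the non-radial case reduces to the radial one by decomposing $f_n$ into its $G$-symmetry types and invoking the intertwining operator.

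Granting \eqref{eq:key_bound}, only the indices $n$ with $2^n\gtrsim d(\mathbf x,\mathbf y)$ contribute to the sum. The doubling property of $dw$ gives $w(B(0,2^{n+1}))\cleq 2^{n\mathbf N}$ and $w(B(\mathbf x,2^{n+1}))\geq w(B(\mathbf x,1))$ for $n\geq 0$, so the polynomial factor $2^{n\mathbf N}$ is absorbed into the rapidly decaying $\exp(-c\, 2^{n\alpha})$. The resulting series is dominated by its leading term at the scale $2^n\sim d(\mathbf x,\mathbf y)$, producing
$$|q(\mathbf x,\mathbf y)|\cleq \frac{1}{w(B(\mathbf x,1))}\exp\bigl(-c'\, d(\mathbf x,\mathbf y)^{\alpha}\bigr),$$
and swapping the roles of $\mathbf x$ and $\mathbf y$ yields the bound with $w(B(\mathbf y,1))^{-1}$, whence \eqref{eq:main}.

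The main obstacle is \eqref{eq:key_bound}. Dunkl translation does not literally preserve Euclidean support for non-radial functions, so the orbit-support claim is not immediate; a clean proof must go through R\"osler's integral representation combined with intertwining-operator estimates adapted to the cutoffs $\eta_n$, or alternatively through a refined analysis of the product $E(\mathbf x,i\xi)E(-\mathbf y,i\xi)$ appearing in the inverse Dunkl transform of $\tau_{\mathbf x}f_n(-\mathbf y)$, exploiting the Schwartz bounds on $\widehat{f_n}$ inherited from $\hat q$. This is where essentially all of the technical work sits; the dyadic decomposition and summation step described above is then routine.
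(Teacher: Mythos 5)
Your reduction to the ``key bound'' \eqref{eq:key_bound} is where the argument breaks down, and you acknowledge as much: everything difficult in Theorem~\ref{teo:theo2} is concentrated in that one display, and the proposal does not prove it. The orbit--support half of \eqref{eq:key_bound} is indeed available for compactly supported functions, but the pointwise half --- an estimate of the form $\|\tau_{\mathbf x} f_n\|_{L^\infty}\lesssim \|f_n\|_{L^1(dw)}\, w(B(\mathbf x,2^{n+1}))^{-1}$ for the \emph{non-radial} pieces $f_n=q\,\eta_n$ --- is not a known property of the Dunkl translation. As the paper itself stresses, even $L^p(dw)$-boundedness of $\tau_{\mathbf x}$ for $p\neq 2$ is open, the translation ``kernel'' is a distribution that is not known to be a function with the size $w(B(\mathbf x,r))^{-1}$, and R\"osler's formula \eqref{eq:translation-radial} applies only to radial functions. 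Your suggested repairs (decomposing $f_n$ into $G$-symmetry types and ``invoking the intertwining operator'', or analysing $E(\mathbf x,i\xi)E(-\mathbf y,i\xi)$ directly) are not substantiated and do not obviously close this gap; the intertwining operator and its inverse are precisely the objects one cannot control here. So the dyadic summation at the end, which is indeed routine, rests on an unproven and (in this generality) unavailable estimate.

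The paper's proof is organized exactly so as to avoid ever translating a non-radial function pointwise. It factors $q_1=q_1^{(\varepsilon_0)}*h_{\varepsilon_0/2}*h_{\varepsilon_0/2}$ --- this is the raison d'\^etre of the $\varepsilon$-perturbed semigroup and of Theorem~\ref{teo:main_1} for $\varepsilon=\varepsilon_0$ --- and then writes $q_1(\mathbf x,\mathbf y)$ as an integral of $\tau_{-\mathbf y}(q_1^{(\varepsilon_0)}*h_{\varepsilon_0/2})(\mathbf z)$ against the \emph{radial} heat kernel $h_{\varepsilon_0/2}(\mathbf x,\mathbf z)$. The pointwise factor $w(B(\mathbf x,1))^{-1}\exp(-c\,d(\mathbf x,\mathbf y)^{2\ell/(2\ell-1)})$ comes from the Gaussian bound of Theorem~\ref{teo:heat} on $h_{\varepsilon_0/2}(\mathbf x,\mathbf z)$, after splitting the integral according to whether $d(\mathbf x,\mathbf z)$ or $d(\mathbf y,\mathbf z)$ dominates $d(\mathbf x,\mathbf y)$; the remaining integral is controlled in $L^1$ (not $L^\infty$) by Lemma~\ref{lem:fg_growth}, whose proof uses your dyadic decomposition but only needs Proposition~\ref{propo:compact_supports}, an $L^1$ bound for $\tau_{\mathbf y}(f*\phi)$ valid because one factor $\phi$ is radial and compactly supported. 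If you want to salvage your outline, you must either prove \eqref{eq:key_bound} (which would be a new and strong result about Dunkl translations) or, as the paper does, insert a radial convolution factor before translating and settle for $L^1$ control of the translated non-radial part.
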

\begin{remark}
\normalfont
By a scaling argument applied to~\eqref{eq:main} (see~\eqref{eq:appropriate_scaling} and~\eqref{eq:homo}) we obtain that there are $C,c>0$ such that for all $\mathbf{x},\mathbf{y} \in \mathbb{R}^N$ and $t>0$ we have
\begin{align*}
    |q_{t}(\mathbf{x},\mathbf{y})| \leq C(\max\{w(B(\mathbf{x},t^{1/(2\ell)})),w(B(\mathbf{y},t^{1/(2\ell)})\})^{-1} \exp\left(-c\frac{d(\mathbf x,\mathbf y)^{2\ell\slash (2\ell -1)}}{t^{1\slash (2\ell-1)} }\right). 
\end{align*}
\end{remark}
To prove the first theorem we borrow ideas of~\cite{DHul} and~\cite{DHZ}. We first introduce a~family of weighted $L^2$-spaces with weights of exponential growth and prove that \eqref{eq:action_semigroup}  defines strongly continuous semigroups of linear operators on these spaces. This is done by proving G{\aa}rding inequalities for associated weighted linear forms and applying a theorem of J.-L. Lions (see Theorem~\ref{teo:Lions}).    We expect that if a~convolution operator preserves  weighted $L^2$-spaces with weights of exponential growth and has some smoothness properties, then its convolution  kernel should have some fast decay, and in fact it has. 

Let us note that the function $q(\mathbf x)$ is not radial. Therefore in the proof of Theorem \ref{teo:theo2} we cannot apply the formula of R\"osler (see~\eqref{eq:translation-radial}) for translations of radial functions. 
In order to prove Theorem \ref{teo:theo2} we use  methods developed in~\cite{DzHej2} based on the description of the support the Dunkl translations of compactly supported functions combined with the observation that any sufficiently regular fast decaying function can be written as a convolution of two functions such that one of them is radial (see \cite{DzHej2}).  Let us emphasis difficulties we have to face  when we apply the method of exponential weights. The first one is that the Dunkl operators do not satisfy the Leibniz rule. The second one concerns the lack of knowledge about boundedness of the Dunkl translations on $L^p(dw)$ spaces  and the fact that the translations do not form a group of operators as it is in the case of Lie groups.

{\section{Preliminaries and notation}\label{sec:preliminaries}

The Dunkl theory is a generalization of the Euclidean Fourier analysis. It started with the seminal article \cite{Dunkl} and developed extensively afterwards (see e.g. \cite{RoeslerDeJeu}, \cite{Dunkl0}, \cite{Dunkl3}, \cite{Dunkl2}, \cite{GR}, \cite{Roesler2}, \cite{Roesle99}, \cite{Roesler2003}, \cite{ThangaveluXu}, \cite{Trimeche2002}). 
In this section we present basic facts concerning the theory of the Dunkl operators.  For details we refer the reader to~\cite{Dunkl},~\cite{Roesler3}, and~\cite{Roesler-Voit}. 

We consider the Euclidean space $\mathbb R^N$ with the scalar product $\langle\mathbf x,\mathbf y\rangle=\sum_{j=1}^N x_jy_j
$, $\mathbf x=(x_1,...,x_N)$, $\mathbf y=(y_1,...,y_N)$, and the norm $\| \mathbf x\|^2=\langle \mathbf x,\mathbf x\rangle$. For a nonzero vector $\alpha\in\mathbb R^N$,  the reflection $\sigma_\alpha$ with respect to the hyperplane $\alpha^\perp$ orthogonal to $\alpha$ is given by
\begin{equation}\label{eq:reflection}
\sigma_\alpha (\mathbf x)=\mathbf x-2\frac{\langle \mathbf x,\alpha\rangle}{\| \alpha\| ^2}\alpha.
\end{equation}
In this paper we fix a normalized root system in $\mathbb R^N$, that is, a finite set  $R\subset \mathbb R^N\setminus\{0\}$ such that   $\sigma_\alpha (R)=R$ and $\|\alpha\|=\sqrt{2}$ for every $\alpha\in R$. The finite group $G$ generated by the reflections $\sigma_\alpha \in R$ is called the {\it Weyl group} ({\it reflection group}) of the root system. A~{\textit{multiplicity function}} is a $G$-invariant function $k:R\to\mathbb C$ which will be fixed and $\geq 0$  throughout this paper.

 Let
\begin{equation}\label{eq:measure}
dw(\mathbf x)=\prod_{\alpha\in R}|\langle \mathbf x,\alpha\rangle|^{k(\alpha)}\, d\mathbf x
\end{equation} 
be  the associated measure in $\mathbb R^N$, where, here and subsequently, $d\mathbf x$ stands for the Lebesgue measure in $\mathbb R^N$.
We denote by $\mathbf N=N+\sum_{\alpha \in R} k(\alpha)$ the homogeneous dimension of the system. Clearly, 
\begin{equation}\label{eq:homo} w(B(t\mathbf x, tr))=t^{\mathbf N}w(B(\mathbf x,r)) \ \ \text{\rm for all } \mathbf x\in\mathbb R^N, \ t,r>0 
\end{equation}
and
\begin{equation}\label{eq:change_var}
\int_{\mathbb R^N} f(\mathbf x)\, dw(\mathbf x)=\int_{\mathbb R^N} t^{-\mathbf N} f(\mathbf x\slash t)\, dw(\mathbf x)\ \ \text{for} \ f\in L^1(dw)  \   \text{\rm and} \  t>0.
\end{equation}
Observe that (\footnote{The symbol $\sim$ between two positive expressions means that their ratio remains between two positive constants.})
\begin{equation}\label{eq:behavior} w(B(\mathbf x,r))\sim r^{N}\prod_{\alpha \in R} (|\langle \mathbf x,\alpha\rangle |+r)^{k(\alpha)},
\end{equation}
so $dw(\mathbf x)$ is doubling, that is, there is a constant $C>0$ such that
\begin{equation}\label{eq:doubling} w(B(\mathbf x,2r))\leq C w(B(\mathbf x,r)) \ \ \text{ for all } \mathbf x\in\mathbb R^N, \ r>0.
\end{equation}

For $\xi \in \mathbb{R}^N$, the {\it Dunkl operators} $T_\xi$  are the following $k$-deformations of the directional derivatives $\partial_\xi$ by a  difference operator:
\begin{equation}\label{eq:T_j}
     T_\xi f(\mathbf x)= \partial_\xi f(\mathbf x) + \sum_{\alpha\in R} \frac{k(\alpha)}{2}\langle\alpha ,\xi\rangle\frac{f(\mathbf x)-f(\sigma_\alpha \mathbf x)}{\langle \alpha,\mathbf x\rangle}.
\end{equation}
The Dunkl operators $T_{\xi}$, which were introduced in~\cite{Dunkl}, commute and are skew-symmetric with respect to the $G$-invariant measure $dw$.
 Suppose that $\xi\ne 0$, $f,g \in C^1(\mathbb{R}^N)$ and $g$ is radial. The following Leibniz rule can be confirmed by a direct calculation:
\begin{equation}\label{eq:Leibniz}
T_{\xi}(f g)=f(T_\xi g)+g(T_\xi f). 
\end{equation}
For fixed $\mathbf y\in\mathbb R^N$ the {\it Dunkl kernel} $E(\mathbf x,\mathbf y)$ is the unique analytic solution to the system
$$ T_\xi f=\langle \xi,\mathbf y\rangle f, \ \ f(0)=1.$$
The function $E(\mathbf x ,\mathbf y)$, which generalizes the exponential  function $e^{\langle \mathbf x,\mathbf y\rangle}$, has the unique extension to a holomorphic function on $\mathbb C^N\times \mathbb C^N$. Let $\{e_j\}_{1 \leq j \leq N}$ denote the canonical orthonormal basis in $\mathbb R^N$ and let $T_j=T_{e_j}$. For multi-index $\beta=(\beta_1,\beta_2,\ldots,\beta_N)  \in\mathbb N_0^N$, we set 
$$ |\beta|=\beta_1+\beta_2 +\ldots +\beta_N,$$
$$\partial^{\beta}=\partial_1^{\beta_1} \circ \partial_2^{\beta_2}\circ \ldots \circ \partial_N^{\beta_N},$$
$$ T^\beta = T_1^{\beta_1}\circ T_2^{\beta_2}\circ \ldots \circ T_N^{\beta_N}.$$
In our further consideration we shall need the following lemma.
\begin{lema}\label{lem:Roesler_Dunkl_kernel}
For all $\mathbf{x} \in \mathbb{R}^N$, $\mathbf{z} \in \mathbb{C}^N$ and $\nu \in \mathbb{N}_0^{N}$ we have
$$|\partial^{\nu}_{\mathbf{z}}E(\mathbf{x},\mathbf{z})| \leq \|\mathbf{x}\|^{|\nu|}\exp(\|\mathbf{x}\|\|{\rm Re \;}\mathbf{z}\|).$$
In particular, 
\begin{equation} \label{eq:E} | E(i\xi, \mathbf x)|\leq 1 \quad \text{ for all } \xi,\mathbf x\in \mathbb R^N.
\end{equation}
\end{lema}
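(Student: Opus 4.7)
The plan is to derive the bound from R\"osler's integral representation of the Dunkl kernel, which is the standard tool for such pointwise estimates. For every $\mathbf{x}\in\mathbb{R}^N$ there exists a probability measure $\mu_{\mathbf{x}}$ on $\mathbb{R}^N$, supported in the convex hull of the orbit $G\cdot\mathbf{x}$, such that
$$E(\mathbf{x},\mathbf{z})=\int_{\mathbb{R}^N}e^{\langle \mathbf{y},\mathbf{z}\rangle}\,d\mu_{\mathbf{x}}(\mathbf{y})\qquad\text{for all }\mathbf{z}\in\mathbb{C}^N.$$
Since $G$ acts by orthogonal transformations, every point of $G\cdot\mathbf{x}$ has norm $\|\mathbf{x}\|$, so its convex hull lies inside the closed ball $B(0,\|\mathbf{x}\|)$, and therefore $\supp \mu_{\mathbf{x}}\subseteq B(0,\|\mathbf{x}\|)$.

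Because $\mu_{\mathbf{x}}$ has compact support and the integrand is entire in $\mathbf{z}$ with derivatives bounded uniformly in $\mathbf{y}$ on compacta, I would differentiate under the integral sign to obtain
$$\partial^{\nu}_{\mathbf{z}}E(\mathbf{x},\mathbf{z})=\int_{\mathbb{R}^N}\mathbf{y}^{\nu}e^{\langle \mathbf{y},\mathbf{z}\rangle}\,d\mu_{\mathbf{x}}(\mathbf{y}),$$
where $\mathbf{y}^{\nu}=y_1^{\nu_1}\cdots y_N^{\nu_N}$. On the support of $\mu_{\mathbf{x}}$ one has $|\mathbf{y}^{\nu}|\leq \|\mathbf{y}\|^{|\nu|}\leq \|\mathbf{x}\|^{|\nu|}$, while
$$|e^{\langle \mathbf{y},\mathbf{z}\rangle}|=e^{\langle \mathbf{y},{\rm Re \;}\mathbf{z}\rangle}\leq \exp(\|\mathbf{y}\|\|{\rm Re \;}\mathbf{z}\|)\leq \exp(\|\mathbf{x}\|\|{\rm Re \;}\mathbf{z}\|)$$
by the Cauchy--Schwarz inequality. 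Since $\mu_{\mathbf{x}}$ is a probability measure, integrating these two estimates yields the claimed bound.

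The inequality \eqref{eq:E} then follows at once by choosing $\nu=0$ and interchanging the two arguments via the symmetry $E(\mathbf{x},\mathbf{z})=E(\mathbf{z},\mathbf{x})$, so that $|E(i\xi,\mathbf{x})|=|E(\mathbf{x},i\xi)|\leq \exp(\|\mathbf{x}\|\cdot 0)=1$ since ${\rm Re}(i\xi)=0$ for $\xi\in\mathbb{R}^N$. There is no substantive obstacle here: the whole argument rests on R\"osler's representation theorem, and once it is invoked the remaining steps are elementary. If one wanted to avoid that representation, one could instead expand $E$ in its defining power series and estimate each term by means of the intertwining operator bounded on $G$-orbits, but that route is considerably more delicate and ultimately recovers the same estimate.
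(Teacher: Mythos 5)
Your argument is correct and is precisely the proof of the result the paper invokes by citation (R\"osler's Corollary 5.3 in \emph{Positivity of Dunkl's intertwining operator}): the positive integral representation $E(\mathbf{x},\mathbf{z})=\int e^{\langle \mathbf{y},\mathbf{z}\rangle}\,d\mu_{\mathbf{x}}(\mathbf{y})$ with $\mu_{\mathbf{x}}$ a probability measure supported in $\operatorname{conv}\mathcal{O}(\mathbf{x})\subseteq B(0,\|\mathbf{x}\|)$, followed by differentiation under the integral and the elementary pointwise bounds. Since the paper gives no independent argument but simply refers to that corollary, your reconstruction takes essentially the same route, and the deduction of \eqref{eq:E} via $\nu=0$, the symmetry of $E$, and ${\rm Re}(i\xi)=0$ is sound.
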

\begin{proof}
See~\cite[Corollary 5.3]{Roesle99}.
\end{proof}

The Dunkl transform
  \begin{equation}\label{eq:DunklTransform}\mathcal F f(\xi)=c_k^{-1}\int_{\mathbb R^N} E(-i\xi, \mathbf x)f(\mathbf x)\, dw(\mathbf x),
  \end{equation}
  where
  $$c_k=\int_{\mathbb{R}^N}e^{-\frac{\|\mathbf{x}\|^2}{2}}\,dw(\mathbf{x})>0,$$
   originally defined for $f\in L^1(dw)$, is an isometry on $L^2(dw)$, i.e.,
   \begin{equation}\label{eq:Plancherel}
       \|f\|_{L^2(dw)}=\|\mathcal{F}f\|_{L^2(dw)} \text{ for all }f \in L^2(dw),
   \end{equation} and preserves the Schwartz class of functions $\mathcal S(\mathbb R^N)$ (see \cite{deJeu}). Its inverse $\mathcal F^{-1}$ has the form
  \begin{equation}\label{eq:inverse} \mathcal F^{-1} g(x)=c_k^{-1}\int_{\mathbb R^N} E(i\xi, \mathbf x)g(\xi)\, dw(\xi).
  \end{equation}
  Obviously, for all $f \in \mathcal{S}(\mathbb{R}^N)$,  we have 
  \begin{equation}\label{eq:transform_deriv} 
       \mathcal F(T_{\zeta}f)(\xi)= - i \langle \zeta, \xi\rangle \mathcal Ff(\xi)  \text{ for all } \xi,\zeta \in \mathbb{R}^N,
  \end{equation}
  and, consequently,
  \begin{equation}\label{eq:transform_L}  \mathcal F(Lf)(\xi)=-\Big(\sum_{j=1}^m \langle \zeta_j, \xi\rangle^{2\ell}\Big)\mathcal Ff(\xi) \text{ for all }\xi \in \mathbb{R}^N.
  \end{equation}
The Dunkl transform $\mathcal F$ is an analogue of the classical Fourier transform.

The {\it Dunkl translation\/} $\tau_{\mathbf{x}}f$ of a function $f\in\mathcal{S}(\mathbb{R}^N)$ by $\mathbf{x}\in\mathbb{R}^N$ is defined by
\begin{equation}\label{eq:translation}
\tau_{\mathbf{x}} f(\mathbf{y})=c_k^{-1} \int_{\mathbb{R}^N}{E}(i\xi,\mathbf{x})\,{E}(i\xi,\mathbf{y})\,\mathcal{F}f(\xi)\,{dw}(\xi).
\end{equation}
  It is a contraction on $L^2(dw)$, however it is an open  problem  if the Dunkl translations are bounded operators on $L^p(dw)$ for $p\ne 2$.
  
  The following specific formula was obtained by R\"osler \cite{Roesler2003} for the Dunkl translations of (reasonable) radial functions $f({\mathbf{x}})=\tilde{f}({\|\mathbf{x}\|})$:
\begin{equation}\label{eq:translation-radial}
\tau_{\mathbf{x}}f(-\mathbf{y})=\int_{\mathbb{R}^N}{(\tilde{f}\circ A)}(\mathbf{x},\mathbf{y},\eta)\,d\mu_{\mathbf{x}}(\eta)\text{ for all }\mathbf{x},\mathbf{y}\in\mathbb{R}^N.
\end{equation}
Here
\begin{equation*}
A(\mathbf{x},\mathbf{y},\eta)=\sqrt{{\|}\mathbf{x}{\|}^2+{\|}\mathbf{y}{\|}^2-2\langle \mathbf{y},\eta\rangle}=\sqrt{{\|}\mathbf{x}{\|}^2-{\|}\eta{\|}^2+{\|}\mathbf{y}-\eta{\|}^2}
\end{equation*}
and $\mu_{\mathbf{x}}$ is a probability measure, 
which is supported in the set $\operatorname{conv}\mathcal{O}(\mathbf{x})$,  where $\mathcal O(\mathbf x) =\{\sigma(\mathbf x): \sigma \in G\}$ is the orbit of $\mathbf x$. Formula~\eqref{eq:translation-radial} implies that for all radial $f \in L^1(dw)$ and $\mathbf{x} \in \mathbb{R}^N$ we have
\begin{equation}\label{eq:translation-bounded}
    \|\tau_{\mathbf{x}}f(\mathbf{y})\|_{L^1(dw(\mathbf{y}))} \leq \|f(\mathbf{y})\|_{L^1(dw(\mathbf{y}))}.
\end{equation}

  {The \textit{Dunkl convolution\/} $f*g$ of two reasonable functions (for instance Schwartz functions) is defined by
$$
(f*g)(\mathbf{x})=c_k\,\mathcal{F}^{-1}[(\mathcal{F}f)(\mathcal{F}g)](\mathbf{x})=\int_{\mathbb{R}^N}(\mathcal{F}f)(\xi)\,(\mathcal{F}g)(\xi)\,E(\mathbf{x},i\xi)\,dw(\xi) \text{ for }\mathbf{x}\in\mathbb{R}^N
$$
or, equivalently, by}
\begin{align*}
  {(f{*}g)(\mathbf{x})=\int_{\mathbb{R}^N}f(\mathbf{y})\,\tau_{\mathbf{x}}g(-\mathbf{y})\,{dw}(\mathbf{y})=\int_{\mathbb R^N} f(\mathbf y)g(\mathbf x,\mathbf y) \,dw(\mathbf{y}) \text{ for all } \mathbf{x}\in\mathbb{R}^N},  
\end{align*}
where, here and subsequently, $g(\mathbf x,\mathbf y)=\tau_{\mathbf x}g(-\mathbf y)$. 
  
The {\it Dunkl Laplacian} associated with $R$ and $k$  is the differential-difference operator $\Delta=\sum_{j=1}^N T_{j}^2$, which  acts on $C^2(\mathbb{R}^N)$-functions by

\begin{equation}\label{eq:laplace_formula}
    \Delta f(\mathbf x)=\Delta_{\rm eucl} f(\mathbf x)+\sum_{\alpha\in R} k(\alpha) \delta_\alpha f(\mathbf x),
\end{equation}
$$\delta_\alpha f(\mathbf x)=\frac{\partial_\alpha f(\mathbf x)}{\langle \alpha , \mathbf x\rangle} - \frac{\|\alpha\|^2}{2} \frac{f(\mathbf x)-f(\sigma_\alpha \mathbf x)}{\langle \alpha, \mathbf x\rangle^2}.$$
Obviously, $\mathcal F(\Delta f)(\xi)=-\| \xi\|^2\mathcal Ff(\xi)$. The operator $\Delta$ is essentially self-adjoint on $L^2(dw)$ (see for instance \cite[Theorem\;3.1]{AH}) and generates the semigroup $e^{t\Delta}$  of linear self-adjoint contractions on $L^2(dw)$. The semigroup has the form
  \begin{equation}\label{eq:heat_semigroup}
  e^{t\Delta} f(\mathbf x)=\mathcal F^{-1}(e^{-t\|\xi\|^2}\mathcal Ff(\xi))(\mathbf x)=\int_{\mathbb R^N} h_t(\mathbf x,\mathbf y)f(\mathbf y)\, dw(\mathbf y),
  \end{equation}
  where the heat kernel 
  \begin{equation}\label{eq:heat_kernel}
      h_t(\mathbf x,\mathbf y)=\tau_{\mathbf x}h_t(-\mathbf y), \ \ h_t(\mathbf x)=\mathcal F^{-1} (e^{-t\|\xi\|^2})(\mathbf x)=c_k^{-1} (2t)^{-\mathbf N\slash 2}e^{-\| \mathbf x\|^2\slash (4t)},
  \end{equation}
  is a $C^\infty$-function of all variables $\mathbf x,\mathbf y \in \mathbb{R}^N$, $t>0$ and satisfies \begin{equation}\label{eq:symmetry} 0<h_t(\mathbf x,\mathbf y)=h_t(\mathbf y,\mathbf x),
  \end{equation}
 \begin{equation}\label{eq:heat_one} \int_{\mathbb R^N} h_t(\mathbf x,\mathbf y)\, dw(\mathbf y)=1.
 \end{equation}
  Set
$$V(\mathbf x,\mathbf y,t)=\max (w(B(\mathbf x,t)),w(B(\mathbf y, t))).$$ 
The following theorem was proved in~\cite[Theorem 4.1]{ADH}.
\begin{teo}\label{teo:heat}
There are constants $C,c>0$ such that for all $\mathbf{x},\mathbf{y} \in \mathbb{R}^N$ and $t>0$ we have
\begin{equation}\label{eq:Gauss}
h_t(\mathbf{x},\mathbf{y}) \leq C\,V(\mathbf{x},\mathbf{y},\!\sqrt{t\,})^{-1}\,e^{-\hspace{.25mm}c\hspace{.5mm}d(\mathbf{x},\mathbf{y})^2\slash t}.
\end{equation}
\end{teo}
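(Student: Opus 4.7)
The plan is to combine R\"osler's formula \eqref{eq:translation-radial} with a Cauchy--Schwarz/semigroup reduction to on-diagonal bounds. Since $h_t(\mathbf{x})=c_k^{-1}(2t)^{-\mathbf{N}/2}e^{-\|\mathbf{x}\|^2/(4t)}$ is radial, formula \eqref{eq:translation-radial} applies and gives
\[
h_t(\mathbf{x},\mathbf{y})=c_k^{-1}(2t)^{-\mathbf{N}/2}\int_{\mathbb{R}^N}\exp\!\Bigl(-\tfrac{A(\mathbf{x},\mathbf{y},\eta)^2}{4t}\Bigr)\,d\mu_{\mathbf{x}}(\eta).
\]
The first step is to extract the exponential decay in $d(\mathbf{x},\mathbf{y})$. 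Since $\mu_{\mathbf{x}}$ is supported on $\operatorname{conv}\mathcal{O}(\mathbf{x})$, any $\eta$ in its support is a convex combination $\eta=\sum_{\sigma\in G}\lambda_{\sigma}\sigma(\mathbf{x})$, hence
\[
\langle\mathbf{y},\eta\rangle\;\leq\;\max_{\sigma\in G}\langle\mathbf{y},\sigma(\mathbf{x})\rangle\;=\;\max_{\sigma\in G}\langle\sigma(\mathbf{y}),\mathbf{x}\rangle,
\]
which yields $A(\mathbf{x},\mathbf{y},\eta)^2=\|\mathbf{x}\|^2+\|\mathbf{y}\|^2-2\langle\mathbf{y},\eta\rangle\geq d(\mathbf{x},\mathbf{y})^2$.

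Next I would split $e^{-A^2/(4t)}=e^{-A^2/(8t)}\cdot e^{-A^2/(8t)}$ and estimate the first factor by $e^{-d(\mathbf{x},\mathbf{y})^2/(8t)}$. Rescaling the remaining integral and re-applying R\"osler's formula backwards (identifying the Gaussian kernel for parameter $2t$), one obtains
\[
h_t(\mathbf{x},\mathbf{y})\;\leq\;C\,e^{-d(\mathbf{x},\mathbf{y})^2/(8t)}\,h_{2t}(\mathbf{x},\mathbf{y}).
\]
It therefore suffices to prove the sharpened on-diagonal bound
\begin{equation*}
h_{t}(\mathbf{x},\mathbf{y})\;\leq\;C\,V(\mathbf{x},\mathbf{y},\sqrt{t})^{-1}\qquad\text{for all }\mathbf{x},\mathbf{y}\in\mathbb{R}^N,\ t>0.
\end{equation*}
By the semigroup property, symmetry \eqref{eq:symmetry}, and Cauchy--Schwarz,
\[
h_{2t}(\mathbf{x},\mathbf{y})=\int h_t(\mathbf{x},\mathbf{z})h_t(\mathbf{y},\mathbf{z})\,dw(\mathbf{z})\;\leq\;h_{2t}(\mathbf{x},\mathbf{x})^{1/2}h_{2t}(\mathbf{y},\mathbf{y})^{1/2},
\]
so the issue reduces to the genuine on-diagonal estimate $h_t(\mathbf{x},\mathbf{x})\leq Cw(B(\mathbf{x},\sqrt{t}))^{-1}$.

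This on-diagonal bound is what I expect to be the main obstacle. A crude use of Plancherel only gives $h_t(\mathbf{x},\mathbf{x})\leq Ct^{-\mathbf{N}/2}$, which by \eqref{eq:behavior} is sharper than $w(B(\mathbf{x},\sqrt{t}))^{-1}$ only when $\mathbf{x}$ sits on every reflection hyperplane. To get the correct dependence on $\mathbf{x}$ one must exploit the behaviour of $w$ recorded in \eqref{eq:behavior}: write
\[
h_t(\mathbf{x},\mathbf{x})=c_k^{-1}(2t)^{-\mathbf{N}/2}\int \exp\!\Bigl(-\tfrac{\|\mathbf{x}\|^2-\|\eta\|^2+\|\mathbf{x}-\eta\|^2}{4t}\Bigr)\,d\mu_{\mathbf{x}}(\eta),
\]
and, using that for $\eta\in\operatorname{conv}\mathcal{O}(\mathbf{x})$ one has $\|\eta\|\leq\|\mathbf{x}\|$ together with the fact that the supports of $\mu_{\mathbf{x}}$ cluster near the $G$-orbit of $\mathbf{x}$, one controls the integrand in terms of the distances $|\langle\mathbf{x},\alpha\rangle|$ to the hyperplanes. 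Combined with the factorisation \eqref{eq:behavior} of $w(B(\mathbf{x},\sqrt{t}))$, one obtains the desired $w(B(\mathbf{x},\sqrt{t}))^{-1}$ control. Putting the pieces together gives \eqref{eq:Gauss}, possibly after adjusting the constant $c$ and using doubling \eqref{eq:doubling} to switch between $\sqrt{t}$ and $2\sqrt{t}$.
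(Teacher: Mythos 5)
First, note that the paper does not actually prove Theorem~\ref{teo:heat}: it is quoted verbatim from \cite[Theorem 4.1]{ADH}, so you are attempting to reprove a cited result rather than matching an argument in the text. Your reduction steps are sound as far as they go: the inequality $A(\mathbf{x},\mathbf{y},\eta)^2\geq d(\mathbf{x},\mathbf{y})^2$ on $\operatorname{supp}\mu_{\mathbf{x}}$ is correct, and splitting the Gaussian and re-reading the remaining integral via \eqref{eq:translation-radial} does give $h_t(\mathbf{x},\mathbf{y})\leq 2^{\mathbf{N}/2}e^{-d(\mathbf{x},\mathbf{y})^2/(8t)}h_{2t}(\mathbf{x},\mathbf{y})$. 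Still, two problems remain. The smaller one: Cauchy--Schwarz yields the geometric mean $h_{2t}(\mathbf{x},\mathbf{x})^{1/2}h_{2t}(\mathbf{y},\mathbf{y})^{1/2}$, hence (granting the on-diagonal bound) $\bigl(w(B(\mathbf{x},\sqrt{t}))\,w(B(\mathbf{y},\sqrt{t}))\bigr)^{-1/2}$, which is in general \emph{larger} than $V(\mathbf{x},\mathbf{y},\sqrt{t})^{-1}=\min\{w(B(\mathbf{x},\sqrt{t}))^{-1},w(B(\mathbf{y},\sqrt{t}))^{-1}\}$. You still need to convert the geometric mean into the max, e.g.\ by the $G$-invariance of $w$ and \eqref{eq:behavior}, which give $w(B(\mathbf{y},\sqrt{t}))\leq C(1+d(\mathbf{x},\mathbf{y})/\sqrt{t})^{\mathbf{N}}w(B(\mathbf{x},\sqrt{t}))$, and then absorb the polynomial factor into the Gaussian; this is routine but absent from your write-up.

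The serious gap is the on-diagonal estimate $h_t(\mathbf{x},\mathbf{x})\leq Cw(B(\mathbf{x},\sqrt{t}))^{-1}$, which you correctly identify as the main obstacle but do not prove. The only properties of $\mu_{\mathbf{x}}$ you invoke --- support in $\operatorname{conv}\mathcal{O}(\mathbf{x})$ and $\|\eta\|\leq\|\mathbf{x}\|$ --- yield nothing beyond $A(\mathbf{x},\mathbf{x},\eta)\geq 0$, i.e.\ exactly the bound $h_t(\mathbf{x},\mathbf{x})\leq Ct^{-\mathbf{N}/2}$ that you already dismissed as insufficient. To gain the extra factor $\prod_{\alpha\in R}\bigl(\sqrt{t}/(|\langle\mathbf{x},\alpha\rangle|+\sqrt{t})\bigr)^{k(\alpha)}$ one needs \emph{quantitative} control of how the mass of $\mu_{\mathbf{x}}$ is distributed over $\operatorname{conv}\mathcal{O}(\mathbf{x})$ (equivalently, upper bounds for $\mu_{\mathbf{x}}$ of small balls, or for translates $\tau_{\mathbf{x}}\chi_{B(0,r)}$), and no such estimate is stated in Section~\ref{sec:preliminaries} or derivable from the facts recalled there. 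The assertion that ``the supports of $\mu_{\mathbf{x}}$ cluster near the $G$-orbit of $\mathbf{x}$'' is not a lemma you have; making it precise and exploiting it is exactly the content and the difficulty of \cite[Theorem 4.1]{ADH}, so the proposal does not constitute a proof.
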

 }

\section{Weighted Hilbert spaces and bilinear forms}
\subsection{Definition and properties of exponential weight functions}
For any $s>0$ and $\mathbf{x} \in \mathbb{R}^N$ let us define 
\begin{equation}\label{eq:eta}
\eta (\mathbf{x})=\exp(\sqrt{1+\|\mathbf{x}\|^2}), \ \ \eta(\mathbf x,s)=\exp( \sqrt{1+\|s\mathbf{x}\|^2}).
\end{equation}
Clearly, 
\begin{equation}\label{eq:eta_comp}
    e^{s\| x\|}\leq \eta(\mathbf x,s)\leq e^{s\| x\|+1}.
\end{equation}
\begin{lema}\label{lem:exp_derivatives}
For every $\beta \in \mathbb{N}_0^{N}$ there is a constant $C_\beta>0$ such that for all $\mathbf{x} \in \mathbb{R}^N$ and $s > 0$ we have
\begin{equation}\label{eq:exp_derivatives}
|\partial_{\mathbf{x}}^{\beta}\eta (\mathbf{x},s)| \leq C_{\beta}s^{|\beta|}\eta (\mathbf{x},s),
\end{equation}
where, here and subsequently, $\partial_{\mathbf{x}}^{\beta}$ denotes the partial derivative with respect to the variable $\mathbf{x}$.
\end{lema}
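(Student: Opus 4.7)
The plan is to reduce to the case $s=1$ by scaling and then prove the required bound by induction. Define $\phi(\mathbf y)=\sqrt{1+\|\mathbf y\|^2}$ and $\tilde\eta(\mathbf y)=e^{\phi(\mathbf y)}$, so that $\eta(\mathbf x,s)=\tilde\eta(s\mathbf x)$. The chain rule gives
\begin{equation*}
\partial_{\mathbf x}^{\beta}\eta(\mathbf x,s)=s^{|\beta|}(\partial^{\beta}\tilde\eta)(s\mathbf x),
\end{equation*}
so the lemma reduces to showing $|\partial^{\beta}\tilde\eta(\mathbf y)|\leq C_\beta\tilde\eta(\mathbf y)$ uniformly in $\mathbf y\in\mathbb R^N$.

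The key intermediate claim is that for every multi-index $\gamma$ with $|\gamma|\geq 1$, the derivative $\partial^{\gamma}\phi$ is bounded on $\mathbb R^N$. I would prove this by induction on $|\gamma|$, showing simultaneously that $\partial^{\gamma}\phi(\mathbf y)$ is a finite sum of terms of the form $c_\alpha\,\mathbf y^{\alpha}\phi(\mathbf y)^{1-|\gamma|-|\alpha|}$ with $|\alpha|\leq |\gamma|$; since $|y_j|\leq \phi(\mathbf y)$ and $\phi(\mathbf y)\geq 1$, every such term is bounded by a constant. (This is quick from $2\phi\,\partial_j\phi=2y_j$ followed by Leibniz-type differentiation.)

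Granting this claim, I would apply Faà di Bruno (or a direct induction on $|\beta|$ using $\partial_j(e^{\phi}F)=e^{\phi}(F\,\partial_j\phi+\partial_jF)$) to obtain
\begin{equation*}
\partial^{\beta}\tilde\eta(\mathbf y)=P_\beta(\mathbf y)\,e^{\phi(\mathbf y)},
\end{equation*}
where $P_\beta$ is a polynomial in the partial derivatives $\partial^{\gamma}\phi$, $1\leq|\gamma|\leq|\beta|$. By the intermediate claim, $P_\beta$ is bounded on $\mathbb R^N$ by some $C_\beta$, which yields $|\partial^{\beta}\tilde\eta(\mathbf y)|\leq C_\beta\tilde\eta(\mathbf y)$. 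Combining with the scaling identity gives the stated inequality $|\partial_{\mathbf x}^{\beta}\eta(\mathbf x,s)|\leq C_\beta s^{|\beta|}\eta(\mathbf x,s)$.

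The only real work is the inductive bookkeeping in the claim that all derivatives of $\phi$ of order $\geq 1$ are bounded; once that is in place, the rest is a formal application of the chain and Leibniz rules. I do not expect any genuine obstacle here, since the structure of $\phi$ makes the gains in powers of $\phi^{-1}$ at each differentiation exactly compensate the polynomial growth of numerators.
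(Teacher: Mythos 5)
Your argument is correct and complete: the scaling identity $\partial_{\mathbf x}^{\beta}\eta(\mathbf x,s)=s^{|\beta|}(\partial^{\beta}\tilde\eta)(s\mathbf x)$ reduces everything to the case $s=1$, and your inductive claim that $\partial^{\gamma}\phi$ is a sum of terms $c_\alpha\,\mathbf y^{\alpha}\phi^{1-|\gamma|-|\alpha|}$ with $|\alpha|\leq|\gamma|$ does close under differentiation and yields boundedness of all derivatives of $\phi$ of order $\geq 1$, hence of $P_\beta$. The paper dismisses this lemma with ``The proof is straightforward,'' and your write-up is exactly the standard direct verification the authors had in mind.
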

\begin{proof}
The proof is straightforward.
\end{proof}
\begin{lema}\label{lem:difference_part}
Suppose that $\phi:\mathbb{R}^N \times (0,\infty)\to \mathbb{R}$ is a $C^{\infty}(\mathbb{R}^N)$-function such that for any $\beta \in \mathbb{N}_{0}^{N}$ there is $C_{\beta}>0$ such that
\begin{equation}\label{eq:phi_assumption}
|\partial_{\mathbf{x}}^{\beta}\phi(\mathbf{x},s)| \leq C_{\beta}s^{|\beta|}\eta(\mathbf{x},s)  \text{ for all }\mathbf{x}\in \mathbb{R}^N\text{ and }s>1/4.
\end{equation} 
Then for every $\zeta \neq 0$  and every $\alpha\in R$ the functions  $$s^{-1}T_{\zeta} \phi(\mathbf{x},s) \ \ \text{\rm and } \ \ \psi_{\alpha}(\mathbf{x},s)=s^{-1}\frac{\phi(\mathbf{x},s)-\phi(\sigma_{\alpha}(\mathbf{x}),s)}{\langle \mathbf{x},\alpha \rangle}$$   satisfy~\eqref{eq:phi_assumption}.
\end{lema}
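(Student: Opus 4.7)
My plan is to reduce both assertions to a single integral representation and then apply the hypothesis together with a simple geometric observation. Unwinding the definition of the Dunkl operator,
$$s^{-1}T_{\zeta}\phi(\mathbf{x},s)=s^{-1}\partial_{\zeta}\phi(\mathbf{x},s)+\sum_{\alpha\in R}\frac{k(\alpha)\langle\alpha,\zeta\rangle}{2}\,\psi_{\alpha}(\mathbf{x},s).$$
The first summand trivially satisfies \eqref{eq:phi_assumption}: applying the hypothesis to the fixed first-order derivative $\partial_{\zeta}\phi$ gives $|\partial_{\mathbf{x}}^{\beta}\partial_{\zeta}\phi(\mathbf{x},s)|\leq C s^{|\beta|+1}\eta(\mathbf{x},s)$, and the factor $s^{-1}$ absorbs one power of $s$. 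Hence the claim for $s^{-1}T_{\zeta}\phi$ will follow from the claim for each $\psi_{\alpha}$.

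For $\psi_\alpha$, I would use the normalization $\|\alpha\|^{2}=2$ to rewrite $\sigma_{\alpha}(\mathbf{x})=\mathbf{x}-\langle\mathbf{x},\alpha\rangle\alpha$, and then invoke the fundamental theorem of calculus along the segment joining $\sigma_{\alpha}(\mathbf{x})$ to $\mathbf{x}$:
$$\phi(\mathbf{x},s)-\phi(\sigma_{\alpha}(\mathbf{x}),s)=\langle\mathbf{x},\alpha\rangle\int_{0}^{1}(\partial_{\alpha}\phi)(\mathbf{y}_{t},s)\,dt,\qquad \mathbf{y}_{t}:=\mathbf{x}-t\langle\mathbf{x},\alpha\rangle\alpha.$$
This cancels the singular denominator and produces the smooth representation
$$\psi_{\alpha}(\mathbf{x},s)=s^{-1}\int_{0}^{1}(\partial_{\alpha}\phi)(\mathbf{y}_{t},s)\,dt.$$

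I would then differentiate under the integral. The map $\mathbf{x}\mapsto\mathbf{y}_{t}$ is affine with Jacobian $I-t\,\alpha\otimes\alpha$, whose entries are uniformly bounded in $t\in[0,1]$; hence the chain rule yields
$$\partial_{\mathbf{x}}^{\beta}(\partial_{\alpha}\phi)(\mathbf{y}_{t},s)=\sum_{|\gamma|=|\beta|+1}P_{\beta,\gamma}(t)(\partial^{\gamma}\phi)(\mathbf{y}_{t},s),$$
with polynomial coefficients $P_{\beta,\gamma}(t)$ of size $O(1)$ on $[0,1]$. Invoking the hypothesis \eqref{eq:phi_assumption} bounds each summand by $C_{\beta}s^{|\beta|+1}\eta(\mathbf{y}_{t},s)$.

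The one non-routine point, and the main observation, is the geometric bound $\|\mathbf{y}_{t}\|\leq\|\mathbf{x}\|$ for $t\in[0,1]$. This is the identity
$$\|\mathbf{y}_{t}\|^{2}=\|\mathbf{x}\|^{2}-2t(1-t)\langle\mathbf{x},\alpha\rangle^{2},$$
using $\|\alpha\|^{2}=2$. Since $\eta(\cdot,s)$ is monotone in $\|\cdot\|$, this yields $\eta(\mathbf{y}_{t},s)\leq\eta(\mathbf{x},s)$ uniformly in $t\in[0,1]$. Integrating in $t$ then gives
$$|\partial_{\mathbf{x}}^{\beta}\psi_{\alpha}(\mathbf{x},s)|\leq C_{\beta}s^{|\beta|}\eta(\mathbf{x},s),$$
which is exactly \eqref{eq:phi_assumption}. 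All the remaining work is routine chain-rule bookkeeping; the hypothesis $s>1/4$ plays no essential role here beyond keeping $s^{-1}$ bounded in case one wants to track uniform constants.
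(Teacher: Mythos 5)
Your proof is correct and follows essentially the same route as the paper's: reduce to $\psi_\alpha$ via the definition of $T_\zeta$, cancel the denominator by the fundamental theorem of calculus along the segment from $\mathbf{x}$ to $\sigma_\alpha(\mathbf{x})$, differentiate under the integral, and invoke the hypothesis. Your explicit verification that $\|\mathbf{y}_t\|\leq\|\mathbf{x}\|$ (hence $\eta(\mathbf{y}_t,s)\leq\eta(\mathbf{x},s)$) is a point the paper's proof leaves implicit, and it is a worthwhile addition.
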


\begin{proof}
Thanks to Lemma~\ref{lem:exp_derivatives} and~\eqref{eq:T_j} it is enough to check the claim for $\psi_{\alpha}$ for all $\alpha \in R$.  Note that
\begin{align*}
\frac{\phi(\mathbf{x},s)-\phi(\sigma_{\alpha}(\mathbf{x}),s)}{s\langle \mathbf{x},\alpha \rangle}
&=-\langle \mathbf{x},\alpha \rangle^{-1}s^{-1}\int_0^1 \frac{d}{dt}\phi\Big(\mathbf{x}-2t\frac{\langle \mathbf{x},\alpha \rangle}{\|\alpha\|^2}\alpha,s\Big)\,dt\\&=c_{\alpha}s^{-1}\int_0^1 \Big\langle \big(\nabla_{\mathbf{x}}\phi \big)\Big(\mathbf{x}-2t\frac{\langle \mathbf{x},\alpha \rangle}{\|\alpha\|^{2}}\alpha,s\Big),\alpha\Big\rangle\,dt,
\end{align*}
therefore
\begin{align*}
\partial^{\beta}\Big\{\frac{\phi(\mathbf{x},s)-\phi(\sigma_{\alpha}(\mathbf{x}),s)}{s\langle \mathbf{x},\alpha \rangle}\Big\}=c_{\alpha}s^{-1}\int_0^1 \Big\langle \partial^{\beta}\Big\{\big(\nabla_{\mathbf{x}}\phi\big)\Big(\mathbf{x}-2t\frac{\langle \mathbf{x},\alpha \rangle}{\|\alpha\|^{2}}\alpha,s\Big)\Big\},\alpha\Big\rangle\,dt,
\end{align*}
so the claim is a consequence of~\eqref{eq:phi_assumption} for $\phi$.
\end{proof}

\begin{lema}\label{lem:derivative_formula}
Suppose that $C^{\infty} \ni \phi:\mathbb{R}^{N}\times (0,\infty)\to \mathbb{R}$ satisfies~\eqref{eq:phi_assumption}. Then for every $\zeta \neq 0$ there are $\phi_{\zeta},\phi_{\alpha,\zeta} \in C^{\infty}(\mathbb{R}^N\times (0,\infty))$, $ \alpha\in R$, which satisfy~\eqref{eq:phi_assumption}, such that for all $f \in C^1(\mathbb{R}^N)$, $\mathbf{x} \in \mathbb{R}^N$, and $s>1/4$ we have
\begin{equation}\begin{split}
T_\zeta & (f(\cdot)\phi(\cdot,s))(\mathbf x)=\phi(\mathbf x,s) T_\zeta f(\mathbf x)+f(\mathbf x)s\phi_{\zeta}(\mathbf x,s)+s\sum_{\alpha \in R} f(\sigma_{\alpha}(\mathbf x))\phi_{\alpha,\zeta}(\mathbf x,s).
\end{split}\end{equation}
\end{lema}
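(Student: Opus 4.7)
The plan is to compute $T_\zeta(f\phi(\cdot,s))(\mathbf{x})$ directly from the definition~\eqref{eq:T_j} and rearrange so that the derivative part combines with the difference part to reproduce $\phi(\mathbf{x},s)\,T_\zeta f(\mathbf{x})$, while the remainder splits into two explicit terms which become $f(\mathbf{x})s\phi_\zeta(\mathbf{x},s)$ and $s\sum_\alpha f(\sigma_\alpha(\mathbf{x}))\phi_{\alpha,\zeta}(\mathbf{x},s)$ after appropriate renormalization by $s^{-1}$.

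Concretely, I would first apply the classical Leibniz rule to the directional-derivative piece, obtaining $\partial_\zeta(f\phi)=\phi\,\partial_\zeta f+f\,\partial_\zeta\phi$. For each $\alpha\in R$, I would then decompose the difference quotient by the identity
\begin{equation*}
f(\mathbf{x})\phi(\mathbf{x},s)-f(\sigma_\alpha\mathbf{x})\phi(\sigma_\alpha\mathbf{x},s)=\phi(\mathbf{x},s)\bigl[f(\mathbf{x})-f(\sigma_\alpha\mathbf{x})\bigr]+f(\sigma_\alpha\mathbf{x})\bigl[\phi(\mathbf{x},s)-\phi(\sigma_\alpha\mathbf{x},s)\bigr].
\end{equation*}
The first summand contributes, after dividing by $\langle\alpha,\mathbf{x}\rangle$ and summing the $k(\alpha)\langle\alpha,\zeta\rangle/2$ coefficients together with $\phi\,\partial_\zeta f$, exactly $\phi(\mathbf{x},s)\,T_\zeta f(\mathbf{x})$. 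The remaining pieces naturally lead to the definitions
\begin{equation*}
\phi_\zeta(\mathbf{x},s):=s^{-1}\partial_\zeta\phi(\mathbf{x},s),\qquad \phi_{\alpha,\zeta}(\mathbf{x},s):=\frac{k(\alpha)\langle\alpha,\zeta\rangle}{2}\cdot s^{-1}\frac{\phi(\mathbf{x},s)-\phi(\sigma_\alpha\mathbf{x},s)}{\langle\alpha,\mathbf{x}\rangle},
\end{equation*}
so that the desired formula is just the identity above, reorganized.

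It remains to verify that $\phi_\zeta$ and $\phi_{\alpha,\zeta}$ satisfy the bound~\eqref{eq:phi_assumption}. For $\phi_\zeta$, since $\partial_\zeta$ is a constant-coefficient first-order operator, $\partial_{\mathbf{x}}^\beta\phi_\zeta=s^{-1}\partial_{\mathbf{x}}^\beta\partial_\zeta\phi$ is a linear combination of $s^{-1}\partial_{\mathbf{x}}^{\beta'}\phi$ with $|\beta'|=|\beta|+1$, so~\eqref{eq:phi_assumption} for $\phi$ yields $|\partial^\beta\phi_\zeta|\lesssim s^{-1}s^{|\beta|+1}\eta(\mathbf{x},s)=s^{|\beta|}\eta(\mathbf{x},s)$. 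For $\phi_{\alpha,\zeta}$, this is exactly the content of Lemma~\ref{lem:difference_part} (up to the constant factor $k(\alpha)\langle\alpha,\zeta\rangle/2$ which is harmless).

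There is no serious obstacle: the lemma is essentially a bookkeeping statement that separates $T_\zeta$ into a $\phi$-covariant part (absorbed into $\phi\,T_\zeta f$) and two remainders of strictly lower order in $s$. The only mild subtlety is keeping track of the $s^{-1}$ normalization so that the error terms indeed gain a factor $s$ relative to $\phi\,T_\zeta f$; this is precisely why the hypothesis~\eqref{eq:phi_assumption} is stated with the $s^{|\beta|}$ scaling and why Lemma~\ref{lem:difference_part} is formulated for $s^{-1}T_\zeta\phi$ and $\psi_\alpha$ rather than for $T_\zeta\phi$ and the bare difference quotient.
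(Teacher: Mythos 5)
Your proposal is correct and follows essentially the same route as the paper: the same splitting of the difference quotient into $\phi(\mathbf{x},s)[f(\mathbf{x})-f(\sigma_\alpha\mathbf{x})]+f(\sigma_\alpha\mathbf{x})[\phi(\mathbf{x},s)-\phi(\sigma_\alpha\mathbf{x},s)]$, the same definitions of $\phi_\zeta$ and $\phi_{\alpha,\zeta}$ with the $s^{-1}$ normalization, and the same appeal to Lemma~\ref{lem:difference_part} for the required bounds.
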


\begin{proof}
By~\eqref{eq:T_j} we have
\begin{equation}\label{eq:derivative_formula_1}
\begin{split}
&T_{\zeta}(f(\cdot)\phi(\cdot,s))(\mathbf{x})=\partial_{{\mathbf{x}\,,}\zeta} (f\phi(\cdot,s))(\mathbf{x})+\sum_{\alpha \in R}\frac{k(\alpha)}{2}\langle \alpha, \zeta \rangle \frac{f(\mathbf{x})\phi(\mathbf{x},s)-f(\sigma_{\alpha}(\mathbf{x}))\phi(\sigma_{\alpha}(\mathbf{x}),s)}{\langle \mathbf{x},\alpha\rangle}\\&=
f(\mathbf{x})\partial_{{\mathbf{x}\,,}\zeta}\phi(\mathbf{x},s)+\phi(\mathbf{x},s)\partial_{{\mathbf{x}\,,}\zeta} f(\mathbf{x})+\sum_{\alpha \in R}\frac{k(\alpha)}{2}\langle \alpha, \zeta\rangle \frac{f(\mathbf{x})\phi(\mathbf{x},s)-f(\sigma_{\alpha}(\mathbf{x}))\phi(\mathbf{x},s)}{\langle \mathbf{x},\alpha\rangle}\\&\quad +\sum_{\alpha \in R}\frac{k(\alpha)}{2}\langle \alpha,\zeta \rangle \frac{f(\sigma_{\alpha}(\mathbf{x}))\phi(\mathbf{x},s)-f(\sigma_{\alpha}(\mathbf{x}))\phi(\sigma_{\alpha}(\mathbf{x}),s)}{\langle \mathbf{x},\alpha\rangle}.\\
\end{split}
\end{equation}
Setting 
$$\phi_{\zeta}(\mathbf x,s)=s^{-1} \partial_{{\mathbf{x}\,,}\zeta} \phi(\mathbf x,s), \ \  \ \phi_{\alpha,\zeta}(\mathbf x,s)=s^{-1}\frac{k(\alpha)}{2}\langle \alpha,\zeta\rangle \frac{\phi(\mathbf x, s)-\phi (\sigma_\alpha(\mathbf x),s)}{\langle \mathbf x,\alpha \rangle}$$
and using  Lemma~\ref{lem:difference_part} we get the claim. 
\end{proof}

{
\begin{lema}\label{lem:derivative_formula_radial}
Suppose that  $C^{\infty} \ni \phi:\mathbb{R}^{N}\times (0,\infty)\to \mathbb{R}$ is a function such that $\phi (\mathbf x,s)=\phi(\mathbf x',s)$ for all $\|\mathbf x\|=\mathbf \|\mathbf{x}'\|$ and it satisfies~\eqref{eq:phi_assumption}. Then for every $\zeta\ne 0$  there is $\phi_\zeta\in C^{\infty}(\mathbb{R}^N\times (0,\infty))$ which satisfies~\eqref{eq:phi_assumption} such that for all $f \in C^1(\mathbb{R}^N)$, $\mathbf{x} \in \mathbb{R}^N$, and $s>1/4$ we have
\begin{equation*}
    T_\zeta(f(\cdot)\phi(\cdot,s))(\mathbf{x})=\phi(\mathbf{x},s)T_\zeta f(\mathbf{x})+s\phi_\zeta (\mathbf{x},s)f(\mathbf{x}).
\end{equation*}
\end{lema}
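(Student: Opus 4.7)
The plan is to apply the radial-function Leibniz rule~\eqref{eq:Leibniz} directly, observing that the difference part of the Dunkl operator vanishes on a radial function, and then to verify that the resulting candidate for $\phi_\zeta$ satisfies the growth condition~\eqref{eq:phi_assumption}.

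\textbf{Step 1: Apply the Leibniz rule.} Since $\phi(\cdot,s)$ is radial as a function of $\mathbf{x}$ for each fixed $s>0$, formula~\eqref{eq:Leibniz} gives, for any $f\in C^1(\mathbb{R}^N)$,
\begin{equation*}
T_\zeta(f(\cdot)\phi(\cdot,s))(\mathbf{x})=\phi(\mathbf{x},s)\,T_\zeta f(\mathbf{x})+f(\mathbf{x})\,T_\zeta \phi(\cdot,s)(\mathbf{x}).
\end{equation*}

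\textbf{Step 2: Simplify $T_\zeta \phi(\cdot,s)$.} Using the definition~\eqref{eq:T_j} of the Dunkl operator,
\begin{equation*}
T_\zeta \phi(\cdot,s)(\mathbf{x})=\partial_{\mathbf{x},\zeta}\phi(\mathbf{x},s)+\sum_{\alpha\in R}\frac{k(\alpha)}{2}\langle \alpha,\zeta\rangle\,\frac{\phi(\mathbf{x},s)-\phi(\sigma_\alpha(\mathbf{x}),s)}{\langle \alpha,\mathbf{x}\rangle}.
\end{equation*}
Because $\|\sigma_\alpha(\mathbf{x})\|=\|\mathbf{x}\|$ and $\phi(\cdot,s)$ depends only on $\|\mathbf{x}\|$, each term in the sum vanishes, and we obtain $T_\zeta\phi(\cdot,s)(\mathbf{x})=\partial_{\mathbf{x},\zeta}\phi(\mathbf{x},s)$.

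\textbf{Step 3: Define $\phi_\zeta$ and verify the bound.} Set
\begin{equation*}
\phi_\zeta(\mathbf{x},s)=s^{-1}\partial_{\mathbf{x},\zeta}\phi(\mathbf{x},s),
\end{equation*}
so that the desired identity
$$T_\zeta(f(\cdot)\phi(\cdot,s))(\mathbf{x})=\phi(\mathbf{x},s)T_\zeta f(\mathbf{x})+s\phi_\zeta(\mathbf{x},s)f(\mathbf{x})$$
holds. To check~\eqref{eq:phi_assumption} for $\phi_\zeta$, note that for any $\beta\in\mathbb{N}_0^N$, the derivative $\partial^\beta\phi_\zeta$ is $s^{-1}$ times a linear combination (with constant coefficients depending only on $\zeta$) of partial derivatives $\partial^{\beta'}\phi$ with $|\beta'|=|\beta|+1$. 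Applying~\eqref{eq:phi_assumption} to $\phi$ yields
\begin{equation*}
|\partial^\beta_{\mathbf{x}}\phi_\zeta(\mathbf{x},s)|\leq s^{-1}\,C'_\beta\,s^{|\beta|+1}\eta(\mathbf{x},s)=C'_\beta\,s^{|\beta|}\eta(\mathbf{x},s),
\end{equation*}
so $\phi_\zeta$ satisfies~\eqref{eq:phi_assumption}, completing the proof.

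There is essentially no obstacle here: the point is just that radiality of $\phi(\cdot,s)$ collapses the difference part of $T_\zeta\phi$, reducing the general formula of Lemma~\ref{lem:derivative_formula} (with its complicated sum over $f(\sigma_\alpha(\mathbf{x}))$) to the clean radial Leibniz form.
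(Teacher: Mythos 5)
Your proof is correct and matches the paper's (one-line) argument: apply the radial Leibniz rule~\eqref{eq:Leibniz}, note that the difference part of $T_\zeta\phi(\cdot,s)$ vanishes by radiality so $T_\zeta\phi(\cdot,s)=\partial_{\mathbf{x},\zeta}\phi(\cdot,s)$, and set $\phi_\zeta=s^{-1}\partial_{\mathbf{x},\zeta}\phi$, whose bound~\eqref{eq:phi_assumption} follows from the hypothesis on $\phi$. Your write-up simply makes explicit what the paper leaves as ``follows directly.''
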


\begin{proof}
The claim follows directly by~\eqref{eq:Leibniz} and~\eqref{eq:exp_derivatives}.
\end{proof}}
For $\sigma \in G$ let $f^{\sigma}(\mathbf{x})=f(\sigma(\mathbf x))$. It is easy to check that  for all $\zeta \neq 0$ we have 
\begin{equation}\label{eq:derivative_to_reflected}
    T_{\zeta}f^{\sigma}(\mathbf{x})=(T_{\sigma(\zeta)}f)(\sigma(\mathbf{x}))\text{ for all }\mathbf{x}\in \mathbb{R}^N.
\end{equation}
Iteration of  Lemma~\ref{lem:derivative_formula} together with \eqref{eq:derivative_to_reflected} and  Lemma~\ref{lem:derivative_formula_radial} gives  the following proposition.
\begin{propo}\label{pro:beta}
For every $\beta \in \mathbb{N}_0^N$ there are functions $\phi_{\beta,\beta',\sigma}(\mathbf x,s)$ which satisfy~\eqref{eq:phi_assumption} such that  for all $f\in C_c^\infty (\mathbb R^N)$, $\mathbf x\in\mathbb R^N$, and $s>1/4$ we have 
\begin{equation}\label{eq:derivative_iteration}
\begin{split}
    T^\beta(f(\cdot)\eta(\cdot, s))(\mathbf x)&=T^\beta f(\mathbf x)\eta(\mathbf x,s)\\
   & +
   \sum_{\sigma\in G}\,  \sum_{|\beta'|< |\beta|}  s^{|\beta|-|\beta'|}(T^{\beta'}f)(\sigma(\mathbf x))\phi_{\beta, \beta', \sigma} (\mathbf x,s).
\end{split}\end{equation}\end{propo}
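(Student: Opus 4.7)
The proof goes by induction on $|\beta|$. For $|\beta|=0$ there is nothing to prove (the sum on the right-hand side is empty), and for $|\beta|=1$ the identity reduces to Lemma~\ref{lem:derivative_formula_radial}, since $\eta(\mathbf{x},s)$ is radial in $\mathbf{x}$: writing $T^\beta=T_\zeta$ with $\zeta=e_j$, Lemma~\ref{lem:derivative_formula_radial} gives $T_\zeta(f\eta(\cdot,s)) = \eta(\cdot,s)T_\zeta f + s\eta_\zeta(\cdot,s)f$ with $\eta_\zeta$ satisfying~\eqref{eq:phi_assumption}, which fits the desired form with $\sigma=\mathrm{id}$ and $\beta'=0$.

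For the inductive step, assume the formula for some multi-index $\beta''$ with $|\beta''|=n$, and write $T^\beta = T_j \circ T^{\beta''}$ with $|\beta|=n+1$. Apply $T_j$ to both sides of the formula for $T^{\beta''}(f\eta(\cdot,s))$. The main term $T^{\beta''}f(\mathbf{x})\,\eta(\mathbf{x},s)$ is again a product of a smooth function with the radial weight $\eta(\cdot,s)$, so Lemma~\ref{lem:derivative_formula_radial} yields
\[
T_j\bigl(T^{\beta''}f\cdot\eta(\cdot,s)\bigr)(\mathbf{x}) = \eta(\mathbf{x},s)\,T^\beta f(\mathbf{x}) + s\,\tilde\phi(\mathbf{x},s)\,T^{\beta''}f(\mathbf{x}),
\]
where $\tilde\phi$ satisfies~\eqref{eq:phi_assumption}. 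The first summand is the leading term of the target formula; the second fits with $\sigma=\mathrm{id}$ and $\beta'=\beta''$, the correct power $s^{|\beta|-|\beta''|}=s$.

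Each of the remaining summands from the inductive hypothesis has the shape $s^{|\beta''|-|\beta'|}\,(T^{\beta'}f)^\sigma(\mathbf{x})\,\phi_{\beta'',\beta',\sigma}(\mathbf{x},s)$. To $T_j$ of such a product we apply Lemma~\ref{lem:derivative_formula}, which produces a summand $\phi_{\beta'',\beta',\sigma}(\mathbf{x},s)\,T_j\bigl((T^{\beta'}f)^\sigma\bigr)(\mathbf{x})$ plus further terms of the form $s\,\psi(\mathbf{x},s)\,(T^{\beta'}f)^{\tilde\sigma\sigma}(\mathbf{x})$ with $\tilde\sigma\in\{\mathrm{id}\}\cup\{\sigma_\alpha:\alpha\in R\}$ and $\psi$ satisfying~\eqref{eq:phi_assumption}. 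Applying~\eqref{eq:derivative_to_reflected} to the first piece rewrites $T_j\bigl((T^{\beta'}f)^\sigma\bigr)(\mathbf{x})=(T_{\sigma(e_j)}T^{\beta'}f)(\sigma(\mathbf{x}))$; expanding $T_{\sigma(e_j)}$ as a linear combination $\sum_k c_k T_k$ and using commutativity of the Dunkl operators rewrites this as a linear combination of $(T^\gamma f)(\sigma(\mathbf{x}))$ with $|\gamma|=|\beta'|+1$. After collecting, every resulting term has the form $s^{|\beta|-|\gamma|}(T^\gamma f)(\sigma'(\mathbf{x}))\,\Phi(\mathbf{x},s)$ with $|\gamma|<|\beta|$, some $\sigma'\in G$, and $\Phi$ satisfying~\eqref{eq:phi_assumption}; summing the contributions (with identical $\gamma$ and $\sigma'$) gives the functions $\phi_{\beta,\gamma,\sigma'}$ of the statement.

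The main bookkeeping obstacle is the matching of the powers of $s$: the factor $s^{|\beta''|-|\beta'|}$ from the inductive hypothesis becomes $s^{|\beta|-(|\beta'|+1)}=s^{|\beta|-|\gamma|}$ for the derivative branch (no extra $s$) and $s^{|\beta|-|\beta'|}$ for the reflection branch (one extra $s$ from Lemma~\ref{lem:derivative_formula}), so both branches fit the required exponent $|\beta|-|\beta'|$. The stability of the class~\eqref{eq:phi_assumption} under $\phi\mapsto s^{-1}T_\zeta\phi$ and under the symmetrized difference $\psi_\alpha$ is exactly Lemma~\ref{lem:difference_part}, and it is preserved by multiplication of two such functions as well, so the newly produced $\phi_{\beta,\beta',\sigma}$ inherit~\eqref{eq:phi_assumption}. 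This completes the induction.
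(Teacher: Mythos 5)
Your argument is correct and is precisely the iteration that the paper's one-line proof alludes to: induction on $|\beta|$, handling the leading term via Lemma~\ref{lem:derivative_formula_radial} (radiality of $\eta(\cdot,s)$) and the lower-order terms via Lemma~\ref{lem:derivative_formula}, \eqref{eq:derivative_to_reflected}, and linearity of $\zeta\mapsto T_\zeta$, with the power count $s^{|\beta''|-|\beta'|}\cdot s = s^{|\beta|-|\beta'|}$ coming out right in each branch. One small caveat: your closing claim that the class~\eqref{eq:phi_assumption} is closed under multiplication is false as stated (a product of two such functions is only bounded by $C s^{|\beta|}\eta(\mathbf{x},s)^2\sim Cs^{|\beta|}\eta(\mathbf{x},2s)$, not by $Cs^{|\beta|}\eta(\mathbf{x},s)$), but your induction never actually forms such a product --- every new coefficient is a constant multiple of a single function supplied by Lemmas~\ref{lem:difference_part}--\ref{lem:derivative_formula}, and the class is closed under finite linear combinations --- so the proof stands.
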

\subsection{Weighted Hilbert spaces}
We define a family $\{\mathcal H_s\}_{s>0}$  of weighted $L^2$-spaces by 
$$ \mathcal H_s=\Big\{f \in L^2(dw): \| f\|_{\mathcal H_s}^2:=\int_{\mathbb{R}^N} |f(\mathbf x)|^2\eta(\mathbf x,s)\, dw(\mathbf x)<\infty\Big\}. $$ 
To unify our notation we write
$$\mathcal{H}_0=L^2(dw).$$ 
Clearly, for $s_1 \leq s_2$ we have
\begin{equation}\label{eq:inclusion}
    \mathcal{H}_{s_2} \subset \mathcal{H}_{s_1} \text{ and }\|f\|_{\mathcal{H}_{s_1}} \leq \|f\|_{\mathcal{H}_{s_2}}.
\end{equation}
Let us note that for all $\mathbf{x} \in \mathbb{R}^N$ and $s>0$ we have
\begin{equation}
\eta(\mathbf x,2s)\leq \eta^{2}(\mathbf x,s)\leq e^2\eta(\mathbf x,2s).
\end{equation} 
Therefore, 
\begin{equation}\label{eq:comparison1}
 \| f\|^2_{\mathcal H_{2s}} \leq \int_{\mathbb{R}^N} |f(\mathbf x)|^2\eta^2(\mathbf x,s)\, dw(\mathbf x)\leq e^2 \| f\|_{\mathcal H_{2s}}^2.
 \end{equation}
Let us recall that $\eta(\mathbf{x})=\eta(\mathbf{x},1)$. The following corollary in a consequence of~\eqref{eq:derivative_iteration} and~\eqref{eq:comparison1}.
\begin{coro}
For every $\beta \in \mathbb{N}_0^{N}$ there is a constant $C_\beta>0$ such that for every $f\in C_c^\infty (\mathbb R^N)$ we have
\begin{equation}\label{eq:f_eta_1}
 \| T^\beta f\|_{L^2(\eta^2dw)}^2   \leq C_\beta \| T^\beta (f\eta)\|_{L^2(dw)}^2
  +C_\beta \sum_{|\beta'|<|\beta|} \| T^{\beta'} f\|_{L^2(\eta^2dw)}^2,
\end{equation}
\begin{equation}\label{eq:f_eta_0}
    \| T^\beta (f\eta)\|_{L^2(dw)}^2\leq C_\beta \| T^\beta f\|_{L^2(\eta^2dw)}^2 +C_\beta \sum_{|\beta'|<|\beta|} \| T^{\beta'} f\|_{L^2(\eta^2dw)}^2.
\end{equation}
\end{coro}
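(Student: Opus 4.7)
The plan is to apply Proposition~\ref{pro:beta} at $s=1$, which supplies the pointwise identity
\begin{equation*}
T^{\beta}(f\eta)(\mathbf{x})=\eta(\mathbf{x})\,T^{\beta}f(\mathbf{x})+\sum_{\sigma\in G}\sum_{|\beta'|<|\beta|}(T^{\beta'}f)(\sigma(\mathbf{x}))\,\phi_{\beta,\beta',\sigma}(\mathbf{x},1),
\end{equation*}
together with the pointwise bound $|\phi_{\beta,\beta',\sigma}(\mathbf{x},1)|\leq C\,\eta(\mathbf{x})$ coming from~\eqref{eq:phi_assumption} at $s=1$. Everything else is bookkeeping.

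For inequality~\eqref{eq:f_eta_0}, I would square this identity, integrate against $dw$, and apply the elementary inequality $(a_{1}+\cdots+a_{n})^{2}\leq n(a_{1}^{2}+\cdots+a_{n}^{2})$. The leading term contributes a multiple of $\|T^{\beta}f\|^{2}_{L^{2}(\eta^{2}dw)}$, while each remainder term takes the form
\begin{equation*}
\int_{\mathbb{R}^{N}}|(T^{\beta'}f)(\sigma(\mathbf{x}))|^{2}\,|\phi_{\beta,\beta',\sigma}(\mathbf{x},1)|^{2}\,dw(\mathbf{x}).
\end{equation*}
Using $|\phi_{\beta,\beta',\sigma}(\mathbf{x},1)|\leq C\eta(\mathbf{x})$ and then the substitution $\mathbf{y}=\sigma(\mathbf{x})$, each such integral reduces to a multiple of $\|T^{\beta'}f\|^{2}_{L^{2}(\eta^{2}dw)}$ with $|\beta'|<|\beta|$. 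The change of variables is legitimate because $dw$ is $G$-invariant (the multiplicity function $k$ is $G$-invariant and $G$ permutes the roots) and $\eta$ is radial by~\eqref{eq:eta}, hence $G$-invariant as well. This yields~\eqref{eq:f_eta_0}.

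For inequality~\eqref{eq:f_eta_1}, I would first solve the identity above for the leading term,
\begin{equation*}
\eta(\mathbf{x})T^{\beta}f(\mathbf{x})=T^{\beta}(f\eta)(\mathbf{x})-\sum_{\sigma\in G}\sum_{|\beta'|<|\beta|}(T^{\beta'}f)(\sigma(\mathbf{x}))\,\phi_{\beta,\beta',\sigma}(\mathbf{x},1),
\end{equation*}
and then repeat the same procedure: square, integrate against $dw$, bound the leading piece by $2\|T^{\beta}(f\eta)\|^{2}_{L^{2}(dw)}$, and estimate the sum of remainder terms by the weighted $L^{2}$ norms of the lower-order $T^{\beta'}f$ exactly as above.

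I do not expect any real obstacle; the entire content of the corollary is already packaged inside Proposition~\ref{pro:beta}, which plays the role of a generalized Leibniz expansion for $T^{\beta}$ on the product $f\eta$. The only point worth being careful about is the absorption of the reflected arguments $(T^{\beta'}f)(\sigma(\mathbf{x}))$ into the weighted norms, and this is handled, as indicated, by invoking the $G$-invariance of both $dw$ and $\eta$.
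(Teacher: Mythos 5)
Your proposal is correct and is exactly the argument the paper intends: it applies Proposition~\ref{pro:beta} at $s=1$, bounds the coefficients via $|\phi_{\beta,\beta',\sigma}(\mathbf{x},1)|\leq C\eta(\mathbf{x})$ from \eqref{eq:phi_assumption}, and absorbs the reflected terms using the $G$-invariance of $dw$ and of the radial weight $\eta$. The paper states the corollary as an immediate consequence of \eqref{eq:derivative_iteration} (and \eqref{eq:comparison1}), and your write-up supplies precisely the omitted bookkeeping.
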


\begin{propo}\label{propo:pP}
For every $\delta>0$ and $\ell_1 \in \mathbb{N}$ (in particular, for $\ell_1=\ell$) there is a constant $C_{\delta,\ell_1}>0$ such that for all $f \in C^{\infty}_c(\mathbb{R}^N)$ we have 
\begin{equation}\label{eq:pP}
    \sum_{|\beta|<\ell_1} \| T^\beta f\|_{L^2(\eta^2dw)}^2 \leq \delta \sum_{j=1}^m \| T_{\zeta_j}^{\ell_1} f\|_{L^2(\eta^2dw)}^2+C_{\delta,\ell_1} \| f\|_{L^2(\eta^2dw)}^2.
\end{equation}
\end{propo}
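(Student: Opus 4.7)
The plan is to transfer the inequality to the unweighted $L^2(dw)$ setting via the substitution $g=f\eta$, use the Dunkl--Plancherel identity together with the hypothesis that $\zeta_1,\dots,\zeta_m$ span $\mathbb{R}^N$ to reduce matters to an elementary polynomial inequality in the Fourier variable, and finally undo the substitution using Proposition~\ref{pro:beta} to handle the failure of the Leibniz rule for the Dunkl operators.

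First, iterating \eqref{eq:f_eta_1} over $|\beta|=0,1,\dots,\ell_1-1$ yields the one-sided comparison
\begin{equation*}
\sum_{|\beta|<\ell_1}\|T^{\beta}f\|_{L^2(\eta^2 dw)}^2\leq C\sum_{|\beta|<\ell_1}\|T^{\beta}(f\eta)\|_{L^2(dw)}^2.
\end{equation*}
Setting $g:=f\eta\in C_c^\infty(\mathbb{R}^N)$, the identities \eqref{eq:Plancherel} and \eqref{eq:transform_deriv} give
\begin{equation*}
\|T^{\beta}g\|_{L^2(dw)}^2=\int_{\mathbb{R}^N}\xi^{2\beta}|\mathcal{F}g(\xi)|^2\,dw(\xi),
\end{equation*}
and since $\zeta_1,\dots,\zeta_m$ span $\mathbb{R}^N$, the even homogeneous polynomial $P(\xi)=\sum_{j=1}^m\langle\zeta_j,\xi\rangle^{2\ell_1}$ is strictly positive on the unit sphere, so $P(\xi)\geq c\|\xi\|^{2\ell_1}$ for some $c>0$. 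Combining this with the elementary bound $\sum_{|\beta|<\ell_1}\xi^{2\beta}\leq \varepsilon\|\xi\|^{2\ell_1}+C_\varepsilon$ (obtained by splitting $\|\xi\|\leq R$ versus $\|\xi\|\geq R$ and optimizing in $R$) and once more \eqref{eq:transform_deriv}, I obtain
\begin{equation*}
\sum_{|\beta|<\ell_1}\|T^{\beta}g\|_{L^2(dw)}^2\leq C\varepsilon\sum_{j=1}^m\|T_{\zeta_j}^{\ell_1}g\|_{L^2(dw)}^2+C_\varepsilon\|g\|_{L^2(dw)}^2.
\end{equation*}
To pass back from $g=f\eta$ to $f$ at top order, I expand $T_{\zeta_j}^{\ell_1}=\sum_{|\beta|=\ell_1}c_{j,\beta}T^{\beta}$ (the coordinate Dunkl operators commute) and apply Proposition~\ref{pro:beta} with $s=1$ to each summand; the leading contribution is $\eta\cdot T_{\zeta_j}^{\ell_1}f$, and the remainder is a sum of $(T^{\beta'}f)(\sigma\mathbf{x})$ with $|\beta'|<\ell_1$ and $\sigma\in G$, multiplied by smooth coefficients pointwise bounded by a constant times $\eta$. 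The $G$-invariance of $dw$ and the radiality of $\eta$ make the change of variable $\mathbf{x}\mapsto\sigma^{-1}\mathbf{x}$ in each reflected term harmless, yielding
\begin{equation*}
\|T_{\zeta_j}^{\ell_1}(f\eta)\|_{L^2(dw)}^2\leq C\|T_{\zeta_j}^{\ell_1}f\|_{L^2(\eta^2 dw)}^2+C\sum_{|\beta|<\ell_1}\|T^{\beta}f\|_{L^2(\eta^2 dw)}^2.
\end{equation*}
Chaining the three displays and choosing $\varepsilon$ small enough that the coefficient multiplying $\sum_{|\beta|<\ell_1}\|T^{\beta}f\|_{L^2(\eta^2 dw)}^2$ on the right is at most $1/2$ allows that sum to be absorbed into the left-hand side, producing \eqref{eq:pP} with $\delta$ proportional to $\varepsilon$.

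The only delicate step is the last one: the ordinary product rule fails for the Dunkl operators, so one cannot simply peel $\eta$ off a top-order derivative. Proposition~\ref{pro:beta} supplies precisely the right commutator expansion, with correction coefficients controlled pointwise by $\eta$ thanks to \eqref{eq:phi_assumption}; the reflected arguments $\sigma\mathbf{x}$ generated by the iteration are then neutralized by the $G$-invariance of both $dw$ and $\eta$, which lets each reflected norm be rewritten as a weighted norm of $T^{\beta'}f$ itself.
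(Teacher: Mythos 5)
Your proof is correct and uses essentially the same ingredients as the paper: the weighted-to-unweighted comparison via Proposition~\ref{pro:beta} (i.e.\ \eqref{eq:f_eta_0}--\eqref{eq:f_eta_1}), the Plancherel/spanning argument giving the polynomial inequality on the Fourier side (the paper's \eqref{eq:f_eta_2}--\eqref{eq:f_eta_3}), and a final absorption with $\varepsilon$ small. The only difference is organizational: the paper runs an induction on $\ell_1$ to dispose of the lower-order terms, whereas you absorb them directly into the left-hand side in one pass, which is legitimate since $f\in C_c^\infty$ makes all the norms finite.
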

\begin{proof}
 Thanks to ~\eqref{eq:Plancherel},~\eqref{eq:transform_deriv},  ~\eqref{eq:transform_L},  and  the fact that  $\zeta_1,...,\zeta_m$ span $\mathbb{R}^N$, we get that for every $\beta \in \mathbb{N}_0^N$ there is a constant $C_\beta>0$ such that 
\begin{equation}\label{eq:f_eta_2} 
\| T^\beta f\|_{L^2(dw)}^2\leq C_\beta \sum_{j=1}^m \| T_{\zeta_j}^{|\beta|} f\|_{L^2(dw)}^2.
\end{equation}
Moreover, for every $\ell_1 \in \mathbb{N}_0$, $\beta \in \mathbb{N}_0^N$ such that  $|\beta|<\ell_1$, and every $\delta>0$ there is a constant $C_{\beta,\delta}>0$ such that 
\begin{equation} \label{eq:f_eta_3}
\| T^\beta f\|_{L^2(dw)}^2\leq \delta \sum_{j=1}^m \| T_{\zeta_j}^{\ell_1} f\|_{L^2(dw)}^2 +C_{\beta,\delta} \| f\|^2_{L^2(dw)}.
\end{equation}
The proof of~\eqref{eq:pP} is by induction on $\ell_1$. Assume that~\eqref{eq:pP} holds for $\ell_1$. 
Using~\eqref{eq:f_eta_1} we have
\begin{equation}\label{eq:hyp_usage_0}
\begin{split}
    \sum_{|\beta|<\ell_1+1}\| T^\beta f\|_{L^2(\eta^2dw)}^2&\leq C\sum_{|\beta |<\ell_1+1} \| T^{\beta} (f\eta)\|_{L^2(dw)}^2 +C\sum_{|\beta'|<\ell_1} \|T^{\beta'} f\|^2_{L^2(\eta^2dw)}.
\end{split}
\end{equation}
Then, by~\eqref{eq:f_eta_3} (for the first summand) and induction hypothesis~\eqref{eq:pP} (for the second summand) for any $\varepsilon>0$ we get
\begin{equation}\label{eq:hyp_usage}
\begin{split}
     C\sum_{|\beta |<\ell_1+1} \| T^{\beta} (f\eta)\|_{L^2(dw)}^2 & +C\sum_{|\beta'|<\ell_1} \|T^{\beta'} f\|^2_{L^2(\eta^2dw)}\\
     &\leq \varepsilon C \sum_{j=1}^m \| T^{\ell_1+1}_{\zeta_j}(f\eta)\|_{L^2(dw)}^2+C'_\varepsilon \| f\eta\|_{L^2(dw)}^2\\
   & \ \ + \varepsilon C \sum_{j=1}^m \| T_{\zeta_j}^{\ell_1} f\|_{L^2(\eta^2dw)}^2+ C'C_\varepsilon \| f\|_{L^2(\eta^2dw)}^2.
   \end{split}
\end{equation}
Finally, joining~\eqref{eq:hyp_usage_0} and~\eqref{eq:hyp_usage} and applying~\eqref{eq:f_eta_0} we get
\begin{align*}
    \sum_{|\beta|<\ell_1+1}\| T^\beta f\|_{L^2(\eta^2dw)}^2 &\leq \varepsilon C_{\ell_1} \sum_{j=1}^m \| T_{\zeta_j}^{\ell_1+1} f\|^2_{L^2(\eta^2dw)}+\varepsilon C_{\ell_1}  \sum_{|\beta|<\ell_1 +1} \| T^\beta f\|^2_{L^2(\eta^2dw)}\\
   &\ \ +  \varepsilon C \sum_{j=1}^m \| T_{\zeta_j}^{\ell_1} f\|_{L^2(\eta^2dw)}^2+ C'C_\varepsilon \| f\|_{L^2(\eta^2dw)}^2.
\end{align*}
The proof is finished by taking $\varepsilon=\frac{1}{4}\min\{\delta,1\}(C_{\ell_1}+C)^{-1}$.
\end{proof}
\begin{propo}
Let $\beta \in \mathbb{N}_0^N$. There is a constant $C_{\beta}>0$ such that for all $f \in C^\infty_c(\mathbb{R}^N)$ we have
\begin{equation}\label{eq:pP_main}
    \|T^{\beta}f\|_{L^2(\eta^2 dw)}^2 \leq C_{\beta}\Big(\sum_{j=1}^m\|T^{|\beta|}_{\zeta_j}f\|^2_{L^2(\eta^2dw)}+\|f\|^2_{L^2(\eta^2dw)}\Big).
\end{equation}
\end{propo}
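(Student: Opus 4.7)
The plan is to combine the chain-rule decomposition from Proposition~\ref{pro:beta}, the Fourier-side control~\eqref{eq:f_eta_2} that exploits the spanning of $\zeta_1,\ldots,\zeta_m$, and Proposition~\ref{propo:pP} to absorb lower-order terms. Concretely, I would execute a four-step sandwich: first pass from $\|T^\beta f\|_{L^2(\eta^2 dw)}$ to $\|T^\beta(f\eta)\|_{L^2(dw)}$ using~\eqref{eq:f_eta_1}; second, exploit the spanning condition via~\eqref{eq:f_eta_2} applied to the compactly supported smooth function $f\eta$ to replace an arbitrary $T^\beta$ by the directional operators $T^{|\beta|}_{\zeta_j}$; third, transfer the weight back from $f\eta$ onto $f$ via a chain-rule argument in the spirit of Proposition~\ref{pro:beta}; and fourth, invoke Proposition~\ref{propo:pP} to absorb the remaining lower-order contributions.

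In detail, \eqref{eq:f_eta_1} yields
\[\|T^\beta f\|_{L^2(\eta^2 dw)}^2 \leq C_\beta \|T^\beta(f\eta)\|_{L^2(dw)}^2 + C_\beta \sum_{|\beta'|<|\beta|} \|T^{\beta'} f\|_{L^2(\eta^2 dw)}^2,\]
and \eqref{eq:f_eta_2} applied to $g=f\eta$ gives
\[\|T^\beta(f\eta)\|_{L^2(dw)}^2 \leq C \sum_{j=1}^m \|T^{|\beta|}_{\zeta_j}(f\eta)\|_{L^2(dw)}^2.\]
For the next step, I would observe that the proof of Proposition~\ref{pro:beta} relied only on iterated applications of Lemmas~\ref{lem:derivative_formula} and~\ref{lem:derivative_formula_radial}, both of which hold for an arbitrary direction $\zeta\neq 0$. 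The same iteration, applied to the composition $T^{|\beta|}_{\zeta_j}$, produces at $s=1$ the analog
\[T^{|\beta|}_{\zeta_j}(f\eta)(\mathbf{x}) = \eta(\mathbf{x})\, T^{|\beta|}_{\zeta_j} f(\mathbf{x}) + \sum_{\sigma \in G} \sum_{|\beta'|<|\beta|} (T^{\beta'} f)(\sigma(\mathbf{x}))\, \tilde\phi_{\beta,\beta',\sigma,j}(\mathbf{x},1),\]
with each $\tilde\phi_{\beta,\beta',\sigma,j}$ satisfying~\eqref{eq:phi_assumption}. Squaring, using the $G$-invariance of $dw$ together with the radiality of $\eta$ to rewrite $\|(T^{\beta'}f)^\sigma\|_{L^2(\eta^2 dw)} = \|T^{\beta'}f\|_{L^2(\eta^2 dw)}$, and summing over $j$ yields
\[\sum_{j=1}^m \|T^{|\beta|}_{\zeta_j}(f\eta)\|_{L^2(dw)}^2 \leq C\sum_{j=1}^m \|T^{|\beta|}_{\zeta_j} f\|_{L^2(\eta^2 dw)}^2 + C\sum_{|\beta'|<|\beta|} \|T^{\beta'} f\|_{L^2(\eta^2 dw)}^2.\]

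Chaining these three estimates yields $\|T^\beta f\|_{L^2(\eta^2 dw)}^2$ on the left and, on the right, $\sum_j \|T^{|\beta|}_{\zeta_j} f\|_{L^2(\eta^2 dw)}^2$ plus the leftover sum $\sum_{|\beta'|<|\beta|} \|T^{\beta'} f\|_{L^2(\eta^2 dw)}^2$. Invoking Proposition~\ref{propo:pP} with $\ell_1=|\beta|$ and, say, $\delta=1$ controls the latter by $\sum_j \|T^{|\beta|}_{\zeta_j} f\|_{L^2(\eta^2 dw)}^2 + C\|f\|_{L^2(\eta^2 dw)}^2$, completing the proof. The main obstacle is Step~3: although the generalization of Proposition~\ref{pro:beta} from coordinate Dunkl operators $T^\beta$ to the directional composition $T^{|\beta|}_{\zeta_j}$ is essentially bookkeeping, one must verify that the leading term of the resulting chain-rule expansion is exactly $\eta\, T^{|\beta|}_{\zeta_j} f$ and not some other linear combination of order-$|\beta|$ derivatives of $f$ — this is crucial, because Step~2 was designed specifically to produce the quantity $\sum_j \|T^{|\beta|}_{\zeta_j} f\|_{L^2(\eta^2 dw)}^2$ on the right-hand side, and any mismatch would prevent the sandwich from closing.
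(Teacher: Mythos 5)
Your proposal is correct and follows essentially the same route as the paper: \eqref{eq:f_eta_1} to pass to $f\eta$, \eqref{eq:f_eta_2} to exploit the spanning of $\zeta_1,\dots,\zeta_m$, the directional analogue of \eqref{eq:f_eta_0} to return the weight to $f$, and Proposition~\ref{propo:pP} with $\delta=1$ to absorb the lower-order terms. The paper simply cites \eqref{eq:f_eta_0} for the step you label as the main obstacle, leaving implicit the extension of Proposition~\ref{pro:beta} from coordinate operators $T^\beta$ to the compositions $T_{\zeta_j}^{|\beta|}$, which you verify explicitly.
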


\begin{proof}
Thanks to~\eqref{eq:f_eta_1} and Proposition~\ref{propo:pP} with $\delta=1$ we get
\begin{align*}
    \|T^{\beta}f\|_{L^2(\eta^2 dw)}^2 \leq C_{\beta}\|T^{\beta}(f\eta)\|^2_{L^2(dw)}+C_{\beta}\sum_{j=1}^{m}\|T^{|\beta|}_{\zeta_j}f\|^2_{L^2(\eta^2dw)}+C_{\beta}\|f\|_{L^2(\eta^2dw)}^2.
\end{align*}
In order to estimate $\|T^{\beta}(f\eta)\|^2_{L^2(dw)}$, we use~\eqref{eq:f_eta_2}, then~\eqref{eq:f_eta_0}, which lead to
\begin{align*}
    \|T^{\beta}(f\eta)\|_{L^2(dw)}^2&\leq C_{\beta}'\sum_{j=1}^{m}\|T_{\zeta_j}^{|\beta|}(f\eta)\|^2_{L^2(dw)} \\&\leq C_{\beta}''\sum_{j=1}^{m}\|T_{\zeta_j}^{|\beta|}f\|^2_{L^2(\eta^2dw)}+C_{\beta}''\sum_{|\beta'|<|\beta|}\|T^{\beta'}f\|^2_{L^2(\eta^2dw)}.
\end{align*}
The claim follows by Proposition~\ref{propo:pP} with $\delta=1$ applied to $\sum_{|\beta'|<|\beta|}\|T^{\beta'}f\|^2_{L^2(\eta^2dw)}$.
\end{proof}
{
\begin{coro}\label{coro:garding_pre}
Let $n<\ell$ be a positive integer.  For every $\delta>0$ there is a constant $C=C_{\delta}>0$ such that for all $f \in C_c^\infty(\mathbb R^N)$ and for all $s>1/4$ we have
\begin{equation}\label{eq:garding_pre_1}
s^{2(\ell-n)}\sum_{|\beta|=n}\|T^{\beta}f\|^2_{L^2(\eta^2(\cdot,s)dw)} \leq \delta \sum_{j=1}^{m}\|T_{\zeta_j}^{\ell}f\|^2_{L^2(\eta^2(\cdot,s)dw)}+Cs^{2\ell}\|f\|^2_{L^2(\eta^2(\cdot,s)dw)},
\end{equation}
\begin{equation}\label{eq:garding_pre_2}
s^{2(\ell-n)}\sum_{|\beta|=n}\|T^{\beta}f\|^2_{\mathcal H_s} \leq \delta \sum_{j=1}^{m}\|T_{\zeta_j}^{\ell}f\|^2_{\mathcal H_s} +Cs^{2\ell}\|f\|^2_{\mathcal H_s}.
\end{equation}
\end{coro}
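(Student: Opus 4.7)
The plan is to reduce both inequalities to Proposition~\ref{propo:pP} (which is essentially the case $s=1$) via a dilation argument, and then to pass between the weights $\eta^2(\cdot,s)$ and $\eta(\cdot,s)$ by means of the comparison in~\eqref{eq:comparison1}. The key observation is that the Dunkl operators transform simply under the scaling $\mathbf{x}\mapsto \mathbf{x}/s$, while $\eta(\cdot,1)$ pulled back by $\mathbf{x}\mapsto s\mathbf{x}$ is exactly $\eta(\cdot,s)$.

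To prove~\eqref{eq:garding_pre_1}, fix $s>1/4$ and $f\in C^\infty_c(\mathbb{R}^N)$, and set $g(\mathbf{x}):=f(\mathbf{x}/s)$. A direct computation from~\eqref{eq:T_j} yields $T^\beta g(\mathbf{x})=s^{-|\beta|}(T^\beta f)(\mathbf{x}/s)$. Combining this with the change of variable~\eqref{eq:change_var} and the identity $\eta(s\mathbf{y},1)=\eta(\mathbf{y},s)$ gives
$$\|T^\beta g\|_{L^2(\eta^2dw)}^2=s^{\mathbf N-2|\beta|}\|T^\beta f\|_{L^2(\eta^2(\cdot,s)dw)}^2,\qquad\|g\|_{L^2(\eta^2dw)}^2=s^{\mathbf N}\|f\|_{L^2(\eta^2(\cdot,s)dw)}^2.$$
Applying Proposition~\ref{propo:pP} with $\ell_1=\ell$ to $g$, substituting these identities, dividing through by $s^{\mathbf N-2\ell}$, and retaining on the left-hand side only the terms with $|\beta|=n$ produces~\eqref{eq:garding_pre_1}. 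Note that this argument in fact delivers~\eqref{eq:garding_pre_1} for \emph{every} $s>0$.

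For~\eqref{eq:garding_pre_2}, I would invoke~\eqref{eq:comparison1} with $s$ replaced by $s/2$, which yields the two-sided equivalence $\|h\|_{\mathcal{H}_s}^2\le\|h\|_{L^2(\eta^2(\cdot,s/2)dw)}^2\le e^2\|h\|_{\mathcal{H}_s}^2$ for every $h$. I would then apply~\eqref{eq:garding_pre_1} with $s$ replaced by $s/2$, use this equivalence to convert each of the three $L^2(\eta^2(\cdot,s/2)dw)$-norms into $\mathcal{H}_s$-norms, and finally multiply through by $4^{\ell-n}$. The factor $4^{\ell-n}$ turns $(s/2)^{2(\ell-n)}$ into $s^{2(\ell-n)}$ and turns $(s/2)^{2\ell}$ into $4^{-n}s^{2\ell}\le s^{2\ell}$; the multiplicative constants $4^{\ell-n}e^2$ and $e^2$ that are picked up in front of $\delta$ and $C_\delta$ are then absorbed by renaming $\delta$, which is legitimate since $\delta$ in~\eqref{eq:garding_pre_1} is arbitrary. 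This yields~\eqref{eq:garding_pre_2}.

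The argument is essentially routine bookkeeping; the only thing to watch is the correct tracking of the various factors arising from the dilation---the Jacobian $s^{\mathbf N}$, the scaling exponent $-|\beta|$ of $T^\beta$, and the three weights $\eta(\cdot,s)$, $\eta^2(\cdot,s)$, $\eta(\cdot,s/2)$. There are no new analytic ingredients beyond Proposition~\ref{propo:pP} and the equivalence~\eqref{eq:comparison1}.
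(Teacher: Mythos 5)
Your proof is correct and follows essentially the same route as the paper: inequality \eqref{eq:garding_pre_1} is obtained by applying Proposition~\ref{propo:pP} to the dilate of $f$ (the paper uses the $L^2$-normalized dilate $s^{-\mathbf N/2}f(\cdot/s)$, which only changes the bookkeeping of the Jacobian factor $s^{\mathbf N}$), and \eqref{eq:garding_pre_2} then follows from \eqref{eq:garding_pre_1} and \eqref{eq:comparison1}. Your explicit handling of the passage from the weight $\eta^2(\cdot,s/2)$ to $\eta(\cdot,s)$, including the observation that the scaling argument yields \eqref{eq:garding_pre_1} for all $s>0$ so that the substitution $s\mapsto s/2$ is legitimate, correctly fills in the details the paper leaves implicit.
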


\begin{proof}
Let us apply~\eqref{eq:pP} to $f_{\{s\}}(\mathbf{x})=\frac{1}{s^{\mathbf{N}/2}}f(\mathbf{x}/s)$. Then~\eqref{eq:garding_pre_1} follows from the fact that
\begin{align*}
&\|T^{\beta}f_{\{s\}}\|^2_{L^2(\eta^2(\cdot)\,dw)}=s^{-2|\beta|}\|T^{\beta}f\|^2_{L^2(\eta^2 (\cdot , s)\,dw)}.
\end{align*}
Finally, \eqref{eq:garding_pre_2} is a consequence of~\eqref{eq:garding_pre_1} and~\eqref{eq:comparison1}. 
\end{proof}
\begin{coro}\label{coro:garding_pre_1}
There is a constant $C>0$ such that for all $f \in C^{\infty}_c(\mathbb{R}^N)$ and $s>1/4$ we have
\begin{equation}\label{eq:garding_pre_3}
    \sum_{|\beta|=\ell}\|T^{\beta}f\|^2_{\mathcal{H}_s} \leq C\sum_{j=1}^{m}\|T^\ell_{\zeta_j}f\|^2_{\mathcal{H}_s}+Cs^{2\ell}\|f\|^2_{\mathcal{H}_s}.
\end{equation}
\end{coro}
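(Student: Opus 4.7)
My plan is to mimic the scaling argument already used in the proof of Corollary~\ref{coro:garding_pre}, but with Proposition~\eqref{eq:pP_main} taking the place of Proposition~\ref{propo:pP}. The key advantage of~\eqref{eq:pP_main} over~\eqref{eq:pP} is that it gives a genuine estimate for derivatives of top order $|\beta|=\ell$ in terms of $\sum_j \|T^{\ell}_{\zeta_j}f\|$, rather than only controlling lower-order derivatives. Thus the proof of~\eqref{eq:garding_pre_3} should be essentially a one-line consequence of~\eqref{eq:pP_main} combined with rescaling.

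Concretely, I would introduce $f_{\{s\}}(\mathbf{x})=s^{-\mathbf{N}/2}f(\mathbf{x}/s)$ as in the proof of Corollary~\ref{coro:garding_pre}. The homogeneity of the Dunkl operators together with the change of variable~\eqref{eq:change_var} yields the scaling identity
\begin{equation*}
\|T^{\beta}f_{\{s\}}\|^2_{L^2(\eta^2(\cdot)\,dw)}=s^{-2|\beta|}\|T^{\beta}f\|^2_{L^2(\eta^2(\cdot,s)\,dw)}
\end{equation*}
for every multi-index $\beta$, and analogously for $\|f_{\{s\}}\|^2_{L^2(\eta^2(\cdot)\,dw)}$ (with $|\beta|=0$). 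Applying~\eqref{eq:pP_main} to $f_{\{s\}}$ for a fixed multi-index $\beta$ with $|\beta|=\ell$, and multiplying through by $s^{2\ell}$, I would obtain
\begin{equation*}
\|T^{\beta}f\|^2_{L^2(\eta^2(\cdot,s)\,dw)} \leq C_{\beta}\Bigl(\sum_{j=1}^{m}\|T^{\ell}_{\zeta_j}f\|^2_{L^2(\eta^2(\cdot,s)\,dw)}+s^{2\ell}\|f\|^2_{L^2(\eta^2(\cdot,s)\,dw)}\Bigr).
\end{equation*}
Summing over $|\beta|=\ell$ (which is a finite sum) gives the same inequality for the full sum with a single constant $C$.

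The final step is to replace the weight $\eta^2(\mathbf{x},s)$ by $\eta(\mathbf{x},s)$ (i.e.~to pass from the $L^2(\eta^2(\cdot,s)\,dw)$ norm to the $\mathcal{H}_s$ norm). I would do this by running the whole argument above with $s$ replaced by $s/2$ (legitimate whenever $s>1/2$, and on the bounded range $1/4<s\leq 1/2$ the weights are all comparable up to absolute constants so~\eqref{eq:pP_main} applies directly), and then invoking~\eqref{eq:comparison1}, which says exactly that $\int |f|^2\eta^2(\mathbf{x},s/2)\,dw$ is equivalent to $\|f\|^2_{\mathcal{H}_s}$. Since there is no need for a small parameter $\delta$ in the top-order estimate~\eqref{eq:garding_pre_3}, no induction is required, and I do not foresee any real obstacle beyond bookkeeping the constants through the rescaling.
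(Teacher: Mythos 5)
Your proposal is correct and follows exactly the paper's route: the paper proves Corollary~\ref{coro:garding_pre_1} by repeating the scaling argument of Corollary~\ref{coro:garding_pre} with~\eqref{eq:pP_main} in place of~\eqref{eq:pP}, then passing to the $\mathcal{H}_s$ norm via~\eqref{eq:comparison1}. (Your caution about the range $1/4<s\leq 1/2$ is unnecessary but harmless, since~\eqref{eq:pP_main} involves the fixed weight $\eta^2(\cdot,1)$ and the rescaling identity holds for every positive dilation parameter.)
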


\begin{proof}
The proof is the same as the proof of Corollary~\ref{coro:garding_pre}, but instead of~\eqref{eq:pP} we use~\eqref{eq:pP_main}.
\end{proof}
\subsection{Weighted Sobolev spaces}\label{weight_sobol} For $s>0$ we define the weighted Sobolev space $V_{\ell,s}$ as the completion of $C_c^\infty(\mathbb R^N)$-functions in the norm 
$$ \| f\|^2_{V_{\ell,s}}=\| f\|^2_{\mathcal H_s}+\sum_{j=1}^m \| T_{\zeta_j}^\ell f\|_{\mathcal H_s}^2.$$
Clearly, $V_{\ell,s}\subset \mathcal H_s$. Moreover, $V_{\ell,s}$ is a dense subspace of $\mathcal H_s$. 

\begin{propo}\label{propo:definition_equivalence} Assume that  $f\in\mathcal H_s$. Then the following statements are equivalent: 

\begin{enumerate}[(a)]
    \item{$f \in V_{\ell,s}$; }\label{numitem:a_def}
    \item{for any $\beta \in \mathbb{N}_0^{N}$ such that $|\beta| \leq \ell$ there is a function $f_{\beta,s}\in \mathcal H_s$ such that for every $\varphi \in C^{\infty}_c(\mathbb{R}^N)$ we have 
\begin{equation}\label{eq:weak_der}
     (-1)^{|\beta|} \int_{\mathbb{R}^N} f(\mathbf x)T^\beta \varphi(\mathbf x)\, dw(\mathbf x)=\int_{\mathbb{R}^N} f_{\beta,s}(\mathbf x)\varphi(\mathbf x)\, dw(\mathbf x). 
     \end{equation} }\label{numitem:b_def}
\end{enumerate}
\end{propo}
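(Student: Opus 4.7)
The proposition is the standard identification, in the Dunkl/weighted setting, of the Sobolev space $V_{\ell,s}$ defined by completion with the one defined by weak derivatives. The preliminary observation that drives everything is that although the $V_{\ell,s}$-norm only involves $T_{\zeta_j}^{\ell}$, Corollary~\ref{coro:garding_pre_1} (combined with Corollary~\ref{coro:garding_pre} applied with any fixed $\delta$) yields the two-sided estimate
\begin{equation*}
\|f\|_{\mathcal H_s}^{2}+\sum_{|\beta|\le\ell}\|T^{\beta}f\|_{\mathcal H_s}^{2}\ \sim_{s,\ell}\ \|f\|_{V_{\ell,s}}^{2}\qquad(f\in C_{c}^{\infty}(\mathbb R^{N})),
\end{equation*}
so the $V_{\ell,s}$-norm already controls every $T^{\beta}$-derivative of order $\le\ell$.

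For (a)$\Rightarrow$(b) I would pick $f_n\in C_c^\infty(\mathbb R^N)$ with $f_n\to f$ in $V_{\ell,s}$. The equivalence above, applied to $f_n-f_k$, shows that $\{T^{\beta}f_n\}$ is Cauchy in $\mathcal H_s$ for every $|\beta|\le\ell$, so its limit defines a candidate $f_{\beta,s}\in\mathcal H_s$. Since the Dunkl operators commute and are skew-symmetric with respect to $dw$, for every $\varphi\in C_c^\infty(\mathbb R^N)$ one has
\begin{equation*}
\int_{\mathbb R^N}(T^{\beta}f_n)\,\varphi\,dw=(-1)^{|\beta|}\int_{\mathbb R^N}f_n\,T^{\beta}\varphi\,dw,
\end{equation*}
and passing to the limit (allowed because $\mathcal H_s\hookrightarrow L^{2}_{\mathrm{loc}}(dw)$ and because the equivalence above gives $T^{\beta}f_n\to f_{\beta,s}$ in $\mathcal H_s$) produces \eqref{eq:weak_der}.

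For (b)$\Rightarrow$(a) I would introduce the auxiliary Hilbert space $W_{\ell,s}$ consisting of those $f\in\mathcal H_s$ satisfying (b), normed by $\|f\|_{W_{\ell,s}}^{2}=\sum_{|\beta|\le\ell}\|f_{\beta,s}\|_{\mathcal H_s}^{2}$, and establish that $C_c^\infty(\mathbb R^N)$ is dense in $W_{\ell,s}$; combined with (a)$\Rightarrow$(b) this forces $V_{\ell,s}=W_{\ell,s}$. Density I would obtain in two standard steps. \emph{Truncation}: with a radial cutoff $\chi_R(\mathbf x)=\chi(\mathbf x/R)$, $\chi\in C_c^\infty(\mathbb R^N)$, $\chi\equiv 1$ on $B(0,1)$, the rule~\eqref{eq:Leibniz} applies directly at the first step, and subsequent iterations are handled by Lemma~\ref{lem:derivative_formula} (the full difference-operator Leibniz rule is needed because $\partial^{\alpha}\chi_{R}$ is not $G$-invariant once $|\alpha|\ge 1$); this writes $T^{\beta}(\chi_{R}f)=\chi_{R}T^{\beta}f+\mathrm{Rem}_{R}$, with every remainder term carrying a factor $\partial^{\alpha}\chi_{R}$ of size $O(R^{-|\alpha|})$ supported in the $G$-invariant annulus $R\le\|\mathbf x\|\le 2R$. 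Because $\eta(\,\cdot\,,s)$ is radial and each $f_{\beta',s}\in\mathcal H_s$, dominated convergence yields $\chi_{R}f\to f$ in $W_{\ell,s}$. \emph{Mollification}: for a radial $\psi_{\varepsilon}\in C_c^\infty(\mathbb R^N)$ forming an $L^{1}(dw)$-approximate identity, the Dunkl convolution $f*\psi_{\varepsilon}$ is smooth, has compact support when $f$ does (by R\"osler's formula~\eqref{eq:translation-radial}, which confines $\tau_{\mathbf x}\psi_{\varepsilon}$ to the $\varepsilon$-neighbourhood of $\mathcal O(\mathbf x)$), satisfies the classical identity $T^{\beta}(f*\psi_{\varepsilon})=f_{\beta,s}*\psi_{\varepsilon}$ as a direct consequence of~\eqref{eq:weak_der}, and converges to $f$ in $\mathcal H_s$ via~\eqref{eq:translation-bounded} and a standard approximate-identity argument. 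Composing the two steps produces the required $C_c^\infty$-approximation in $W_{\ell,s}$, hence in $V_{\ell,s}$.

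The principal obstacle is the truncation step for $|\beta|\ge 2$: the Dunkl operators fail to obey a clean Leibniz rule against a non-$G$-invariant multiplier, so iterating Lemma~\ref{lem:derivative_formula} produces many reflection terms that must be tracked. The redemption is that every such term inherits at least one derivative of $\chi_{R}$, which localises it in $\{R\le\|\mathbf x\|\le 2R\}$ with amplitude $O(R^{-|\alpha|})$; together with the $G$-invariance (radiality) of $\eta(\,\cdot\,,s)$ this forces all error terms to vanish in $\mathcal H_{s}$ as $R\to\infty$.
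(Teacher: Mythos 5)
Your direction (a)$\Rightarrow$(b) is exactly the paper's argument: take a $C_c^\infty$ sequence Cauchy in $V_{\ell,s}$, use Corollaries~\ref{coro:garding_pre} and~\ref{coro:garding_pre_1} to see that all $T^\beta f_n$, $|\beta|\le\ell$, are Cauchy in $\mathcal H_s$, and pass to the limit in the integration-by-parts identity. (The paper additionally checks that two Cauchy sequences with the same $\mathcal H_s$-limit yield the same weak derivatives, i.e.\ that $V_{\ell,s}$ embeds injectively into $\mathcal H_s$; you skip this, but it is a set-up point rather than a substantive one.) For (b)$\Rightarrow$(a) you use the same two devices as the paper --- a radial cutoff and a radial Dunkl mollifier, with the support of $\tau_{\mathbf x}\Phi_{1/n}$ controlled via R\"osler's formula --- but in the opposite order: you truncate first and mollify second, whereas the paper forms $f_n=\Psi(\cdot/n)\,(\Phi_{1/n}*f)$ in a single step, mollifying \emph{before} the cutoff is differentiated.

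This reordering is where your write-up has a gap. At the truncation stage your $f$ is only an element of $\mathcal H_s$ possessing weak Dunkl derivatives; it is not a $C^{\ell}$ function. The identities \eqref{eq:Leibniz} and Lemma~\ref{lem:derivative_formula}, which you invoke to expand $T^\beta(\chi_R f)$, are pointwise statements proved for $f\in C^1(\mathbb R^N)$ (and their iteration needs higher classical smoothness), so they do not apply as stated. To make your step rigorous you would have to re-derive the Leibniz expansion at the level of weak derivatives by testing against $\varphi\in C_c^\infty$; in the Dunkl setting this is not a one-line duality, because the correction terms involve the compositions $f\circ\sigma$, $\sigma\in G$, whose weak differentiability must itself be established via \eqref{eq:derivative_to_reflected}, forcing an induction on $|\beta|$. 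The paper's ordering avoids all of this: $\Phi_{1/n}*f$ is already $C^\infty$, so the Leibniz iteration (Proposition~\ref{pro:beta}) is a legitimate pointwise computation, and the weak derivatives of $f$ enter only through the single clean identity $T^{\beta'}(\Phi_{1/n}*f)=\Phi_{1/n}*f_{\beta',s}$, after which Lemma~\ref{lem:convolution_bounded} supplies the uniform $\mathcal H_s$-bounds and the convergence. Either supply the weak Leibniz rule (with the induction over reflections), or swap your two steps back to mollify-then-truncate; the rest of your argument, including the decay of the remainder terms on the annulus and the use of the $G$-invariance of $\eta(\cdot,s)$, is sound.
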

\begin{proof}
See Appendix~\ref{sec:definition_equivalence}. 
\end{proof}
\begin{remark}If $0<s_1<s_2$ and $f \in V_{\ell,s_2}$, then $f \in V_{\ell,s_1}$ and the functions $f_{\beta,s_1}$ and $f_{\beta,s_2}$ from Proposition~\ref{propo:definition_equivalence} coincide. They will be denoted by $T^{\beta}f$.
\end{remark}

\subsection{Bilinear forms}
\begin{defn}\label{def:aaaa}\normalfont
For $s>1/4$ we define the bilinear form $a_{s}(\cdot,\cdot)$ with the domain $V_{\ell,s}$ by 
$$a_{s}(f,g)=- \sum_{j=1}^{m}\int_{\mathbb{R}^N} T_{\zeta_j}^{\ell}f(\mathbf{x})T_{\zeta_j}^{\ell}\big\{\overline{g}(\mathbf{x})\eta (\mathbf{x}, s)\big\}\,dw(\mathbf{x}).$$
\end{defn}

\begin{propo}\label{propo:form}
The form $a_{s}(f,g)$ is bounded on $V_{\ell,s}$. More precisely, there is a  constant $C>0$ such that   for every $s>1/4$ and every $f,g\in V_{\ell,s}$ we have 
\begin{equation}
    |a_{s}(f,g)|\leq C\Big(s^{2\ell} \| f\|_{\mathcal H_s}^2 + \sum_{j=1}^m \| T_{\zeta_j}^\ell  f\|_{\mathcal H_s}^2 \Big)^{1\slash 2}\Big(s^{2\ell}  \| g\|_{\mathcal H_s}^2 + \sum_{j=1}^m \| T_{\zeta_j}^\ell  g\|_{\mathcal H_s}^2 \Big)^{1\slash 2}.
\end{equation}
\end{propo}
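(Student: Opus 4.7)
The plan is to expand $T_{\zeta_j}^\ell\{\bar g(\mathbf x)\eta(\mathbf x,s)\}$ via a Leibniz-type formula into a principal term $T_{\zeta_j}^\ell\bar g\cdot\eta$ plus lower-order remainders, and then to bound everything against the two factors on the right-hand side using Cauchy--Schwarz together with the G{\aa}rding-type estimate already proved in Corollary~\ref{coro:garding_pre}.

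First I would establish the expansion
\begin{equation*}
T_{\zeta_j}^\ell(\bar g\cdot\eta(\cdot,s))(\mathbf x)=T_{\zeta_j}^\ell\bar g(\mathbf x)\,\eta(\mathbf x,s)+\sum_{\sigma\in G}\sum_{|\beta'|<\ell}s^{\ell-|\beta'|}(T^{\beta'}\bar g)(\sigma(\mathbf x))\,\psi_{j,\beta',\sigma}(\mathbf x,s),
\end{equation*}
where each coefficient $\psi_{j,\beta',\sigma}$ satisfies~\eqref{eq:phi_assumption}, so in particular $|\psi_{j,\beta',\sigma}(\mathbf x,s)|\leq C\eta(\mathbf x,s)$. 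This identity follows either by iterating Lemma~\ref{lem:derivative_formula} $\ell$ times directly for the operator $T_{\zeta_j}$, or equivalently by writing $T_{\zeta_j}^\ell=\sum_{|\beta|=\ell}c_{\beta,j}T^{\beta}$ (which is legitimate because the Dunkl operators commute) and then applying Proposition~\ref{pro:beta} termwise.

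Plugging this into the definition of $a_s(f,g)$, the principal contribution is controlled, by Cauchy--Schwarz with weight $\eta(\cdot,s)$ followed by another Cauchy--Schwarz over $j$, by $\bigl(\sum_j\|T_{\zeta_j}^\ell f\|_{\mathcal H_s}^2\bigr)^{1/2}\bigl(\sum_j\|T_{\zeta_j}^\ell g\|_{\mathcal H_s}^2\bigr)^{1/2}$. For each remainder term I would apply Cauchy--Schwarz using $|\psi|\leq C\eta(\cdot,s)$ and then change variables $\mathbf y=\sigma(\mathbf x)$; since $dw$ is $G$-invariant (by $G$-invariance of $k$) and $\eta(\cdot,s)$ is $G$-invariant (because $\|\sigma(\mathbf x)\|=\|\mathbf x\|$), this yields
\begin{equation*}
s^{\ell-|\beta'|}\Big|\int_{\mathbb R^N} T_{\zeta_j}^\ell f(\mathbf x)\,(T^{\beta'}\bar g)(\sigma(\mathbf x))\,\psi(\mathbf x,s)\,dw(\mathbf x)\Big|\leq C\|T_{\zeta_j}^\ell f\|_{\mathcal H_s}\cdot s^{\ell-|\beta'|}\|T^{\beta'}g\|_{\mathcal H_s}.
\end{equation*}
For $1\leq|\beta'|<\ell$, Corollary~\ref{coro:garding_pre} with $n=|\beta'|$ and $\delta=1$ bounds $s^{\ell-|\beta'|}\|T^{\beta'}g\|_{\mathcal H_s}$ by $C\bigl(\sum_j\|T_{\zeta_j}^\ell g\|_{\mathcal H_s}^2+s^{2\ell}\|g\|_{\mathcal H_s}^2\bigr)^{1/2}$, while for $|\beta'|=0$ the factor $s^\ell\|g\|_{\mathcal H_s}$ is already of the desired form. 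Summing over $j,\sigma,\beta'$ and applying one final Cauchy--Schwarz in $j$ produces the advertised estimate.

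The main obstacle is really the expansion: one has to verify that an $\ell$-fold iteration of the Dunkl-type Leibniz rule preserves the class of coefficients satisfying~\eqref{eq:phi_assumption} with the correct $s$-weights. Proposition~\ref{pro:beta} already does this for the product operators $T^{\beta}$, and since the Dunkl operators commute the binomial expansion of $T_{\zeta_j}^\ell=(\sum_ic_{ij}T_i)^\ell$ supplies the formula. After that every step is a routine Cauchy--Schwarz argument combined with the already established G{\aa}rding-type inequality.
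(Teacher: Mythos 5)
Your proposal is correct and follows essentially the same route as the paper: expand $T_{\zeta_j}^\ell(\bar g\,\eta(\cdot,s))$ via the iterated Leibniz formula of Proposition~\ref{pro:beta} (the paper applies it directly to $T_{\zeta_j}^\ell$, which your remark about writing $T_{\zeta_j}^\ell$ as a combination of the $T^\beta$ with $|\beta|=\ell$, or iterating Lemma~\ref{lem:derivative_formula}, justifies), then Cauchy--Schwarz on each term and Corollary~\ref{coro:garding_pre} to absorb $s^{\ell-|\beta'|}\|T^{\beta'}g\|_{\mathcal H_s}$. No gaps; your explicit use of the $G$-invariance of $dw$ and $\eta$ to handle the reflected terms is exactly what the paper uses implicitly.
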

\begin{proof}
{By Proposition \ref{pro:beta} there are functions $\phi_{j,\beta',\sigma}(\mathbf x,s)$, $\beta' \in \mathbb{N}_0^{N}$ and $\sigma \in G$,  such that 
\begin{align*}
    |\phi_{j,\beta',\sigma}(\mathbf x,s)|\leq C_{j,\beta',\sigma}\eta(\mathbf x,s) \text{ for all }\mathbf{x} \in \mathbb{R}^N,
\end{align*}
and 
\begin{equation*}
    \begin{split}
       |a_{s}(f,g)| & \leq  \Big|\sum_{j=1}^m \int_{\mathbb{R}^N} T_{\zeta_j}^\ell f(\mathbf x)T_{\zeta_j}^\ell \overline{g}(\mathbf x)\eta(\mathbf x,s)dw(\mathbf x)\Big|\\
       &\quad + \Big|\sum_{j=1}^m\sum_{\sigma\in G}\sum_{|\beta'|<\ell} \int_{\mathbb{R}^N} T_{\zeta_j}^\ell f(\mathbf x) s^{\ell-|\beta'|}(T^{\beta'}\overline{g})(\sigma(\mathbf x)) \phi_{j,\beta',\sigma}(\mathbf x,s)dw(\mathbf x) \Big|.
    \end{split}
\end{equation*}
Hence, using the Cauchy-Schwarz inequality we obtain 
$$ |a_{s}(f,g)| \leq     \sum_{j=1}^m \|T_{\zeta_j}^{\ell} f\|_{\mathcal H_s}\|T_{\zeta_j}^{\ell} g\|_{\mathcal H_s}+ C \sum_{j=1}^m \sum_{|\beta'|<\ell } \| T_{\zeta_j}^{\ell}  f\|_{\mathcal H_s} s^{\ell-|\beta'|} \| T^{\beta'} g\|_{\mathcal H_s}, $$
Now, applying  ~\eqref{eq:garding_pre_2} we get 
\begin{equation}\begin{split}\label{eq:bbound}
       |a_{s}(f,g)| &  
       \leq  \sum_{j=1}^m \|T_{\zeta_j}^{\ell} f\|_{\mathcal H_s}\|T_{\zeta_j}^{\ell} g\|_{\mathcal H_s} + C \Big(\sum_{j=1}^m \| T_{\zeta_j}^{\ell}  f\|_{\mathcal H_s}\Big) \Big(\sum_{j=1}^m  \| T_{\zeta_j}^{\ell} g\|_{\mathcal H_s}+s^{\ell}\| g\|_{\mathcal H_s}\Big). 
    \end{split}
\end{equation}
The proposition is a direct consequence of \eqref{eq:bbound}. }
\end{proof}

\begin{propo}[G\aa rding inequality]\label{propo:Garding2}
There are constants $\alpha, C_{\alpha}>0$ such that for all $s>1/4$ and $f \in V_{\ell,s}$ we have
\begin{align*}
-{\rm Re}\, a_{s}(f,f)+C_{\alpha}s^{2\ell}\|f\|_{\mathcal H_s}^2 \geq \alpha \|f\|_{V_{\ell,s}}^2.
\end{align*}
\end{propo}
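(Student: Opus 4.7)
The plan is to expand $T_{\zeta_j}^{\ell}\{\overline f(\mathbf x)\eta(\mathbf x,s)\}$ exactly as in the proof of Proposition~\ref{propo:form}. Iterating Lemma~\ref{lem:derivative_formula} $\ell$ times with $\zeta=\zeta_j$ (the same mechanism that produced Proposition~\ref{pro:beta}) yields
$$T_{\zeta_j}^\ell\{\overline f(\mathbf x)\eta(\mathbf x,s)\}=T_{\zeta_j}^\ell\overline f(\mathbf x)\,\eta(\mathbf x,s)+\sum_{\sigma\in G}\sum_{|\beta'|<\ell}s^{\ell-|\beta'|}(T^{\beta'}\overline f)(\sigma(\mathbf x))\,\phi_{j,\beta',\sigma}(\mathbf x,s),$$
with $|\phi_{j,\beta',\sigma}(\mathbf x,s)|\leq C\eta(\mathbf x,s)$. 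Substituting into $-a_s(f,f)$ and using $T_{\zeta_j}^\ell\overline f=\overline{T_{\zeta_j}^\ell f}$ (the multiplicity function is real), the leading contribution becomes $\sum_{j=1}^m\|T_{\zeta_j}^\ell f\|_{\mathcal H_s}^2$, which is already real and nonnegative; let $R$ denote the remainder indexed by $\sigma$ and $\beta'$.

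Next I would estimate $|R|$ term-by-term. Bounding $|\phi_{j,\beta',\sigma}|\leq C\eta(\cdot,s)$, applying Cauchy--Schwarz, and using the $G$-invariance of both $dw$ and $\eta(\cdot,s)$ (which depends only on $\|\mathbf x\|$) to substitute $\mathbf x\mapsto\sigma^{-1}(\mathbf x)$ in the factor $(T^{\beta'}f)(\sigma(\mathbf x))$, I obtain
$$|R|\leq C\sum_{j=1}^m\sum_{|\beta'|<\ell}s^{\ell-|\beta'|}\,\|T_{\zeta_j}^\ell f\|_{\mathcal H_s}\,\|T^{\beta'}f\|_{\mathcal H_s}.$$
A Young inequality $ab\leq\varepsilon a^2+(4\varepsilon)^{-1}b^2$ then gives, for any $\varepsilon>0$,
$$|R|\leq \varepsilon\sum_{j=1}^m\|T_{\zeta_j}^\ell f\|_{\mathcal H_s}^2+C_\varepsilon\sum_{n<\ell}s^{2(\ell-n)}\!\!\sum_{|\beta'|=n}\|T^{\beta'}f\|_{\mathcal H_s}^2.$$

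To dispose of the lower-order sums I would invoke Corollary~\ref{coro:garding_pre}, inequality~\eqref{eq:garding_pre_2}, with a small parameter $\delta$: each $s^{2(\ell-n)}\!\sum_{|\beta'|=n}\|T^{\beta'}f\|_{\mathcal H_s}^2$ is dominated by $\delta\sum_j\|T_{\zeta_j}^\ell f\|_{\mathcal H_s}^2+C_\delta s^{2\ell}\|f\|_{\mathcal H_s}^2$. Choosing $\varepsilon$ and $\delta$ small enough that the total coefficient in front of $\sum_j\|T_{\zeta_j}^\ell f\|_{\mathcal H_s}^2$ from $|R|$ is at most $1/2$, and absorbing, I obtain
$$-{\rm Re}\,a_s(f,f)\geq \tfrac12\sum_{j=1}^m\|T_{\zeta_j}^\ell f\|_{\mathcal H_s}^2-C's^{2\ell}\|f\|_{\mathcal H_s}^2.$$
Finally, adding $\tfrac12\|f\|_{\mathcal H_s}^2$ to the right-hand side and using $s\geq 1/4$ (so that $\|f\|_{\mathcal H_s}^2\leq 4^{2\ell}s^{2\ell}\|f\|_{\mathcal H_s}^2$), we conclude the G\aa rding inequality with $\alpha=1/2$ and $C_\alpha=C'+\tfrac12\cdot 4^{2\ell}$.

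The only genuine obstacle is bookkeeping: the parameters $\varepsilon,\delta,C_\varepsilon,C_\delta$ must be chosen in the right order so that the absorption of $\sum_j\|T_{\zeta_j}^\ell f\|_{\mathcal H_s}^2$ on the right-hand side is legitimate and the residual mass lands in $s^{2\ell}\|f\|_{\mathcal H_s}^2$. The derivative-expansion step and the change-of-variables trick are both routine, the former being essentially already carried out in Proposition~\ref{propo:form}.
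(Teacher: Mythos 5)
Your proposal is correct and follows essentially the same route as the paper: expand $T_{\zeta_j}^{\ell}(\overline f\,\eta(\cdot,s))$ via Proposition~\ref{pro:beta}, isolate the nonnegative leading term $\sum_j\|T_{\zeta_j}^\ell f\|_{\mathcal H_s}^2$, bound the remainder by Cauchy--Schwarz, and absorb the lower-order terms using \eqref{eq:garding_pre_2} with suitably ordered small parameters. The only cosmetic difference is that you apply Young's inequality before invoking Corollary~\ref{coro:garding_pre} while the paper does it in the opposite order; both choices of parameter bookkeeping are legitimate.
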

\begin{proof}
Similarly to the proof of Proposition \ref{propo:form}, applying Proposition \ref{pro:beta},  we have
\begin{equation}
\begin{split}\label{eq:form_below}
    -  & {\rm Re}\, a_{s}(f,f)\geq \sum_{j=1}^m  \int_{\mathbb{R}^N} T_{\zeta_j}^\ell f(\mathbf x)T_{\zeta_j}^\ell \overline{f}(\mathbf x)\eta(\mathbf x,s)dw(\mathbf x) \\
    &\quad - \Big|\sum_{j=1}^m\sum_{\sigma\in G}\sum_{|\beta|<\ell} \int_{\mathbb{R}^N} T_{\zeta_j}^\ell f(\mathbf x) s^{\ell-|\beta|}(T^{\beta}\overline{f})(\sigma(\mathbf x)) \phi_{j,\beta,\sigma}(\mathbf x,s)dw(\mathbf x) \Big|\\
&\geq     \sum_{j=1}^m \|T_{\zeta_j}^{\ell} f\|_{\mathcal H_s}^2 - C \sum_{j=1}^m \sum_{|\beta|<\ell } \| T_{\zeta_j}^{\ell}  f\|_{\mathcal H_s} s^{\ell-|\beta|} \| T^{\beta} f\|_{\mathcal H_s}.\\
\end{split}\end{equation}
Using \eqref{eq:garding_pre_2} and the Cauchy-Schwarz inequality for any $\delta>0$ there is a constant $C_\delta'>0$ such that for any $\varepsilon >0$ we have  
\begin{equation}\begin{split}\label{eq:form_blow1}
C \sum_{j=1}^m & \sum_{|\beta|<\ell } \| T_{\zeta_j}^{\ell}  f\|_{\mathcal H_s} s^{\ell-|\beta|} \| T^{\beta} f\|_{\mathcal H_s} \\
&\leq  C \Big(\sum_{j=1}^m \| T_{\zeta_j}^{\ell}  f\|_{\mathcal H_s}\Big) \Big(\delta \sum_{j=1}^m \| T_{\zeta_j}^{\ell}  f\|_{\mathcal H_s} + C'_\delta s^{\ell} \| f\|_{\mathcal H_s}\Big)\\
&\leq C\delta m\sum_{j=1}^m \| T_{\zeta_j}^\ell f\|^2_{\mathcal H_s}+CC_\delta' \Big(\sum_{j=1}^m \| T_{\zeta_j}^\ell f\|_{\mathcal H_s} s^{\ell} \| f\|_{\mathcal H_s}\Big)\\
&\leq  C\delta m\sum_{j=1}^m \| T_{\zeta_j}^\ell f\|^2_{\mathcal H_s} +CC_\delta' \Big(\sum_{j=1}^m \varepsilon \| T_{\zeta_j}^\ell f\|_{\mathcal H_s}^2+ \frac{ms^{2\ell}}{4\varepsilon} \| f\|^2_{\mathcal H_s}\Big).
\end{split}\end{equation}
Taking $\delta, \varepsilon >0$ small enough such that $ C\delta m+CC_\delta'\varepsilon <1\slash 2$  we conclude the proposition from \eqref{eq:form_below} and \eqref{eq:form_blow1}. 
\end{proof}

\section{Perturbations of the bilinear form}
For $\varepsilon \geq 0$  and $s>1/4$ we consider the following bilinear form 
$$ b_{s,\varepsilon}(f,g)=a_{s}(f,g) +\varepsilon \sum_{j=1}^N \int_{\mathbb{R}^N} T_{j}f(\mathbf x)T_{j} \{ \overline{g}(\cdot)\eta(\cdot ,s)\}(\mathbf x)\, dw(\mathbf x)$$
 with the domain $V_{\ell,s}$. Let us note that $b_{s,0}(f,g)=a_{s}(f,g)$. 

\begin{propo}\label{propo:form_perturbation}
For every $\varepsilon \geq 0$ and $s>1\slash 4$ the form $b_{s,\varepsilon}$ is bounded on $V_{\ell,s}$.
\end{propo}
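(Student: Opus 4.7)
Since $b_{s,\varepsilon}=a_{s}+\varepsilon Q$, where
\begin{equation*}
Q(f,g):=\sum_{j=1}^{N}\int_{\mathbb{R}^{N}}T_{j}f(\mathbf{x})\,T_{j}\{\overline{g}(\cdot)\eta(\cdot,s)\}(\mathbf{x})\,dw(\mathbf{x}),
\end{equation*}
and Proposition~\ref{propo:form} already handles $a_{s}$, my plan is to show that $|Q(f,g)|\leq C(s)\|f\|_{V_{\ell,s}}\|g\|_{V_{\ell,s}}$ with a constant depending on $s$. The argument parallels the proof of Proposition~\ref{propo:form}, but applied to a first-order operator, and the tools already assembled in Section~3 make it essentially mechanical.

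The first step is to apply Proposition~\ref{pro:beta} with $\beta=e_{j}$. Since $|\beta|=1$ the inner sum over $|\beta'|<|\beta|$ collapses to $\beta'=0$, giving
\begin{equation*}
T_{j}\{\overline{g}(\cdot)\eta(\cdot,s)\}(\mathbf{x})=(T_{j}\overline{g})(\mathbf{x})\,\eta(\mathbf{x},s)+s\sum_{\sigma\in G}\overline{g}(\sigma(\mathbf{x}))\,\phi_{e_{j},0,\sigma}(\mathbf{x},s),
\end{equation*}
with $|\phi_{e_{j},0,\sigma}(\mathbf{x},s)|\lesssim\eta(\mathbf{x},s)$. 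I then apply the Cauchy--Schwarz inequality to each of the $1+|G|$ resulting integrals. Because both $dw$ and $\eta(\cdot,s)$ are $G$-invariant (the former since $\sigma(R)=R$ and $k$ is $G$-invariant, the latter since $\sigma$ is orthogonal), the change of variables $\mathbf{y}=\sigma(\mathbf{x})$ yields $\|g(\sigma(\cdot))\|_{\mathcal{H}_{s}}=\|g\|_{\mathcal{H}_{s}}$, so the reflected contributions are absorbed and one obtains
\begin{equation*}
|Q(f,g)|\leq C\sum_{j=1}^{N}\|T_{j}f\|_{\mathcal{H}_{s}}\bigl(\|T_{j}g\|_{\mathcal{H}_{s}}+s\,\|g\|_{\mathcal{H}_{s}}\bigr).
\end{equation*}

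The remaining step is to dominate each $\|T_{j}f\|_{\mathcal{H}_{s}}$ by the $V_{\ell,s}$-norm. When $\ell=1$ this is immediate from Corollary~\ref{coro:garding_pre_1} and the definition of $V_{\ell,s}$. When $\ell\geq 2$, I apply Corollary~\ref{coro:garding_pre} with $n=1$ and $\delta=1$, then divide through by $s^{2(\ell-1)}$; since $s>1/4$ one has $s^{-2(\ell-1)}\leq 4^{2(\ell-1)}$, so in either case
\begin{equation*}
\sum_{j=1}^{N}\|T_{j}f\|_{\mathcal{H}_{s}}^{2}\leq C(s)\|f\|_{V_{\ell,s}}^{2},
\end{equation*}
and the same bound holds with $f$ replaced by $g$. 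Combining these estimates with the previous display, and with the bound from Proposition~\ref{propo:form} for the $a_{s}$ piece, yields a bound of the form $|b_{s,\varepsilon}(f,g)|\leq C(s,\varepsilon)\|f\|_{V_{\ell,s}}\|g\|_{V_{\ell,s}}$, which is the claim. No genuine obstacle appears; the only point needing attention is the use of $G$-invariance to deal with the reflected terms coming out of Proposition~\ref{pro:beta}.
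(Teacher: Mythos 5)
Your proof is correct and follows essentially the same route as the paper: expand $T_j\{\overline g(\cdot)\,\eta(\cdot,s)\}$ by a Leibniz-type formula, apply the Cauchy--Schwarz inequality, and control $\sum_{j=1}^N\|T_jf\|_{\mathcal H_s}^2$ by $C(s)\|f\|_{V_{\ell,s}}^2$ via Corollaries~\ref{coro:garding_pre} and~\ref{coro:garding_pre_1}. The only cosmetic difference is that the paper invokes Lemma~\ref{lem:derivative_formula_radial}, which exploits the radiality of $\eta$ and so produces no reflected terms, whereas your use of Proposition~\ref{pro:beta} introduces the terms $\overline g(\sigma(\mathbf x))$, which you then correctly absorb using the $G$-invariance of $dw$ and of $\eta(\cdot,s)$.
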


\begin{proof} Thanks to \eqref{eq:garding_pre_2} and \eqref{eq:garding_pre_3} there is a constant $C>0$ such for all $s>1/4$ we have $$ \sum_{j=1}^N\| T_{j} f\|^2_{\mathcal H_s}\leq C(\| f\|_{V_{\ell,s}}^2+ s^{2\ell} \| f\|^2_{\mathcal H_s}).$$
Hence, using  Lemma~\ref{lem:derivative_formula_radial} and then either \eqref{eq:garding_pre_2} or \eqref{eq:garding_pre_3}, we obtain  
\begin{equation}\begin{split}\label{eq:sum2}
    \Big|&\sum_{j=1}^N \int_{\mathbb{R}^N} T_{j}f(\mathbf x)T_{j} \{ \overline{g}(\cdot)\eta(\cdot ,s)\}(\mathbf x)\, dw(\mathbf x)\Big| \\
    &\leq \sum_{j=1}^N\| T_{j}f\|_{\mathcal H_s}\| T_{j}g\|_{\mathcal H_s}
     +C \sum_{j=1}^N\| T_{j}f\|_{\mathcal H_s}s\| g\|_{\mathcal H_s}\\
     &\leq C\Big( \| f\|^2_{ V_{\ell,s}} +s^{2\ell} \| f\|^2_{\mathcal H_s}\Big)^{1\slash 2} \Big( \| g\|^2_{ V_{\ell,s}} +s^{2\ell} \| g\|^2_{\mathcal H_s}\Big)^{1\slash 2}. 
\end{split}\end{equation}
 Now Proposition \ref{propo:form_perturbation} follows from \eqref{eq:sum2} and  Proposition~\ref{propo:form}. 
\end{proof} 
\begin{propo}[G{\aa}rding inequality for the perturbed bilinear form]\label{propo:garding_perturbed}
There are $\varepsilon_0>0$ and $\alpha, C_\alpha>0$ such that for all $0\leq \varepsilon\leq \varepsilon_0$, $f \in V_{\ell,s}$, and every $s>1\slash 4$ we have
\begin{align*}
-{\rm Re}\, b_{s,\varepsilon}(f,f)+C_{\alpha}s^{2\ell}\|f\|_{\mathcal H_s}^2 \geq \alpha \|f\|_{V_{\ell,s}}^2.
\end{align*}
\end{propo}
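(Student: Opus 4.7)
The plan is to deduce the claim from Proposition~\ref{propo:Garding2} (the G{\aa}rding inequality for the unperturbed form $a_{s}$) by absorbing the extra term $\varepsilon\sum_{j=1}^N \int T_j f\,\cdot\, T_j\{\overline{f}\eta(\cdot,s)\}\,dw$ into a small fraction of $\|f\|^2_{V_{\ell,s}}$ plus a multiple of $s^{2\ell}\|f\|^2_{\mathcal H_s}$, with constants independent of $s>1/4$. The choice to compare the perturbation against the $\ell$-th order norm is reasonable because the perturbation only contains first-order Dunkl operators, while $a_{s}$ controls the $\ell$-th order ones.

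Since $\eta(\cdot,s)$ is radial in $\mathbf x$ and satisfies~\eqref{eq:phi_assumption} by Lemma~\ref{lem:exp_derivatives}, the radial Leibniz rule of Lemma~\ref{lem:derivative_formula_radial} gives
\[
T_j\{\overline{f}\eta(\cdot,s)\}(\mathbf x)=\eta(\mathbf x,s)\overline{T_j f(\mathbf x)}+s\,\eta_{j,s}(\mathbf x,s)\overline{f(\mathbf x)},\qquad |\eta_{j,s}(\mathbf x,s)|\leq C\eta(\mathbf x,s).
\]
Consequently ${\rm Re}\int T_j f\cdot T_j\{\overline{f}\eta(\cdot,s)\}\,dw=\|T_j f\|^2_{\mathcal H_s}+s\,{\rm Re}\int T_j f\cdot \eta_{j,s}\overline{f}\,dw$, and the second summand is bounded by $C s\|T_j f\|_{\mathcal H_s}\|f\|_{\mathcal H_s}$ by Cauchy--Schwarz in $L^{2}(\eta(\cdot,s)\,dw)$. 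Applying AM--GM to this cross term and summing over $j$, the perturbation contribution to $-{\rm Re}\,b_{s,\varepsilon}(f,f)$ is at least $-\tfrac{3\varepsilon}{2}\sum_{j=1}^N \|T_j f\|^2_{\mathcal H_s}-C\varepsilon s^2\|f\|^2_{\mathcal H_s}$.

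To control $\sum_{j=1}^N \|T_j f\|^2_{\mathcal H_s}$, I invoke Corollary~\ref{coro:garding_pre} with $n=1$ when $\ell>1$ (after dividing through by $s^{2(\ell-1)}$ and using that $s^{-2(\ell-1)}$ stays bounded on $\{s>1/4\}$), and Corollary~\ref{coro:garding_pre_1} when $\ell=1$. In both cases one gets
\[
\sum_{j=1}^N \|T_j f\|^2_{\mathcal H_s}\leq K\sum_{j=1}^m \|T_{\zeta_j}^\ell f\|^2_{\mathcal H_s}+C_{*}s^{2\ell}\|f\|^2_{\mathcal H_s},
\]
with $K$ independent of $s$; here I also used $s^{2}\leq C_\ell\,s^{2\ell}$ for $s>1/4$. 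Combining with the previous step, Proposition~\ref{propo:Garding2}, and the trivial bound $\sum_j \|T_{\zeta_j}^\ell f\|^2_{\mathcal H_s}\leq \|f\|^2_{V_{\ell,s}}$, I arrive at
\[
-{\rm Re}\,b_{s,\varepsilon}(f,f)\geq \bigl(\alpha-\tfrac{3\varepsilon K}{2}\bigr)\|f\|^2_{V_{\ell,s}}-\widetilde{C}(\varepsilon)\,s^{2\ell}\|f\|^2_{\mathcal H_s}
\]
with $\widetilde{C}(\varepsilon)$ bounded as $\varepsilon\to 0$. Choosing $\varepsilon_0>0$ so small that $\tfrac{3\varepsilon_0 K}{2}\leq \alpha/2$ then yields the stated inequality, with $\alpha$ replaced by $\alpha/2$ and $C_{\alpha}:=\widetilde{C}(\varepsilon_0)$.

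The one delicate point is that the threshold $\varepsilon_0$ must be chosen uniformly in $s>1/4$. This is precisely what the $s$-dependent interpolation inequalities in Corollaries~\ref{coro:garding_pre} and~\ref{coro:garding_pre_1} are engineered to provide; without their explicit $s$-scaling, absorbing lower-order Dunkl derivatives into $T_{\zeta_j}^{\ell}$ would produce factors blowing up with $s$ and break the argument.
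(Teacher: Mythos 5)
Your argument is correct and is essentially the paper's: the paper also deduces the perturbed G{\aa}rding inequality from Proposition~\ref{propo:Garding2} by absorbing the $\varepsilon$-term, using the bound \eqref{eq:sum2} (whose derivation via Lemma~\ref{lem:derivative_formula_radial}, Cauchy--Schwarz, and Corollaries~\ref{coro:garding_pre} and \ref{coro:garding_pre_1} is exactly what you reproduce), and then choosing $\varepsilon_0$ small, uniformly in $s>1/4$. Your explicit tracking of the uniformity in $s$ is the right point to emphasize and matches the paper's intent.
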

\begin{proof} 
It suffices to take  $\varepsilon_0>0  $ small enough and apply  Proposition \ref{propo:Garding2} together with \eqref{eq:sum2}. 
\end{proof}

 The number $\varepsilon_0$ from Proposition \ref{propo:garding_perturbed}  will be fixed throughout the remaining part of the paper.
}

\section{Semigroups of operators and Lions theorem}
\subsection{Lions theorem}
The following theorem is essentially  due to J.-L. Lions~\cite{Lions}. Its proof, which includes holomorphy of the semigroup under consideration, and which is a combination of a number of propositions from~\cite{Lions} and~\cite{Pazy}, can be found in~\cite[Proposition (1.1)]{DHul}.

\begin{teo}\label{teo:Lions}
Let $\mathcal{H}$ be a Hilbert space and $V$ be a dense subspace of $\mathcal{H}$ such that $V$ is a Hilbert space with the inner product $\langle \cdot,\cdot \rangle_{V}$ and the norm $\|\cdot\|_{V}$, and for some constant $c>0$ we have $\|f\|_{\mathcal{H}} \leq c\|f\|_{V}$ for all $f \in V$. Let $b(\cdot,\cdot)$ be a bounded  bilinear form on $V$. It defines an operator $A:D(A) \mapsto \mathcal{H}$ as follows
\begin{align*}
D(A)=\{f \in V\,:\,|b(f,g)| \leq C_{g}\|f\|_{\mathcal{H}} \text{ for all }g\in V\}, \ \ \ \langle Af,g \rangle_{\mathcal{H}}=b(f,g).
\end{align*}
Suppose that for some $\alpha>0$ and $\lambda_0 \in \mathbb{R}$ we have
\begin{equation}\label{eq:Garding_general}
\alpha\|f\|_{V}^2 \leq -{\rm Re\;}b(f,f)+\lambda_0\|f\|_{\mathcal{H}}^2 \quad \text{ for all }f \in V.
\end{equation}
Then $A$ is the infinitesimal generator of a strongly continuous semigroup $\{T_{t}\}_{t \geq 0}$ of operators on $\mathcal{H}$ which is holomorphic in a sector 
$$S_{\kappa} =\{z \in \mathbb{C}: |{\rm Arg}\,z|<\kappa \}$$
for some $\kappa >0$. Moreover, 
\begin{equation}\label{eq:semigroup_growth_pre}
\|T_tf\|_{\mathcal{H}} \leq \exp(\lambda_0t)\|f\|_{\mathcal{H}} \text{ for all }t \geq 0 \text{ and }f \in \mathcal{H}.
\end{equation}
\end{teo}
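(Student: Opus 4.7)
The plan is to construct the semigroup via the Hille--Yosida / sectorial criterion, with the resolvent produced by a Lax--Milgram argument on $V$. By replacing $b(f,g)$ with $b(f,g)-\lambda_0\langle f,g\rangle_{\mathcal H}$, which only shifts the would-be semigroup by the factor $e^{\lambda_0 t}$ and hence yields~\eqref{eq:semigroup_growth_pre} at the end, I may assume $\lambda_0=0$, so $-{\rm Re}\,b(f,f)\geq \alpha\|f\|_V^2$ for all $f\in V$.

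The first main step is to show that for every $\mu>0$ the operator $\mu I-A$ is a bijection $D(A)\to\mathcal H$ with $\|(\mu I-A)^{-1}\|_{\mathcal H\to\mathcal H}\leq \mu^{-1}$. Given $h\in\mathcal H$, the antilinear functional $g\mapsto\langle h,g\rangle_{\mathcal H}$ on $V$ is bounded because $\|g\|_{\mathcal H}\leq c\|g\|_V$. The sesquilinear form $b_\mu(f,g):=\mu\langle f,g\rangle_{\mathcal H}-b(f,g)$ is bounded on $V$ (using boundedness of $b$ together with the continuous embedding $V\hookrightarrow\mathcal H$) and satisfies
\begin{equation*}
{\rm Re}\,b_\mu(f,f)=\mu\|f\|_{\mathcal H}^2-{\rm Re}\,b(f,f)\geq \alpha\|f\|_V^2.
\end{equation*}
The complex Lax--Milgram theorem then yields a unique $f\in V$ with $b_\mu(f,g)=\langle h,g\rangle_{\mathcal H}$ for all $g\in V$. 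The very definition of $D(A)$ places $f$ there with $(\mu I-A)f=h$; testing $g=f$ gives $\mu\|f\|_{\mathcal H}^2\leq{\rm Re}\,b_\mu(f,f)={\rm Re}\,\langle h,f\rangle\leq\|h\|_{\mathcal H}\|f\|_{\mathcal H}$, hence the claimed resolvent estimate. Density of $D(A)$ in $\mathcal H$ follows because any $h\perp D(A)$, written as $h=(I-A)f$ with $f\in D(A)$, satisfies $0=\langle h,f\rangle=\|f\|_{\mathcal H}^2-\overline{\langle Af,f\rangle}$, whose real part forces $\|f\|_V=0$ by the G\aa rding inequality.

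By the Hille--Yosida theorem, $A$ is then the infinitesimal generator of a $C_0$-semigroup of contractions on $\mathcal H$, and undoing the shift produces~\eqref{eq:semigroup_growth_pre}. To upgrade to holomorphy I would repeat the Lax--Milgram step with $\mu$ replaced by a complex $\lambda=\rho e^{i\theta}$ in a right half-plane opened up by an angle $\kappa>0$. The key point is that for $|\theta|<\kappa$ one still has
\begin{equation*}
{\rm Re}\bigl[e^{-i\theta}b_\lambda(f,f)\bigr]\geq \alpha'\|f\|_V^2,
\end{equation*}
provided $\kappa$ is small enough relative to the boundedness constant $M$ of $b$ and the coercivity constant $\alpha$, because the imaginary parts of $b(f,f)$ are controlled by $M\|f\|_V^2$ while the real part is bounded below by $\alpha\|f\|_V^2$. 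Lax--Milgram applied to $e^{-i\theta}b_\lambda$ then gives $(\lambda I-A)^{-1}$ on $S_\kappa=\{\lambda\neq 0:|{\rm Arg}\,\lambda|<\pi/2+\kappa\}$ with the bound $\|(\lambda I-A)^{-1}\|\leq C/|\lambda|$, which is the standard sectorial criterion for $A$ to generate a bounded holomorphic semigroup in $S_\kappa$.

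The main obstacle is this final sectorial step: the opening angle $\kappa$ depends quantitatively on the ratio $M/\alpha$, and one must verify both that the modified coercivity persists uniformly on each sub-sector and that the resulting family $\lambda\mapsto(\lambda I-A)^{-1}h$ is genuinely analytic (not merely pointwise-defined). Everything else---existence of the resolvent at a single real point, density of $D(A)$, and the contraction estimate---is a direct application of Lax--Milgram and Hille--Yosida once the G\aa rding inequality is in hand.
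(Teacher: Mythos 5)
The paper does not actually prove this theorem; it only cites \cite{DHul} (Proposition (1.1) there, itself assembled from \cite{Lions} and \cite{Pazy}), so your self-contained argument is the interesting object here. The real-parameter part of it is correct and is the standard Lions/Lax--Milgram route: the shift to $\lambda_0=0$, the solvability of $(\mu I-A)f=h$ for $\mu>0$ via Lax--Milgram applied to $b_\mu$, the bound $\|(\mu I-A)^{-1}\|\leq \mu^{-1}$, the density of $D(A)$, and the Hille--Yosida step all go through as you describe (closedness of $A$, which Hille--Yosida needs, is automatic once $\mu I-A$ admits a bounded everywhere-defined inverse).

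The gap is exactly where you located it, and it is more than a verification left to the reader: the rotated coercivity ${\rm Re}\,\bigl[e^{-i\theta}b_\lambda(f,f)\bigr]\geq \alpha'\|f\|_V^2$ requires $\alpha\cos\theta-M|\sin\theta|>0$, i.e.\ $|\theta|<\arctan(\alpha/M)$, a thin sector about the positive real axis. It fails for $\theta$ near $\pi/2$, let alone on $|{\rm Arg}\,\lambda|<\pi/2+\kappa$, which is the range the sectorial generation theorem actually requires; so the mechanism you propose cannot reach the sector you claim. The standard repair does not rotate the form. For ${\rm Re}\,\lambda>0$ keep the real-part estimate ${\rm Re}\,b_\lambda(f,f)\geq {\rm Re}\,\lambda\,\|f\|_{\mathcal H}^2+\alpha\|f\|_V^2$ (so Lax--Milgram still produces $R(\lambda;A)$ on the whole open right half-plane) and combine it with the imaginary-part estimate $|{\rm Im}\,b_\lambda(f,f)|\geq |{\rm Im}\,\lambda|\,\|f\|_{\mathcal H}^2-M\|f\|_V^2$; adding $M/\alpha$ times the first to the second gives $(1+M/\alpha)\,|b_\lambda(f,f)|\geq |\lambda|\,\|f\|_{\mathcal H}^2$, hence $\|(\lambda I-A)^{-1}\|\leq (1+M/\alpha)/|\lambda|$ for ${\rm Re}\,\lambda>0$. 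One then crosses the imaginary axis by the Neumann-series argument: if $\|R(\lambda_0;A)\|\leq C/|\lambda_0|$, the disc of radius $|\lambda_0|/(2C)$ about $\lambda_0$ lies in the resolvent set with a comparable bound, which extends the estimate to a sector $|{\rm Arg}\,\lambda|<\pi/2+\kappa$ with $\kappa$ depending only on $M/\alpha$, and analyticity of $\lambda\mapsto R(\lambda;A)$ there is automatic. With that replacement your proof is complete.
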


\subsection{Semigroup \texorpdfstring{$\{S_t\}_{t \geq 0}$}{St} of operators on \texorpdfstring{$L^2(dw)$}{L2(dw)}}
{{For $\varepsilon \in \{0,\varepsilon_0\}$ let us define the symmetric bilinear form }
$$ b_{0,\varepsilon}(f,g)=- \sum_{j=1}^m \int_{\mathbb{R}^N} T^\ell_{\zeta_j} f(\mathbf x)T^\ell_{\zeta_j} \overline{g}(\mathbf x)\, dw(\mathbf x)+\varepsilon \sum_{j=1}^N \int_{\mathbb{R}^N}  T_{j}f(\mathbf x)T_{j} \overline{g}(\mathbf x)\, dw(\mathbf x)$$
with the domain $V_{\ell,0}=\{ f\in L^2(dw): (1+\| \xi\|)^\ell \mathcal Ff(\xi)\in L^2(dw)\}$ and the norm 
$$ \| f\|_{V_{\ell,0}}^2=\sum_{j=1}^m \| T^\ell_{\zeta_j} f\|_{\mathcal H_0}^2+\| f\|_{\mathcal H_0}^2.$$
The form can be written by means of the Dunkl transform  as  $$b_{0,\varepsilon} (f,g)= \int_{\mathbb{R}^N} \mathcal Ff(\xi)\mathcal F\overline{g}(\xi)\Big( -\sum_{j=1}^m \langle \zeta_j,\xi\rangle^{2\ell} +\varepsilon \| \xi\|^2\Big) dw(\xi). $$
\begin{propo}\label{propo:b_0}
Let $\varepsilon \in \{0,\varepsilon_0\}$. The form $b_{0,\varepsilon}$ is bounded on $V_{\ell,0}$. Moreover, it satisfies the following G\aa rding inequality: there are $\lambda_0,\alpha>0$ such that
\begin{equation}\label{eq:Gardin_V_0}
\alpha\|f\|_{V_{\ell,0}}^2 \leq -{\rm Re}\,b_{0,\varepsilon}(f,f)+\lambda_0\|f\|_{\mathcal{H}_0}^2 \quad \text{ for all }f \in V_{\ell,0}. 
\end{equation} 
\end{propo}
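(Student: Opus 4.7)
The plan is to move the entire problem to the Dunkl transform side via Plancherel's identity \eqref{eq:Plancherel}, which is justified here because the weight $\eta$ has been replaced by the constant $1$ (that is, $\mathcal{H}_0=L^2(dw)$). Using~\eqref{eq:transform_deriv} iteratively, I get
$$
\|T^{\ell}_{\zeta_j}f\|_{L^2(dw)}^2=\int_{\mathbb R^N}\langle\zeta_j,\xi\rangle^{2\ell}|\mathcal Ff(\xi)|^2\,dw(\xi),\qquad \|T_jf\|_{L^2(dw)}^2=\int_{\mathbb R^N}\xi_j^2|\mathcal Ff(\xi)|^2\,dw(\xi),
$$
so evaluating $b_{0,\varepsilon}(f,f)$ gives the real-valued expression already recorded in the excerpt. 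The quantitative engine of the proof is the two-sided comparison
$$
c\,\|\xi\|^{2\ell}\le\sum_{j=1}^{m}\langle\zeta_j,\xi\rangle^{2\ell}\le C\,\|\xi\|^{2\ell}\qquad(\xi\in\mathbb R^N),
$$
where the upper bound is Cauchy--Schwarz and the lower bound follows from the hypothesis that $\zeta_1,\dots,\zeta_m$ span $\mathbb R^N$ (so $\sum_j\langle\zeta_j,\xi\rangle^2\ge c'\|\xi\|^2$) together with the convexity of $t\mapsto t^\ell$. Consequently $\|f\|_{V_{\ell,0}}^2$ is equivalent to $\int(1+\|\xi\|^{2\ell})|\mathcal Ff(\xi)|^2\,dw(\xi)$.

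For boundedness I would apply the Cauchy--Schwarz inequality directly to $b_{0,\varepsilon}(f,g)$. The top-order term $\sum_j\|T^\ell_{\zeta_j}f\|_{\mathcal H_0}\|T^\ell_{\zeta_j}g\|_{\mathcal H_0}$ is at most $\|f\|_{V_{\ell,0}}\|g\|_{V_{\ell,0}}$ by definition. For the lower-order perturbation it suffices to control $\|T_jf\|_{\mathcal H_0}$: since $\xi_j^2\le\|\xi\|^2\le 1+\|\xi\|^{2\ell}$ for $\ell\ge 1$, Plancherel and the comparison above give $\|T_jf\|_{\mathcal H_0}\le C\|f\|_{V_{\ell,0}}$.

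For the G{\aa}rding estimate I would use the identity
$$
-\mathrm{Re}\,b_{0,\varepsilon}(f,f)+\lambda_0\|f\|_{\mathcal H_0}^2=\int_{\mathbb R^N}\Bigl[\sum_{j=1}^{m}\langle\zeta_j,\xi\rangle^{2\ell}-\varepsilon\|\xi\|^2+\lambda_0\Bigr]|\mathcal Ff(\xi)|^2\,dw(\xi)
$$
and reduce the claim to the pointwise inequality
$$
\sum_{j=1}^{m}\langle\zeta_j,\xi\rangle^{2\ell}-\varepsilon\|\xi\|^2+\lambda_0\ge \alpha\Bigl(\sum_{j=1}^{m}\langle\zeta_j,\xi\rangle^{2\ell}+1\Bigr).
$$
Using $\sum_j\langle\zeta_j,\xi\rangle^{2\ell}\ge c\|\xi\|^{2\ell}$, this amounts to the scalar inequality $(1-\alpha)c\|\xi\|^{2\ell}-\varepsilon\|\xi\|^2\ge\alpha-\lambda_0$, which for any fixed small $\alpha$ is clearly satisfied once $\lambda_0$ is chosen large enough.

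The only point that requires any thought is the case $\ell=1$, where the $\|\xi\|^{2\ell}$ term does not dominate $\varepsilon\|\xi\|^2$ at infinity; there one must also ask that $(1-\alpha)c>\varepsilon$, which forces $\varepsilon_0$ to be small relative to the spanning constant $c$. This is the main (and rather mild) obstacle, and it is consistent with the paper's framework since $\varepsilon_0$ was chosen as small as needed in Proposition \ref{propo:garding_perturbed}; we simply shrink it further if the present bound demands it. For $\ell\ge 2$ the leading term $c\|\xi\|^{2\ell}$ swallows $\varepsilon\|\xi\|^2$ for every fixed $\varepsilon_0$, so no additional smallness is required.
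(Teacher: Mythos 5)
Your proof is correct and follows essentially the same route as the paper: pass to the Dunkl transform side via Plancherel and compare the symbol $\sum_j\langle\zeta_j,\xi\rangle^{2\ell}$ with $\|\xi\|^{2\ell}$ using the spanning hypothesis. The paper only writes out the boundedness explicitly and dismisses the G{\aa}rding inequality with ``can be verified by the same way,'' so your explicit pointwise inequality --- and in particular your observation that for $\ell=1$ one must additionally require $\varepsilon_0$ to be small relative to the spanning constant (harmless, since Proposition~\ref{propo:garding_perturbed} holds for all $0\le\varepsilon\le\varepsilon_0$ and shrinking $\varepsilon_0$ costs nothing) --- supplies a detail the paper leaves implicit.
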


{\begin{proof}
By the Cauchy--Schwarz inequality and~\eqref{eq:Plancherel} we have
\begin{align*}
    |b_{0,\varepsilon}(f,g)| &\leq \sum_{j=1}^{m}\|T_{\zeta_j}^{\ell}f\|_{L^2(dw)}\|T_{\zeta_j}^{\ell}g\|_{L^2(dw)}+\varepsilon\sum_{j=1}^{N}\|T_jf\|_{L^2(dw)}\|T_jg\|_{L^2(dw)}\\& \leq C \sum_{j=1}^{m}\||\xi|^{\ell}\mathcal{F}f(\xi)\|_{L^2(dw(\xi))}\||\xi|^{\ell}\mathcal{F}g(\xi)\|_{L^2(dw(\xi))}\\
    &\quad +\varepsilon C \sum_{j=1}^{N}\||\xi|\mathcal{F}f(\xi)\|_{L^2(dw(\xi))}\||\xi|\mathcal{F}g(\xi)\|_{L^2(dw(\xi))}\\&\leq C \|(1+|\xi|)^{\ell}\mathcal{F}f(\xi)\|_{L^2(dw(\xi))}\|(1+|\xi|)^{\ell}\mathcal{F}g(\xi)\|_{L^2(dw(\xi))},
\end{align*}
which implies that the form $b_{0,\varepsilon}$ is bounded on $V_{\ell,0}$. The G\aa rding inequality can be verified by the same way.
\end{proof}}
{As the consequence of the boundedness of $b_{0,\varepsilon}$, we conclude that it defines a self-adjoint linear operator $A^{(\varepsilon)}$,  which, thanks to Theorem~\ref{teo:Lions} and the G\aa rding   inequality \eqref{eq:Gardin_V_0}, generates a strongly continuous semigroup $\{S_t^{(\varepsilon)}\}_{t \geq 0}$  of bounded self-adjoint linear operators on $\mathcal H_0=L^2(dw)$, which has the form 
$$ S_t^{(\varepsilon)}f(\mathbf x)= f*q^{(\varepsilon)}_t(\mathbf x), $$ 
where 
\begin{equation}\label{eq:q_tepsilon}
   q^{(\varepsilon)}_t(\mathbf x)=\mathcal{F}^{-1}\Big(\exp\Big(-t\Big(\sum_{j=1}^m \langle \zeta_j,\cdot \rangle ^{2\ell} -\varepsilon \| \cdot\|^2\big)\Big)\Big)(\mathbf  x). 
\end{equation}
Let us also remark (see Proposition~\ref{pro:core}) that the operator $A^{(\varepsilon)}$ is the closure in the space $\mathcal H_0$ of
\begin{equation}\label{eq:L}
    L^{(\varepsilon)}=\sum_{j=1}^{m}T^{2\ell}_{\zeta_j}-\varepsilon \Delta,
\end{equation}
 initially defined on  $C^{\infty}_c(\mathbb{R}^N)$} (for the proof see Appendix~\ref{sec:core} with $s=0$).

\subsection{Semigroups on weighted Hilbert spaces}\label{sec:semigroup}
We are in a position to apply Theorem \ref{teo:lions_app} to  the weighted bilinear forms $b_{ s,\varepsilon}$, where  $\varepsilon  \in \{0,\varepsilon_0\}$ and $s>1/4$. 
Let us remind that the forms $b_{s,\varepsilon}$ are bounded (see Propositions~\ref{propo:form} and~\ref{propo:form_perturbation}). Let  $A_s^{(\varepsilon)}$ be the operator associated with the form $b_{s,\varepsilon}$ with its domain $D(A_s^{(\varepsilon)})\subset V_{\ell,s}\subset \mathcal H_s$. 
The following theorem is a direct consequence of 
Propositions \ref{propo:form_perturbation}, \ref{propo:garding_perturbed}, and Theorem \ref{teo:Lions}. 
} 
\begin{teo}\label{teo:lions_app}
Let $\varepsilon \in \{0,\varepsilon_0\}$. There are constants $c_0,  \kappa  >0$ such that for all $s>1/4$ the operator $A_s^{(\varepsilon)}$ is the infinitesimal generator of a strongly continuous semigroup $\{S^{\{\varepsilon,s\}}_t\}_{t \geq 0}$ of operators on $\mathcal H_s$ which is holomorphic is a sector
$$\{z \in \mathbb{C}: |\text{Arg}\,z|<\kappa \},$$
 which for all $f \in \mathcal H_s$ satisfies
\begin{equation}\label{eq:semigroup_growth}
\|S^{\{\varepsilon,s\}}_t f\|_{\mathcal H_s} \leq \exp(c_0 s^{2\ell}t)\|f\|_{\mathcal H_s}
\end{equation}
for all $t \geq 0$ and for all $f \in \mathcal H_s$.
\end{teo}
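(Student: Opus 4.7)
The plan is to invoke Theorem~\ref{teo:Lions} with $\mathcal H:=\mathcal H_s$, $V:=V_{\ell,s}$, and bilinear form $b:=b_{s,\varepsilon}$, since all the analytic work has already been done in the previous section. First I would verify the structural hypotheses: $V_{\ell,s}$ is a dense subspace of $\mathcal H_s$ (it is, by construction, the completion of $C_c^\infty(\mathbb R^N)$ in the $V_{\ell,s}$-norm, and $C_c^\infty(\mathbb R^N)$ is dense in $\mathcal H_s$), and the embedding $\|f\|_{\mathcal H_s}\leq \|f\|_{V_{\ell,s}}$ is immediate from the definition of the $V_{\ell,s}$-norm.

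Next I would cite Proposition~\ref{propo:form_perturbation} for the boundedness of $b_{s,\varepsilon}$ on $V_{\ell,s}$, and Proposition~\ref{propo:garding_perturbed} for the G{\aa}rding inequality in the form required by~\eqref{eq:Garding_general}, choosing $\lambda_0:=C_\alpha s^{2\ell}$. Theorem~\ref{teo:Lions} then immediately yields a strongly continuous semigroup $\{S^{\{\varepsilon,s\}}_t\}_{t\geq 0}$ on $\mathcal H_s$, holomorphic in a sector of some aperture $\kappa>0$, whose generator is exactly the operator $A_s^{(\varepsilon)}$ attached to $b_{s,\varepsilon}$; furthermore the norm bound~\eqref{eq:semigroup_growth_pre} specializes to $\|S^{\{\varepsilon,s\}}_t f\|_{\mathcal H_s}\leq \exp(C_\alpha s^{2\ell}t)\|f\|_{\mathcal H_s}$, which is the desired~\eqref{eq:semigroup_growth} with $c_0:=C_\alpha$.

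The only point that requires real care, and the only candidate for a main obstacle, is the uniformity of the sector angle $\kappa$ with respect to $s>1/4$ and $\varepsilon\in\{0,\varepsilon_0\}$. Inspection of the proof of Theorem~\ref{teo:Lions} in~\cite{DHul} shows that $\kappa$ depends only on the ratio of the boundedness constant of $b$ to the coercivity constant $\alpha$, both of which the previous section has produced uniformly in $s$ and $\varepsilon$. One convenient way to make this transparent is to rescale the semigroup by $e^{-C_\alpha s^{2\ell}t}$, which turns $b_{s,\varepsilon}+C_\alpha s^{2\ell}\langle\cdot,\cdot\rangle_{\mathcal H_s}$ into a form that is bounded and coercive with $s$-independent constants, and hence generates a holomorphic semigroup in a sector whose angle is intrinsic to the Lions construction. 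Undoing the rescaling recovers $\{S^{\{\varepsilon,s\}}_t\}_{t\geq 0}$ on the same sector, giving the uniform $\kappa$ claimed in the theorem.
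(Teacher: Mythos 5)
Your proposal is correct and follows exactly the paper's route: the theorem is obtained as a direct application of Theorem~\ref{teo:Lions} to $\mathcal H=\mathcal H_s$, $V=V_{\ell,s}$, $b=b_{s,\varepsilon}$, with boundedness from Proposition~\ref{propo:form_perturbation} and the G{\aa}rding inequality from Proposition~\ref{propo:garding_perturbed} giving $\lambda_0=C_\alpha s^{2\ell}$. Your additional remark justifying the uniformity of the sector angle $\kappa$ in $s$ via the $s$-independence of the boundedness and coercivity constants is a welcome elaboration of a point the paper leaves implicit.
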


{
  Clearly, $\mathcal H_{s_1}\subset \mathcal H_{s_2}\subset \mathcal H_{0}$ and $V_{\ell, s_1}\subset  V_{\ell, s_2}\subset V_{\ell, 0}$ for $s_1\geq s_2\geq 0$.
The next theorem asserts that the semigroups $\{S_t^{\{\varepsilon,s\}}\}_{t \geq0}$ can be thought as the semigroup $\{S^{(\varepsilon)}_t\}_{t \geq 0}$ acting on $\mathcal H_s$. 

\begin{teo}\label{teo:one_semigroup}
Let $\varepsilon \in \{0,\varepsilon_0\}$. For all $s>1/4$ and $f\in\mathcal H_s\subset L^2(dw)$ we have 
$$ S^{\{\varepsilon,s\}}_t f=S^{(\varepsilon)}_t f=f*q^{(\varepsilon)}_t \text{ for all }t \geq 0.$$
\end{teo}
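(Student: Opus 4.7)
The plan is to verify the identity $S_t^{\{\varepsilon,s\}}f = f*q_t^{(\varepsilon)}$ first on the dense subclass $C_c^\infty(\mathbb R^N) \subset \mathcal H_s$, by exhibiting both sides as solutions of the same abstract Cauchy problem in $\mathcal H_s$, and then extend to arbitrary $f\in\mathcal H_s$ by continuity. Fix $f\in C_c^\infty(\mathbb R^N)$ and set $u(t)=f*q_t^{(\varepsilon)}=S_t^{(\varepsilon)}f$. Since $q_t^{(\varepsilon)} \in \mathcal S(\mathbb R^N)$ (its Dunkl transform is a rapidly decaying symbol) and the Dunkl convolution of two Schwartz functions is Schwartz, we have $u(t)\in\mathcal S(\mathbb R^N)$ for every $t\geq 0$. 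A routine cutoff argument shows $\mathcal S(\mathbb R^N)\subset V_{\ell,s}$. A direct computation on the Dunkl transform side gives that $t\mapsto u(t)$ is $C^1$ from $[0,\infty)$ into $\mathcal H_s$ with derivative $u'(t)=L^{(\varepsilon)}u(t)\in\mathcal S$, for $L^{(\varepsilon)}$ the formal differential-difference operator in~\eqref{eq:L}.

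The technical crux is the identification $u(t)\in D(A_s^{(\varepsilon)})$ together with $A_s^{(\varepsilon)}u(t)=L^{(\varepsilon)}u(t)$. By the defining property of $A_s^{(\varepsilon)}$ this amounts to proving
$$b_{s,\varepsilon}(u(t),g)=\langle L^{(\varepsilon)}u(t),g\rangle_{\mathcal H_s}\quad\text{for every }g\in V_{\ell,s}.$$
With $u(t)\in\mathcal S$, one may iterate the skew-symmetry of $T_{\zeta_j}$ with respect to $dw$ inside each summand of $a_{s}(u(t),g)$, moving the $\ell$ Dunkl derivatives from $\bar g\,\eta(\cdot,s)$ back onto $u(t)$; this produces $(-1)^{\ell+1}\sum_j\int T_{\zeta_j}^{2\ell}u(t)\,\bar g\,\eta(\cdot,s)\,dw$, and the analogous treatment of the $\varepsilon$-perturbation term yields the $-\varepsilon\Delta$ contribution. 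Crucially, because the weight $\eta(\cdot,s)$ is carried along with $\bar g$ throughout the integration by parts, no Dunkl Leibniz rule is required at this step; the Leibniz-type machinery of Section~3 is invoked only to bound and estimate $b_{s,\varepsilon}$, not to identify it. Once the identification is in hand, $u$ solves $u'(t)=A_s^{(\varepsilon)}u(t)$, $u(0)=f$, in $\mathcal H_s$, and the uniqueness assertion inherent in Theorem~\ref{teo:Lions} forces $u(t)=S_t^{\{\varepsilon,s\}}f$.

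For the extension, let $f\in\mathcal H_s$ be arbitrary and choose $f_n\in C_c^\infty(\mathbb R^N)$ with $f_n\to f$ in $\mathcal H_s$, which is possible since $V_{\ell,s}$, and hence $C_c^\infty(\mathbb R^N)$, is dense in $\mathcal H_s$. The continuous embedding $\mathcal H_s\hookrightarrow L^2(dw)$ gives $f_n\to f$ in $L^2(dw)$ as well, so $S_t^{(\varepsilon)}f_n\to S_t^{(\varepsilon)}f$ in $L^2(dw)$, while~\eqref{eq:semigroup_growth} forces $S_t^{\{\varepsilon,s\}}f_n\to S_t^{\{\varepsilon,s\}}f$ in $\mathcal H_s$, and hence in $L^2(dw)$. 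Since the previous paragraph yields $S_t^{(\varepsilon)}f_n=S_t^{\{\varepsilon,s\}}f_n$ for every $n$, the limits coincide $dw$-almost everywhere, completing the proof. The main obstacle throughout is the identification step for the generator: $A_s^{(\varepsilon)}$ is defined only implicitly through the weighted form $b_{s,\varepsilon}$, and the exponential weight $\eta(\cdot,s)$ does not interact gracefully with individual Dunkl operators; the argument succeeds only because Schwartz-class test functions legitimize all integrations by parts and because the weight is kept attached to the test function $g$ throughout, sidestepping the absence of a Leibniz rule for Dunkl operators.
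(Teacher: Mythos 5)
There is a genuine gap, and it is fatal to the strategy as written. Your argument hinges on the claim that ``a routine cutoff argument shows $\mathcal S(\mathbb R^N)\subset V_{\ell,s}$,'' so that $u(t)=f*q_t^{(\varepsilon)}\in\mathcal S(\mathbb R^N)$ lands in $D(A_s^{(\varepsilon)})$ and the two sides can be compared as solutions of the same Cauchy problem in $\mathcal H_s$. But $\mathcal S(\mathbb R^N)\not\subset\mathcal H_s$: the weight satisfies $\eta(\mathbf x,s)\geq e^{s\|\mathbf x\|}$ (see~\eqref{eq:eta_comp}), so membership in $\mathcal H_s$ requires decay at least like $e^{-s\|\mathbf x\|/2}$, which Schwartz functions need not have (e.g.\ $e^{-\sqrt{1+\|\mathbf x\|^2}}\in\mathcal S$ fails to lie in $\mathcal H_s$ for $s\geq 2$). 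Knowing that $f*q_t^{(\varepsilon)}$ belongs to the exponentially weighted spaces for \emph{all} $s$ is essentially equivalent to the exponential decay of $q_t^{(\varepsilon)}$, which is the main theorem of the paper and is \emph{deduced from} Theorem~\ref{teo:one_semigroup} (via Lemma~\ref{lem:pointwise_pre} and Corollary~\ref{coro:semigroup_pointwise}), not available before it. So your approach is circular at its central step, and the uniqueness argument for the abstract Cauchy problem in $\mathcal H_s$ cannot even be set up.

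The paper avoids this by arguing in the opposite direction: Lemma~\ref{lem:characterization} characterizes $D(A_s^{(\varepsilon)})$ distributionally (the integration by parts is performed against the compactly supported test function $\psi=\varphi\,\eta(\cdot,s)^{-1}$, which cancels the weight rather than carrying it), yielding the operator inclusions $A_{s}^{(\varepsilon)}\subset A^{(\varepsilon)}$ of Corollary~\ref{coro:inclusions}. From these one gets inclusion of resolvents, hence of Yosida approximations, hence of the semigroups via the Hille--Yosida construction. In that route one never needs to show a priori that $S_t^{(\varepsilon)}$ maps anything into $\mathcal H_s$; the compatibility of the semigroups is the \emph{output}, obtained purely from the fact that the weighted generator is a restriction of the unweighted one. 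Your final density/continuity extension step is fine, but the core identification must be reorganized along these lines (or replaced by an independent proof that $q_t^{(\varepsilon)}$ has exponential decay, which would be a different paper).
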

\begin{proof}
See Appendix~\ref{sec:selfadjoint}. 
\end{proof}
\begin{propo}\label{pro:core}
Let $\varepsilon \in \{0,\varepsilon_0\}$. For  $s> 1/4$ let $\lambda>c_0s^{2\ell}$, where $c_0>0$ is the constant from \eqref{eq:semigroup_growth}.  Then for every $n\in \mathbb N$ the space $C_c^\infty(\mathbb R^N)$ is a core for $(\lambda I-A_s^{(\varepsilon)})^n$. 
\end{propo}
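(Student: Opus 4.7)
The claim is equivalent, since $\lambda$ lies in the resolvent set of $A_s^{(\varepsilon)}$ by \eqref{eq:semigroup_growth}, to showing that every $u\in D((A_s^{(\varepsilon)})^n)$ can be approximated by a sequence $\phi_k\in C_c^\infty$ with $\phi_k\to u$ and $(A_s^{(\varepsilon)})^j\phi_k\to(A_s^{(\varepsilon)})^j u$ in $\mathcal H_s$ for $0\le j\le n$. My plan has three steps: identify, smooth, truncate.

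First, for $\phi\in C_c^\infty$ and $g\in V_{\ell,s}$, integration by parts in $b_{s,\varepsilon}(\phi,g)$---using skew-symmetry of the Dunkl operators to move all derivatives onto the compactly supported factor $\phi$---yields $b_{s,\varepsilon}(\phi,g)=\int L^{(\varepsilon)}\phi\cdot\overline g\,\eta(\cdot,s)\,dw$ with $L^{(\varepsilon)}$ the differential-difference operator from \eqref{eq:L}. Hence $A_s^{(\varepsilon)}\phi=L^{(\varepsilon)}\phi$. Since each $T_\xi$ preserves $C_c^\infty$---the difference quotient in \eqref{eq:T_j} is smooth across the hyperplanes $\langle\alpha,\mathbf x\rangle=0$ and is supported in the $G$-orbit of $\operatorname{supp}\phi$---iteration gives $(A_s^{(\varepsilon)})^n\phi=(L^{(\varepsilon)})^n\phi\in C_c^\infty\subset\mathcal H_s$, so $C_c^\infty\subset D((A_s^{(\varepsilon)})^n)$.

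Second, fix $u\in D((A_s^{(\varepsilon)})^n)$ and set $u_t:=S_t^{\{\varepsilon,s\}}u$. Theorem \ref{teo:one_semigroup} gives $u_t=u*q_t^{(\varepsilon)}$, so $\mathcal F u_t=\mathcal F u\cdot \mathcal F q_t^{(\varepsilon)}$ decays faster than any polynomial. Differentiation of the inverse Dunkl transform under the integral---justified by Lemma \ref{lem:Roesler_Dunkl_kernel}---shows $u_t\in C^\infty(\mathbb R^N)$. By holomorphy of the semigroup on $\mathcal H_s$, $u_t\in D((A_s^{(\varepsilon)})^k)$ for every $k$, and iterating the G\aa rding-type bounds \eqref{eq:garding_pre_2}--\eqref{eq:garding_pre_3} shows that all iterated Dunkl derivatives $T^{\beta}u_t$ lie in $\mathcal H_s$. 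Strong continuity combined with $(A_s^{(\varepsilon)})^j u_t=S_t^{\{\varepsilon,s\}}(A_s^{(\varepsilon)})^j u$ yields convergence $(A_s^{(\varepsilon)})^j u_t\to (A_s^{(\varepsilon)})^j u$ in $\mathcal H_s$ as $t\to 0^+$ for $0\le j\le n$.

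Third, choose a radial $\chi\in C_c^\infty$ with $\chi\equiv 1$ on $B(0,1)$, put $\chi_R(\mathbf x)=\chi(\mathbf x/R)$, and set $\phi_{R,t}:=\chi_R u_t\in C_c^\infty$. Dominated convergence gives $\phi_{R,t}\to u_t$ in $\mathcal H_s$. For the higher derivatives, the radial Leibniz rule \eqref{eq:Leibniz} yields $T_\xi(\chi_R u_t)=\chi_R T_\xi u_t+u_t\,\partial_\xi\chi_R$, and iterating---invoking the full formula \eqref{eq:T_j} whenever the non-radial factor $\partial^{\beta}\chi_R$ reappears---expands $(L^{(\varepsilon)})^j(\chi_R u_t)$ as $\chi_R(L^{(\varepsilon)})^j u_t$ plus a finite sum of commutator terms of the form $R^{-|\beta|}\psi_\beta(\mathbf x/R)(T^{\beta'}u_t)(\sigma\mathbf x)$ with $|\beta|\ge 1$, $\sigma\in G$, and $\psi_\beta\in C_c^\infty$ supported in $\{\tfrac12\le\|\mathbf x\|\le 2\}$. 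Using $G$-invariance of $\eta(\cdot,s)$ and $dw$, the $\mathcal H_s$-norm of each commutator term is bounded by $CR^{-|\beta|}\bigl(\int_{\|\mathbf y\|\sim R}|T^{\beta'}u_t(\mathbf y)|^2\,\eta(\mathbf y,s)\,dw(\mathbf y)\bigr)^{1/2}\to 0$ (by absolute continuity of the integral, since $T^{\beta'}u_t\in\mathcal H_s$), and the main term $\chi_R(L^{(\varepsilon)})^j u_t\to(L^{(\varepsilon)})^j u_t$ in $\mathcal H_s$ by a further dominated convergence. A diagonal sequence $t_k\to 0^+$, $R_k\to\infty$ completes the approximation.

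The main technical hurdle is the third step. Because the Dunkl operator is nonlocal, iterated application of the radial Leibniz rule to $\chi_R u_t$ unavoidably introduces non-radial factors $\partial^{\beta}\chi_R$, forcing the use of the full formula \eqref{eq:T_j} together with its reflection term and thus requiring one to track the resulting reflected evaluations $u_t\circ\sigma$. The combinatorial bookkeeping and the $G$-invariance-based absorption of reflections into the weighted norms are the heart of the work; once they are organized, the $R^{-|\beta|}$ prefactor and shell support of $\partial^{\beta}\chi_R$ paired with the $\mathcal H_s$-integrability of $T^{\beta'}u_t$ deliver the required vanishing in the limit.
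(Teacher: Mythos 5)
Your architecture is genuinely different from the paper's. You mollify a general $u\in D((A_s^{(\varepsilon)})^n)$ with the semigroup and then truncate; the paper instead observes that, since $R(\lambda;A_s^{(\varepsilon)})^n$ is bounded, it suffices to show that $(\lambda I-A_s^{(\varepsilon)})^n(C_c^\infty(\mathbb R^N))$ is dense in $\mathcal H_s$, and therefore only ever has to truncate functions of the form $R(\lambda;A_s^{(\varepsilon)})^nf$ with $f\in C_c^\infty(\mathbb R^N)$. Your Step 1 matches Lemma~\ref{lem:characterization}, and your Step 3 is essentially the paper's final truncation. The problem lies in Step 2.

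The gap is the assertion that ``iterating the G{\aa}rding-type bounds \eqref{eq:garding_pre_2}--\eqref{eq:garding_pre_3} shows that all iterated Dunkl derivatives $T^{\beta}u_t$ lie in $\mathcal H_s$.'' Those inequalities control $T^{\beta}$ only for $|\beta|\leq\ell$, in terms of the $V_{\ell,s}$-norm, and are proved for $C_c^\infty$-functions. What holomorphy gives you is $(L^{(\varepsilon)})^{k}u_t\in\mathcal H_s$ and $(L^{(\varepsilon)})^{k-1}u_t\in V_{\ell,s}$ for every $k$; to extract from this the individual derivatives $T^{\beta}u_t\in\mathcal H_s$ for $\ell<|\beta|\leq 2\ell n$ you need a \emph{higher-order} weighted a priori estimate of the type $\|T^{\beta}f\|_{\mathcal H_s}\lesssim\|(L^{(\varepsilon)})^{k}f\|_{\mathcal H_s}+\|f\|_{\mathcal H_s}$, which is nowhere proved in the paper and does not follow by ``iterating'' the order-$\ell$ estimates (to apply \eqref{eq:garding_pre_2} to $T^{\beta'}u_t$ you would already need $T_{\zeta_j}^{\ell}T^{\beta'}u_t\in\mathcal H_s$, i.e.\ exactly what you are trying to establish). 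The paper's own higher-order results (Lemma~\ref{lem:mixed_norms}, Corollary~\ref{coro:higher_derivatives}) indicate that weighted control of $T^{\beta}$ for large $|\beta|$ is obtained only at the price of enlarging the weight to $\mathcal H_{2^{|\beta|}s}$ and adding unweighted Sobolev norms, so the iteration does not close inside a single $\mathcal H_s$. Since your Step 3 needs $T^{\beta'}u_t\in\mathcal H_s$ for all $|\beta'|<2\ell n$ in order to kill the commutator terms produced by the cutoff, the argument is incomplete at this point. The paper avoids the issue entirely: for $f\in C_c^\infty(\mathbb R^N)$ the commutation $T^{\beta}R(\lambda;A_s^{(\varepsilon)})^nf=R(\lambda;A_s^{(\varepsilon)})^nT^{\beta}f$, together with $T^{\beta}f\in C_c^\infty(\mathbb R^N)$, puts $R(\lambda;A_s^{(\varepsilon)})^nf$ into the class $\mathcal V_s^\infty=\{g\in C^\infty:\ T^{\beta}g\in\mathcal H_s\ \text{for all }\beta\}$ for free, and the truncation is applied only to members of that class. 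To repair your proof, either supply the missing weighted regularity for $S_t^{\{\varepsilon,s\}}u$, or switch to the resolvent-image formulation as the paper does.
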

\begin{proof}
See Appendix~\ref{sec:core}.
\end{proof}
 
}

\section{Pointwise estimates for integral kernel of \texorpdfstring{$S_t$}{St}}
{
We define the sequence $\{d(n)\}_{n \in \mathbb{N}}$ inductively by
\begin{align*}
    \begin{cases}
    d(1)=2,\\
    d(n+1)=2d(n)+2 \text{ for }n \geq 2.
    \end{cases}
\end{align*}
\begin{lema}\label{lem:mixed_norms}
For every $\beta\in \mathbb N_0^N$ there is a constant $C>0$ such that for every $s>1/4$ and every $f\in C_c^\infty(\mathbb R^N) $ we have 
\begin{equation}\label{eq:mixed_1}
    \| T^\beta f\|_{\mathcal H_s}^2\leq C\Big( s^{d(|\beta|)} \|f\|_{\mathcal H_{2^{|\beta|}s}}^2 + \sum_{|\beta'|\leq |\beta|+1} \| T^{\beta'}f\|_{L^2(dw)}^2\Big). 
\end{equation}
\end{lema}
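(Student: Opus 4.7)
The plan is to proceed by induction on $|\beta|$, with the convention $d(0)=0$ (extending the recursion $d(n+1)=2d(n)+2$ to $n=0$, consistent with the closed form $d(n)=2^{n+1}-2$). The base case $|\beta|=0$ reduces to $\|f\|_{\mathcal H_s}^2\leq\|f\|_{\mathcal H_s}^2$, immediate.

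For the inductive step with $|\beta|\geq 1$, I would pick an index $k$ with $\beta_k\geq 1$, write $\beta=\beta_0+e_k$, and integrate by parts once using the skew-symmetry of $T_k$ on $L^2(dw)$:
\[
\|T^\beta f\|_{\mathcal H_s}^2=-\int T^{\beta_0}f\cdot T_k\bigl(\overline{T^\beta f}\,\eta(\cdot,s)\bigr)\,dw.
\]
Since $\eta(\cdot,s)$ is radial, Lemma~\ref{lem:derivative_formula_radial} expands the bracket as $\eta(\cdot,s)\overline{T^{\beta+e_k}f}+s\eta_k(\cdot,s)\overline{T^\beta f}$ with $|\eta_k(\cdot,s)|\leq C\eta(\cdot,s)$. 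Pairing the full weight $\eta$ with $T^{\beta_0}f$ via Cauchy--Schwarz on the first piece (and invoking $\eta^2(\cdot,s)\sim\eta(\cdot,2s)$ from~\eqref{eq:comparison1}) and splitting $\eta=\eta^{1/2}\eta^{1/2}$ on the second gives
\[
\|T^\beta f\|_{\mathcal H_s}^2\leq C\|T^{\beta_0}f\|_{\mathcal H_{2s}}\|T^{\beta+e_k}f\|_{L^2(dw)}+Cs\|T^{\beta_0}f\|_{\mathcal H_s}\|T^\beta f\|_{\mathcal H_s}.
\]
Young's inequality, tuned to absorb $\tfrac{1}{2}\|T^\beta f\|_{\mathcal H_s}^2$ on the second product and to trade the factor $1/s^2$ (bounded since $s>1/4$) on the first, yields the core estimate
\[
\|T^\beta f\|_{\mathcal H_s}^2\leq Cs^2\,\|T^{\beta_0}f\|_{\mathcal H_{2s}}^2+C\,\|T^{\beta+e_k}f\|_{L^2(dw)}^2. \qquad (\ast)
\]

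If $|\beta|=1$ then $\beta_0=0$, and $(\ast)$ already matches the lemma since $d(1)=2$ and $|\beta+e_k|=2=|\beta|+1$. For $|\beta|\geq 2$ I would apply the inductive hypothesis to $\|T^{\beta_0}f\|_{\mathcal H_{2s}}^2$ at parameter $2s$, bounding it by $C\bigl((2s)^{d(|\beta|-1)}\|f\|_{\mathcal H_{2^{|\beta|}s}}^2+\sum_{|\gamma|\leq|\beta|}\|T^\gamma f\|_{L^2(dw)}^2\bigr)$. Since $d(|\beta|-1)+2=d(|\beta|)-d(|\beta|-1)\leq d(|\beta|)$, substitution into $(\ast)$ produces
\[
\|T^\beta f\|_{\mathcal H_s}^2\leq Cs^{d(|\beta|)}\|f\|_{\mathcal H_{2^{|\beta|}s}}^2+Cs^2\!\!\sum_{|\gamma|\leq|\beta|}\!\!\|T^\gamma f\|_{L^2(dw)}^2+C\|T^{\beta+e_k}f\|_{L^2(dw)}^2.
\]

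The main obstacle is the residual factor $s^2$ multiplying the intermediate-order $L^2(dw)$-norms, which the lemma forbids. I plan to remove it by a Dunkl--Fourier interpolation: by~\eqref{eq:Plancherel} and~\eqref{eq:transform_deriv}, $\|T^\gamma f\|_{L^2(dw)}^2\leq\int\|\xi\|^{2|\gamma|}|\mathcal Ff(\xi)|^2\,dw(\xi)$, and splitting the integral at $\|\xi\|=R$, while controlling the high-frequency part via the expansion $\|\xi\|^{2(|\beta|+1)}=\sum_{|\delta|=|\beta|+1}\binom{|\beta|+1}{\delta}\xi^{2\delta}$, gives
\[
\|T^\gamma f\|_{L^2(dw)}^2\leq R^{2|\gamma|}\|f\|_{L^2(dw)}^2+CR^{-2(|\beta|+1-|\gamma|)}\!\!\!\sum_{|\delta|=|\beta|+1}\!\!\!\|T^\delta f\|_{L^2(dw)}^2.
\]
Choosing $R=s^{1/(|\beta|+1-|\gamma|)}$ balances the two contributions and delivers
\[
s^2\|T^\gamma f\|_{L^2(dw)}^2\leq s^{2(|\beta|+1)/(|\beta|+1-|\gamma|)}\|f\|_{L^2(dw)}^2+C\!\!\!\sum_{|\delta|=|\beta|+1}\!\!\!\|T^\delta f\|_{L^2(dw)}^2.
\]
The worst case $|\gamma|=|\beta|$ produces exponent $2(|\beta|+1)$, and the required inequality $2(|\beta|+1)\leq d(|\beta|)=2^{|\beta|+1}-2$ is equivalent to $|\beta|+2\leq 2^{|\beta|}$, which holds precisely for $|\beta|\geq 2$---exactly the regime where this step is invoked. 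Hence $\|f\|_{L^2(dw)}^2\leq\|f\|_{\mathcal H_{2^{|\beta|}s}}^2$ absorbs into $Cs^{d(|\beta|)}\|f\|_{\mathcal H_{2^{|\beta|}s}}^2$ and the $\|T^\delta f\|_{L^2(dw)}^2$ contribution fits into $\sum_{|\beta'|\leq|\beta|+1}\|T^{\beta'}f\|_{L^2(dw)}^2$, closing the induction. The delicate arithmetic point I expect to be hardest to set up cleanly is that the particular recurrence $d(n)=2d(n-1)+2$ is simultaneously tuned to swallow both the $s^{d(|\beta|-1)+2}$ produced by $(\ast)$ and the worst-case interpolation exponent $2(|\beta|+1)$---any looser choice of the Young parameter or the cutoff $R$ would break one of the two matches.
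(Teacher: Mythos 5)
Your proof is correct, but it takes a genuinely different route from the paper's. The paper runs the induction the other way around: to bound $\|T^{\beta+e_j}f\|_{\mathcal H_s}^2$ it first applies the inductive hypothesis to $T_jf$ (at the same parameter $s$), and only then disposes of the resulting term $s^{d(|\beta|)}\|T_jf\|^2_{\mathcal H_{2^{|\beta|}s}}$ by one integration by parts in which the Cauchy--Schwarz inequality is weighted so that the entire prefactor $s^{d(|\beta|)}$ lands, squared, on $\|f\|^2_{\mathcal H_{2^{|\beta|+1}s}}$ while the norms $\|T_jf\|^2_{L^2(dw)}$ and $\|T_j^2f\|^2_{L^2(dw)}$ appear with constant $1$; this is exactly why the recursion $d(n+1)=2d(n)+2$ arises and why the paper needs no interpolation at all. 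You instead peel the derivative off at the top order first, obtaining your estimate $(\ast)$, and then invoke the hypothesis at the doubled parameter $2s$; the price is the residual factor $s^2$ on the intermediate $L^2(dw)$-norms, which you remove by a Plancherel-side interpolation that exploits the slack $2(|\beta|+1)\le d(|\beta|)=2^{|\beta|+1}-2$ available for $|\beta|\ge 2$. Both arguments are complete: yours costs an extra (correct) interpolation step but makes visible that $d(n)$ merely needs to grow fast enough, whereas the paper's weighted Cauchy--Schwarz is tuned so that the recursion comes out exactly and the $L^2(dw)$ terms never acquire powers of $s$. Two cosmetic remarks: the paper states the recursion only ``for $n\ge 2$'', leaving $d(2)$ formally undefined --- your closed form $d(n)=2^{n+1}-2$ is clearly the intended reading --- and your convention $d(0)=0$ for the base case is consistent with it (alternatively you could start the induction at $|\beta|=1$, as the paper does, and avoid extending $d$ at all).
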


\begin{proof}
The proof goes by induction on $|\beta|$. First, let us note that $\eta(\mathbf{x}, s)=\eta(\mathbf{x}',s)$ for $\|\mathbf{x}\|=\|\mathbf{x}'\|$, so for any function $f \in C^{\infty}_c(\mathbb{R}^N)$  integration by parts (see~\eqref{eq:Leibniz}) gives
\begin{equation}\label{eq:parts}
\begin{split}
    &\int_{\mathbb{R}^N} T_jf(\mathbf{x})T_j\overline{f}(\mathbf{x})\eta(\mathbf x,s)\,dw(\mathbf{x})\\&=
    -\int_{\mathbb{R}^N}f(\mathbf{x})T^2_j\overline{f}(\mathbf{x})\eta (\mathbf{x}, s)\,dw(\mathbf{x})-\int_{\mathbb{R}^N}f(\mathbf{x})T_j\overline{f}(\mathbf{x})(\partial_{\mathbf{x},j}\eta)(\mathbf{x},s)\,dw(\mathbf{x}).
\end{split}
\end{equation}
The claim for $|\beta|=1$ follows from~\eqref{eq:parts}, the  Cauchy--Schwarz inequality,  and Lemma~\ref{lem:exp_derivatives}, because
\begin{align*}
    \left|\int_{\mathbb{R}^N}f(\mathbf{x})T^2_j\overline{f}(\mathbf{x})\eta (\mathbf{x},s)\,dw(\mathbf{x})\right| \leq \|T_j^2f\|_{L^2(dw)}^2+C\|f\|^2_{\mathcal H_{2s}},
\end{align*}
\begin{align*}
    \left|\int_{\mathbb{R}^N}f(\mathbf{x})T_j\overline{f}(\mathbf{x}) \partial_{\mathbf{x},j}\eta(\mathbf{x}, s)\,dw(\mathbf{x})\right| \leq \|T_jf\|_{L^2(dw)}^2+Cs^2\|f\|^2_{\mathcal H_{2s}}.
\end{align*}
Assume that \eqref{eq:mixed_1} is satisfied for $\beta \in \mathbb{N}_0^N$ such that $|\beta|=n$. Consider multi-index $\beta+e_j$, where $|\beta|=n$. Then by~\eqref{eq:mixed_1} with $f$ replaced by $T_jf$ we get 
\begin{equation}\label{eq:TTj}
    \begin{split}
        \| T^{\beta+e_j} f\|^2_{\mathcal H_s} 
        &\leq C s^{d(|\beta|)}\| T_jf\|^2_{\mathcal H_{2^{|\beta|}s}} +C\sum_{|\beta'| \leq |\beta|+1} \| T^{\beta'} T_jf\|_{L^2(dw)}^2\\&= C s^{d(|\beta|)}\| T_jf\|^2_{\mathcal H_{2^{|\beta|}s}} +C\sum_{|\beta'| \leq |\beta+e_j|+1} \| T^{\beta'}f\|_{L^2(dw)}^2.\\
    \end{split}
\end{equation}
Using again the Cauchy-Schwarz inequality together with Lemma~\ref{lem:exp_derivatives}, we obtain  
 \begin{align*}
    s^{d(|\beta|)}\left|\int_{\mathbb{R}^N}f(\mathbf{x})T^2_j\overline{f}(\mathbf{x})\eta (\mathbf{x},2^{|\beta|}s)\,dw(\mathbf{x})\right| \leq \|T_j^2f\|_{L^2(dw)}^2+C's^{2d(|\beta|)}\|f\|^2_{\mathcal H_{2^{|\beta|+1}s}} 
\end{align*}
and 
\begin{align*}
    s^{d(|\beta|)}\left|\int_{\mathbb{R}^N}f(\mathbf{x})T_j\overline{f}(\mathbf{x}) \partial_{\mathbf{x},j}\eta(\mathbf{x}, 2^{|\beta|}s)\,dw(\mathbf{x})\right| \leq \|T_jf\|_{L^2(dw)}^2+C's^{2d(|\beta|)+2}\|f\|^2_{\mathcal H_{2^{|\beta|+1}s}}.
\end{align*}
Hence, repeating the calculation presented in \eqref{eq:parts} we get 
    \begin{equation}\label{eq:Tj}
 s^{d(|\beta|)}\| T_jf\|^2_{\mathcal H_{2^{|\beta|}s}} \leq C'' s^{2d(|\beta|)+2} \| f\|^2_{\mathcal H_{2^{{|\beta|+1}}s}} +\| T^2_jf\|_{L^2(dw)}^2 + \| T_jf\|_{L^2(dw)}^2. 
    \end{equation}
 Now  \eqref{eq:Tj} together with \eqref{eq:TTj}    completes the proof.
\end{proof}

} 
\begin{lema}\label{lem:spectral}
Let  $\varepsilon \in \{0,\varepsilon_0\}$ and $\beta\in\mathbb N_0^N$. There are constants $C,\lambda_0>0$, and $M \in \mathbb{N}$ such that for all $\lambda>\lambda_0$ and $f \in C_c^\infty (\mathbb{R}^N)$, we have
\begin{align*}
    \|T^\beta f\|_{L^2(dw)} \leq C\|(\lambda I-A^{(\varepsilon)})^Mf\|_{L^2(dw)},
\end{align*}
\end{lema}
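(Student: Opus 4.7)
The plan is to diagonalize $A^{(\varepsilon)}$ by the Dunkl transform and reduce the entire lemma to a pointwise inequality between polynomial symbols. The formula for $b_{0,\varepsilon}$ displayed just before Proposition~\ref{propo:b_0} shows that, through $\mathcal{F}$, the operator $A^{(\varepsilon)}$ is unitarily equivalent to multiplication by
\[
p(\xi) \;=\; -\sum_{j=1}^m \langle \zeta_j,\xi\rangle^{2\ell} \;+\; \varepsilon\,\|\xi\|^2,
\]
i.e.\ $\mathcal{F}(A^{(\varepsilon)} f)(\xi) = p(\xi)\,\mathcal{F}f(\xi)$. Since the Dunkl transform preserves $\mathcal{S}(\mathbb{R}^N)$, for any $f\in C_c^\infty(\mathbb{R}^N)$ the transform $\mathcal{F}f$ is Schwartz, so $p(\xi)^k\mathcal{F}f(\xi)\in L^2(dw)$ for every $k\in\mathbb{N}$, and hence $f\in D((A^{(\varepsilon)})^M)$ for every $M$.

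The first key step is the lower symbol bound
\[
\lambda - p(\xi) \;\geq\; c_\lambda\,(1+\|\xi\|^{2\ell}), \qquad \xi\in\mathbb{R}^N,
\]
valid for all sufficiently large $\lambda$. Because $\zeta_1,\ldots,\zeta_m$ span $\mathbb{R}^N$, the nonnegative homogeneous polynomial $\xi\mapsto\sum_j\langle \zeta_j,\xi\rangle^{2\ell}$ vanishes only at the origin, so by a compactness--homogeneity argument $\sum_j\langle\zeta_j,\xi\rangle^{2\ell}\geq c\|\xi\|^{2\ell}$ for some $c>0$. The perturbation $\varepsilon\|\xi\|^2$ is of strictly lower order (for $\ell=1$ one additionally exploits that $\varepsilon_0$ may be chosen smaller if necessary). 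This shows $p$ is bounded above by some $\lambda_0>0$ and yields the displayed two-sided bound for every $\lambda>\lambda_0$.

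To conclude, choose $M\in\mathbb{N}$ with $2\ell M\geq|\beta|$. Then pointwise
\[
|\xi^\beta|\leq\|\xi\|^{|\beta|}\leq 1+\|\xi\|^{2\ell M}\leq 2^M(1+\|\xi\|^{2\ell})^M\leq C(\lambda-p(\xi))^M.
\]
Iterating the transform rule \eqref{eq:transform_deriv} gives $|\mathcal{F}(T^\beta f)(\xi)|=|\xi^\beta|\,|\mathcal{F}f(\xi)|$, so Plancherel \eqref{eq:Plancherel} combined with the previous estimate yields
\[
\|T^\beta f\|_{L^2(dw)} = \|\xi^\beta\mathcal{F}f\|_{L^2(dw)} \leq C\,\|(\lambda-p(\cdot))^M\mathcal{F}f\|_{L^2(dw)} = C\,\|(\lambda I - A^{(\varepsilon)})^M f\|_{L^2(dw)},
\]
which is the required estimate. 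I do not anticipate any serious obstacle; the only delicate point is the lower bound $\lambda_0-p(\xi)\gtrsim 1+\|\xi\|^{2\ell}$, which is routine once the spanning property of $\{\zeta_j\}$ is exploited.
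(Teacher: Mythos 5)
Your argument is correct and is exactly the route the paper takes: the paper's proof consists of identifying $(\lambda I-A^{(\varepsilon)})^M$ with $(\lambda I-L^{(\varepsilon)})^M$ on $C_c^\infty(\mathbb{R}^N)$ and then invoking Plancherel \eqref{eq:Plancherel} and the transform rule \eqref{eq:transform_deriv}, i.e.\ precisely the symbol comparison $|\xi^\beta|\leq C(\lambda-p(\xi))^M$ that you spell out (with the lower bound $\lambda-p(\xi)\gtrsim 1+\|\xi\|^{2\ell}$ coming from the spanning property and the smallness of $\varepsilon_0$, which is already guaranteed by the G\aa rding inequality of Proposition~\ref{propo:b_0}). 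The only cosmetic point is to fix the constant at $\lambda=\lambda_0$ and note that $\lambda\mapsto\lambda-p(\xi)$ is increasing, so that $C$ is uniform over all $\lambda>\lambda_0$ as the statement requires.
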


\begin{proof}
Let us recall that $(\lambda I-A^{(\varepsilon)})^Mf=(\lambda I-L^{(\varepsilon)})^Mf$ for $f \in C^{\infty}_c(\mathbb{R}^N)$. The lemma is a consequence of~\eqref{eq:Plancherel} and  \eqref{eq:transform_deriv}.
\end{proof}

Combination  of Lemma~\ref{lem:mixed_norms} and Lemma~\ref{lem:spectral} leads to the following corollary.
\begin{coro}\label{coro:higher_derivatives}
Let $\varepsilon \in \{0,\varepsilon_0\}$ and $\beta  \in \mathbb{N}_0^N$. There are constants $C,\lambda_0 >0$, and $M=M_{\beta} \in \mathbb{N}$ such that for all $f \in C_c^\infty (\mathbb{R}^N)$, $\lambda\geq \lambda_0$,  and $s>1/4$, we have
\begin{align*}
    \|T^\beta f\|_{\mathcal H_s}^2 \leq C\|(\lambda I-A^{(\varepsilon)})^{M}f\|_{L^2(dw)}^2+Cs^{d(|\beta|)}\|f\|_{\mathcal H_{2^{|\beta|}s}}^2.
\end{align*}
\end{coro}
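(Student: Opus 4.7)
The plan is to obtain the bound by directly combining the two preceding lemmas. Applied to $f\in C_c^\infty(\mathbb R^N)$, Lemma~\ref{lem:mixed_norms} yields
$$\|T^\beta f\|_{\mathcal H_s}^2 \leq C\,s^{d(|\beta|)}\|f\|_{\mathcal H_{2^{|\beta|}s}}^2 + C\sum_{|\beta'|\leq|\beta|+1}\|T^{\beta'}f\|_{L^2(dw)}^2,$$
with $C$ independent of $s$. The first summand already matches the desired right-hand side, so the whole task reduces to absorbing the finitely many $L^2(dw)$-terms into a single expression of the form $C\|(\lambda I-A^{(\varepsilon)})^{M}f\|_{L^2(dw)}^2$.

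For every multi-index $\beta'$ with $|\beta'|\leq|\beta|+1$, Lemma~\ref{lem:spectral} supplies constants $C_{\beta'},\lambda_{0,\beta'}>0$ and an integer $M_{\beta'}\in\mathbb N$ with
$$\|T^{\beta'}f\|_{L^2(dw)}\leq C_{\beta'}\bigl\|(\lambda I-A^{(\varepsilon)})^{M_{\beta'}}f\bigr\|_{L^2(dw)} \quad \text{for } \lambda>\lambda_{0,\beta'}.$$
I would then set $M_\beta:=\max_{|\beta'|\leq|\beta|+1}M_{\beta'}$ and $\lambda_0:=\max\bigl\{1,\max_{|\beta'|\leq|\beta|+1}\lambda_{0,\beta'}\bigr\}$, and sum the resulting estimates over the finitely many $\beta'$.

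The only point requiring a brief justification is the passage from each $M_{\beta'}$ to the common exponent $M_\beta$. The operator $A^{(\varepsilon)}$ is self-adjoint on $L^2(dw)$ and, in view of~\eqref{eq:Plancherel} and~\eqref{eq:q_tepsilon}, it acts as multiplication via the Dunkl transform by the real symbol $m_\varepsilon(\xi)=-\sum_{j=1}^m\langle\zeta_j,\xi\rangle^{2\ell}+\varepsilon\|\xi\|^2$, which is bounded above. Enlarging $\lambda_0$ if necessary, we may assume $\lambda-m_\varepsilon(\xi)\geq 1$ for every $\xi\in\mathbb R^N$ and every $\lambda\geq\lambda_0$. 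Plancherel then gives
$$\bigl\|(\lambda I-A^{(\varepsilon)})^{M_{\beta'}}f\bigr\|_{L^2(dw)}^2 = \int_{\mathbb R^N}(\lambda-m_\varepsilon(\xi))^{2M_{\beta'}}|\mathcal Ff(\xi)|^2\,dw(\xi) \leq \bigl\|(\lambda I-A^{(\varepsilon)})^{M_\beta}f\bigr\|_{L^2(dw)}^2,$$
using that $(\lambda-m_\varepsilon(\xi))^{M_\beta-M_{\beta'}}\geq 1$ on the spectrum. Chaining these estimates with the output of Lemma~\ref{lem:mixed_norms} produces the corollary with a single constant $C$ depending only on $\beta$.

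I do not expect any genuine obstacle: the hard analytic work has already been carried out in Lemma~\ref{lem:mixed_norms} (the uniformity in $s$) and Lemma~\ref{lem:spectral} (control of $T^{\beta'}f$ by resolvent powers). The remaining work is bookkeeping to unify the exponents, which is a one-line spectral calculus argument thanks to self-adjointness of $A^{(\varepsilon)}$.
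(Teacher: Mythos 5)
Your proof is correct and follows exactly the paper's route: the paper proves the corollary simply by declaring it a combination of Lemma~\ref{lem:mixed_norms} and Lemma~\ref{lem:spectral}. Your additional spectral-calculus remark unifying the exponents $M_{\beta'}$ into a single $M_\beta$ is a valid filling-in of a detail the paper leaves implicit.
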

{
\begin{lema}\label{lem:T_j(fe)inL2}
 Let $\varepsilon \in \{0,\varepsilon_0\}$ and $\beta \in \mathbb{N}_0^N$. There are  constants $C,\lambda_0>0$, and $M=M_\beta>0$ such that for all $s>1/4$, $\lambda\geq \lambda_0$, and $f\in C_c^\infty (\mathbb{R}^N)$  we have
\begin{align*}
\|T^{\beta}(f(\cdot)\eta(\cdot,s))\|_{L^2(dw)}^2 \leq Cs^{2|\beta|}\|(\lambda I-A^{(\varepsilon)})^{M}f\|_{L^2(dw)}^2+Cs^{2|\beta|+d(|\beta|)}\|f\|_{\mathcal H_{2^{|\beta|+1}s}}^2.
\end{align*}
\end{lema}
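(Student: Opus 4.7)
The plan is to expand $T^{\beta}(f(\cdot)\eta(\cdot,s))$ via Proposition~\ref{pro:beta}, bound each resulting term in $L^{2}(dw)$ by exploiting that $\eta(\mathbf{x},s)$ is radial (so it is compatible with the group action by $G$), and then reduce everything to $\mathcal{H}_s$-norms of $T^{\beta'}f$ for $|\beta'|\leq|\beta|$, on which Corollary~\ref{coro:higher_derivatives} is available.

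First I would apply Proposition~\ref{pro:beta} to write
\begin{equation*}
T^{\beta}(f(\cdot)\eta(\cdot,s))(\mathbf{x})=T^{\beta}f(\mathbf{x})\,\eta(\mathbf{x},s)+\sum_{\sigma\in G}\sum_{|\beta'|<|\beta|}s^{|\beta|-|\beta'|}(T^{\beta'}f)(\sigma(\mathbf{x}))\,\phi_{\beta,\beta',\sigma}(\mathbf{x},s),
\end{equation*}
where $|\phi_{\beta,\beta',\sigma}(\mathbf{x},s)|\leq C\eta(\mathbf{x},s)$ by~\eqref{eq:phi_assumption}. Squaring and integrating, the elementary inequality $(a_{1}+\ldots+a_{n})^{2}\leq n(a_{1}^{2}+\ldots+a_{n}^{2})$ reduces the problem to estimating the main term $\|T^{\beta}f\cdot\eta(\cdot,s)\|_{L^{2}(dw)}^{2}$ and the lower order terms $s^{2(|\beta|-|\beta'|)}\|(T^{\beta'}f)\circ\sigma\cdot\eta(\cdot,s)\|_{L^{2}(dw)}^{2}$ separately.

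For the main term, the pointwise bound $\eta(\mathbf{x},s)^{2}\leq e^{2}\eta(\mathbf{x},2s)$ from~\eqref{eq:comparison1} gives $\|T^{\beta}f\cdot\eta(\cdot,s)\|_{L^{2}(dw)}^{2}\leq e^{2}\|T^{\beta}f\|_{\mathcal{H}_{2s}}^{2}$, and then Corollary~\ref{coro:higher_derivatives} applied at scale $2s$ produces exactly a contribution of the form $C\|(\lambda I-A^{(\varepsilon)})^{M}f\|_{L^{2}(dw)}^{2}+Cs^{d(|\beta|)}\|f\|_{\mathcal{H}_{2^{|\beta|+1}s}}^{2}$, which is already absorbed by the right-hand side of the claim. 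For each lower order term, I would change variables $\mathbf{x}\mapsto\sigma^{-1}(\mathbf{x})$ in the integral (using that $dw$ is $G$-invariant) and use that $\eta(\mathbf{x},s)$ depends only on $\|\mathbf{x}\|$; this collapses the $\sigma$-dependence and yields the bound $Cs^{2(|\beta|-|\beta'|)}\|T^{\beta'}f\|_{\mathcal{H}_{2s}}^{2}$. Applying Corollary~\ref{coro:higher_derivatives} once more at scale $2s$ with multi-index $\beta'$ majorizes this by $Cs^{2(|\beta|-|\beta'|)}\|(\lambda I-A^{(\varepsilon)})^{M_{\beta'}}f\|_{L^{2}(dw)}^{2}+Cs^{2(|\beta|-|\beta'|)+d(|\beta'|)}\|f\|_{\mathcal{H}_{2^{|\beta'|+1}s}}^{2}$.

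Finally, I would compare the exponents of $s$ with the target ones. Since $|\beta'|<|\beta|$ and $s>1/4$, one has $s^{2(|\beta|-|\beta'|)}=s^{2|\beta|}\cdot s^{-2|\beta'|}\leq 4^{2|\beta'|}s^{2|\beta|}$, so the first piece fits into $Cs^{2|\beta|}\|(\lambda I-A^{(\varepsilon)})^{M}f\|_{L^{2}(dw)}^{2}$ after taking $M$ large enough to dominate all the $M_{\beta'}$ (this uses that $(\lambda I-A^{(\varepsilon)})^{-1}$ is bounded on $L^{2}(dw)$ for $\lambda>\lambda_{0}$). For the second piece, since $d$ is strictly increasing, $2(|\beta|-|\beta'|)+d(|\beta'|)\leq 2|\beta|+d(|\beta|)$, hence again $s^{2(|\beta|-|\beta'|)+d(|\beta'|)}\leq C s^{2|\beta|+d(|\beta|)}$ on $s>1/4$; combined with $\|f\|_{\mathcal{H}_{2^{|\beta'|+1}s}}\leq\|f\|_{\mathcal{H}_{2^{|\beta|+1}s}}$ from~\eqref{eq:inclusion}, this absorbs the lower order contribution. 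The only mildly delicate bookkeeping is keeping the exponents of $s$ and the weight level $2^{|\beta|+1}s$ in line for all $s>1/4$, but because $s$ is bounded below, the powers are all comparable up to constants, and no genuine obstacle arises.
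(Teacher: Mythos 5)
Your proposal is correct and follows essentially the same route as the paper: expand $T^{\beta}(f\eta(\cdot,s))$ via Proposition~\ref{pro:beta}, use the bounds on $\phi_{\beta,\beta',\sigma}$, the $G$-invariance of $dw$, and $\eta^{2}(\cdot,s)\leq e^{2}\eta(\cdot,2s)$ to reduce to $\sum_{|\beta'|\leq|\beta|}s^{2(|\beta|-|\beta'|)}\|T^{\beta'}f\|_{\mathcal H_{2s}}^{2}$, and then apply Corollary~\ref{coro:higher_derivatives} to each term. The exponent and weight-level bookkeeping you carry out (using $s>1/4$, the monotonicity of $d$, and \eqref{eq:inclusion}) is exactly what the paper leaves implicit.
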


\begin{proof} 
By Proposition~\ref{pro:beta} we get
\begin{align*}
    \|T^{\beta}(f(\cdot)\eta(\cdot,s))\|_{L^2(dw)}^2 \leq C\sum_{|\beta'| \leq |\beta|}s^{2(|\beta|-|\beta'|)}\|T^{\beta'}f\|_{\mathcal{H}_{2s}}^2.
\end{align*}
Then, applying Corollary~\ref{coro:higher_derivatives} to each term of the sum we obtain the claim.
\end{proof}

\begin{lema}\label{lem:pointwise_pre}
Let $\varepsilon \in \{0,\varepsilon_0\}$ and $\beta \in \mathbb{N}_0^N$. There are constants $C,c>0$ such that for all $s>1/4$, $f \in \mathcal H_{2^{|\beta|+1}s}$, and $1/2<t<2$ we have
\begin{equation}
\|T^\beta ((S_t^{(\varepsilon)}f)(\cdot)\eta(\cdot ,s))\|_{L^2(dw)} \leq C\exp(cs^{2\ell})\|f\|_{\mathcal H_{2^{|\beta|+1}s}}.
\end{equation}
\end{lema}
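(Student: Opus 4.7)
The plan is to apply Lemma~\ref{lem:T_j(fe)inL2} with its input replaced by $g = S_t^{(\varepsilon)}f$, so that
\begin{align*}
    \|T^{\beta}((S_t^{(\varepsilon)}f)\eta(\cdot,s))\|_{L^2(dw)}^2 &\leq Cs^{2|\beta|}\|(\lambda I - A^{(\varepsilon)})^{M} S_t^{(\varepsilon)}f\|_{L^2(dw)}^2 \\
    &\quad + Cs^{2|\beta|+d(|\beta|)}\|S_t^{(\varepsilon)}f\|_{\mathcal{H}_{2^{|\beta|+1}s}}^2,
\end{align*}
and then to bound each of the two summands on the right by $C\exp(cs^{2\ell})\|f\|_{\mathcal{H}_{2^{|\beta|+1}s}}^2$. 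The polynomial factors $s^{2|\beta|}$ and $s^{2|\beta|+d(|\beta|)}$ can be absorbed into $\exp(cs^{2\ell})$ after enlarging $c$, since $s > 1/4$ and $|\beta|$ is fixed.

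For the second summand I invoke Theorem~\ref{teo:one_semigroup}, which identifies $S_t^{(\varepsilon)}$ acting on $\mathcal{H}_{2^{|\beta|+1}s}$ with the semigroup $\{S_t^{\{\varepsilon,\,2^{|\beta|+1}s\}}\}_{t\geq 0}$ produced by Theorem~\ref{teo:lions_app}. The growth estimate \eqref{eq:semigroup_growth}, applied with $s$ replaced by $2^{|\beta|+1}s$, then yields
\[
\|S_t^{(\varepsilon)}f\|_{\mathcal{H}_{2^{|\beta|+1}s}} \leq \exp\bigl(c_0 (2^{|\beta|+1}s)^{2\ell}\, t\bigr)\|f\|_{\mathcal{H}_{2^{|\beta|+1}s}} \leq \exp(c\, s^{2\ell})\|f\|_{\mathcal{H}_{2^{|\beta|+1}s}}
\]
for $1/2<t<2$, with a constant $c$ depending only on $c_0$ and $|\beta|$.

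For the first summand I use the fact, observed after Proposition~\ref{propo:b_0}, that $A^{(\varepsilon)}$ is self-adjoint on $L^2(dw)$, while the G\aa{}rding inequality \eqref{eq:Gardin_V_0} forces $\langle A^{(\varepsilon)}f,f\rangle \leq \lambda_0\|f\|_{\mathcal{H}_0}^2$, so that the spectrum of $A^{(\varepsilon)}$ is contained in a half-line $(-\infty,\lambda_0]$. By functional calculus, $(\lambda I - A^{(\varepsilon)})^M$ commutes with $S_t^{(\varepsilon)} = e^{tA^{(\varepsilon)}}$ and
\[
\|(\lambda I - A^{(\varepsilon)})^M S_t^{(\varepsilon)}f\|_{L^2(dw)} \leq \Bigl(\sup_{x \leq \lambda_0}(\lambda - x)^M e^{tx}\Bigr)\|f\|_{L^2(dw)}.
\]
Choosing $\lambda>\lambda_0$ as demanded by Lemma~\ref{lem:T_j(fe)inL2}, this supremum is finite and uniformly bounded for $1/2<t<2$. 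Combined with $\|f\|_{L^2(dw)} \leq \|f\|_{\mathcal{H}_{2^{|\beta|+1}s}}$ from \eqref{eq:inclusion}, this bounds the first summand by $C\|f\|_{\mathcal{H}_{2^{|\beta|+1}s}}^2$, and taking square roots completes the estimate.

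\emph{Main obstacle.} Lemma~\ref{lem:T_j(fe)inL2} is formulated for $f \in C_c^\infty(\mathbb{R}^N)$, whereas $S_t^{(\varepsilon)}f$ is in general merely in the Dunkl Sobolev scale (smoothed by the holomorphic semigroup but never compactly supported), so a density/approximation argument is needed to legitimise the substitution $g = S_t^{(\varepsilon)}f$. I would first approximate $f \in \mathcal{H}_{2^{|\beta|+1}s}$ by $f_n \in C_c^\infty$ in the $\mathcal{H}_{2^{|\beta|+1}s}$-norm, use the smoothing and core properties recorded in Proposition~\ref{pro:core} to approximate $S_t^{(\varepsilon)}f_n$ by $C_c^\infty$ functions in the relevant weighted Sobolev norm governed by Proposition~\ref{propo:definition_equivalence}, and pass to the limit $n \to \infty$ after invoking the continuity estimates above.
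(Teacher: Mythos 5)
Your proposal is correct and follows essentially the same route as the paper: apply Lemma~\ref{lem:T_j(fe)inL2} to $g=S_t^{(\varepsilon)}f$ after extending it from $C_c^\infty(\mathbb{R}^N)$ to $D\big((A_{2^{|\beta|+1}s}^{(\varepsilon)})^M\big)$ (which contains $S_t^{(\varepsilon)}f$ by holomorphy) via the core property of Proposition~\ref{pro:core} and the closedness of $T^\beta$ on $L^2(dw)$, and then bound the two resulting terms by the spectral theorem for the self-adjoint generator on $L^2(dw)$ and by the weighted growth estimate \eqref{eq:semigroup_growth}. The one point to watch is that the core/density argument forces $\lambda>\max(\lambda_0,c_0(2^{|\beta|+1}s)^{2\ell})$, so $\lambda$ must grow with $s$ and the spectral bound gives $C\lambda^M\|f\|_{L^2(dw)}^2$ rather than an $s$-independent constant — harmless, since $\lambda^M$ is absorbed into $\exp(cs^{2\ell})$.
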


\begin{proof} 
Let $M=M_{\beta}$ be as in Lemma~\ref{lem:T_j(fe)inL2} and   let $c_0$ be the constant from~\eqref{eq:semigroup_growth}. We claim that Lemma \ref{lem:T_j(fe)inL2} is satisfied
if $f\in D((A^{(\varepsilon)}_{s_1})^M)$, where $s_1=2^{|\beta|+1}s$, and $\lambda > \max (\lambda_0, c_0s_1^{2\ell})$. Indeed, since    $C_c^\infty(\mathbb{R}^N)$ is a core for $(\lambda I-A^{(\varepsilon)}_{s_1})^M$ (see Proposition \ref{pro:core}) and $D((A_{s_1}^{(\varepsilon)})^M )\subset   D((\lambda I-A^{(\varepsilon)}_{s_1})^M)$, there are $f_n\in C_c^\infty (\mathbb R^N)$   such that 
$$\lim_{n\to\infty} \|f_n-f\|_{\mathcal H_{s_1}}+\|(\lambda I-A^{(\varepsilon)}_{s_1})^Mf_n-(\lambda I-A^{(\varepsilon)}_{s_1})^Mf\|_{\mathcal H_{s_1}}=0. $$ 
Consequently, by~\eqref{eq:inclusion},~\eqref{eq:comparison1}, and Corollary \ref{coro:inclusions} in Appendix~\ref{sec:selfadjoint} we have
$$\lim_{n\to\infty} \|f_n\eta(\cdot, s)-f\eta(\cdot, s)\|_{L^2(dw)}+\|(\lambda I-A^{(\varepsilon)})^Mf_n-(\lambda I-A^{(\varepsilon)})^Mf\|_{L^2(dw)}=0. $$
Now the claim follows, because $T^\beta$ is  closed on $L^2(dw)$. 

Set $\lambda=\max (\lambda_0, 2 c_0s_1^{2\ell})$. If $f\in \mathcal H_{s_1}$, then $S^{(\varepsilon)}_tf\in D((A_{s_1}^{(\varepsilon)})^M)$, because $\{S^{(\varepsilon)}_t\}_{t \geq 0}$ is analytic. 
Hence, by Lemma~\ref{lem:T_j(fe)inL2},  we get
\begin{align*}
    \|T^{\beta} ((S_t^{(\varepsilon)}f)(\cdot)\eta(\cdot, s))\|_{L^2(dw)}^2 & \leq C_{\beta,M}s^{2|\beta| } \|(\lambda I-A_{s_1}^{(\varepsilon)})^{M}S^{(\varepsilon)}_tf\|_{L^2(dw)}^2\\
    &\quad +Cs^{2|\beta|+d(|\beta|)}\|S_t^{(\varepsilon)}f\|_{\mathcal H_{s_1}}^2.
\end{align*}
By Proposition~\ref{propo:b_0} and Theorem~\ref{teo:Lions} we have that $A^{(\varepsilon)}$ is the generator of the  semigroup $\{S^{(\varepsilon)}_t\}_{t \geq 0}$  of self-adjoint linear operators on $L^2(dw)$. Therefore, since $1/2 <t<2$, by the spectral theorem (or Cauchy integral formula) we obtain
\begin{align*}
   s^{2|\beta|} \|(\lambda I-A^{(\varepsilon)}_{s_1})^{M}S_t^{(\varepsilon)}f\|_{L^2(dw)}^2 & =s^{2|\beta|} \Big\|\Big(\lambda I-\frac{d}{dt}\Big)^{M}S_t^{(\varepsilon)}f\Big\|_{L^2(dw)}^2\\ & \leq Cs^{2|\beta|}\lambda^M \|f\|_{L^2(dw)}^2
  \leq C\exp(cs^{2\ell} ) \| f\|_{\mathcal H_{s_1}}^2 .
\end{align*}
Moreover, by Theorem~\ref{teo:lions_app} and the fact that $1/2<t<2$ we have
\begin{align*}
    s^{2|\beta|+d(|\beta|)}\|S_t^{(\varepsilon)}f\|_{\mathcal H_{s_1}}^2 \leq Cs^{2|\beta|+d(|\beta|)}\exp(c's^{2\ell})\|f\|_{\mathcal H_{s_1}}^2 \leq C' \exp(cs^{2\ell})\|f\|_{\mathcal H_{s_1}}^2,
\end{align*}
which completes the proof. 
\end{proof}
}
\subsection{Pointwise estimate for convolution kernels of semigroups}
{
\begin{coro}\label{coro:semigroup_pointwise}
Let $\varepsilon \in\{0,\varepsilon_0\}$. There are constants $C,c>0$ and $M \in \mathbb{N}$ such that for all $s>1/4$, $f \in \mathcal H_{2^{2M+1}s}$, $\mathbf{x} \in \mathbb{R}^N$, and $1/2<t<2$ we have
\begin{equation}
|S_t^{(\varepsilon)}f(\mathbf{x})|\leq C \exp(-s\|\mathbf{x}\|)\exp(cs^{2\ell})\|f\|_{\mathcal H_{2^{2M+1}s}}.
\end{equation}
\end{coro}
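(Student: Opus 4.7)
The plan is to upgrade the weighted $L^2(dw)$-bound of Lemma~\ref{lem:pointwise_pre} to a pointwise bound via a Sobolev-type embedding on the Dunkl-transform side, and then to trade the weight $\eta(\mathbf{x},s)$ for the factor $\exp(-s\|\mathbf{x}\|)$ using~\eqref{eq:eta_comp}.

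First, I would fix an integer $M\in\mathbb{N}$ with $4M>\mathbf{N}$, so that the function $(1+\|\xi\|^{2})^{-M}$ is square-integrable against $dw$. For any reasonable function $h$, combining the inversion formula~\eqref{eq:inverse} with the bound $|E(i\xi,\mathbf{x})|\leq 1$ from~\eqref{eq:E} and the Cauchy--Schwarz inequality yields
\begin{align*}
|h(\mathbf{x})|
&\leq c_{k}^{-1}\int_{\mathbb{R}^{N}}|\mathcal{F}h(\xi)|\,dw(\xi)\\
&\leq c_{k}^{-1}\bigl\|(1+\|\xi\|^{2})^{-M}\bigr\|_{L^{2}(dw)}\bigl\|(1+\|\xi\|^{2})^{M}\mathcal{F}h\bigr\|_{L^{2}(dw)}.
\end{align*}
By Plancherel~\eqref{eq:Plancherel} the last factor equals $\|(I-\Delta)^{M}h\|_{L^{2}(dw)}$, and expanding $(I-\Delta)^{M}$ into iterated compositions of the $T_{j}$'s (using $\Delta=\sum_{j=1}^{N}T_{j}^{2}$) gives the Sobolev-type embedding
\begin{align*}
|h(\mathbf{x})|\leq C\sum_{|\beta|\leq 2M}\|T^{\beta}h\|_{L^{2}(dw)}.
\end{align*}

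Next, I would apply this inequality to $h(\mathbf{x})=(S_{t}^{(\varepsilon)}f)(\mathbf{x})\,\eta(\mathbf{x},s)$. Each term on the right-hand side is controlled by Lemma~\ref{lem:pointwise_pre}, which yields $\|T^{\beta}h\|_{L^{2}(dw)}\leq C\exp(cs^{2\ell})\|f\|_{\mathcal{H}_{2^{|\beta|+1}s}}$. For $|\beta|\leq 2M$ the monotonicity~\eqref{eq:inclusion} gives $\|f\|_{\mathcal{H}_{2^{|\beta|+1}s}}\leq\|f\|_{\mathcal{H}_{2^{2M+1}s}}$, and therefore
\begin{align*}
|(S_{t}^{(\varepsilon)}f)(\mathbf{x})|\,\eta(\mathbf{x},s)\leq C\exp(cs^{2\ell})\|f\|_{\mathcal{H}_{2^{2M+1}s}}.
\end{align*}
Dividing by $\eta(\mathbf{x},s)$ and invoking the lower bound $\eta(\mathbf{x},s)\geq e^{s\|\mathbf{x}\|}$ from~\eqref{eq:eta_comp} finishes the proof.

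No step poses a genuine obstacle: the only technical point is justifying that the Dunkl inversion formula applies to $h=(S_{t}^{(\varepsilon)}f)\,\eta(\cdot,s)$, which reduces to checking $\mathcal{F}h\in L^{1}(dw)$, and this is immediate from the same Cauchy--Schwarz estimate together with Lemma~\ref{lem:pointwise_pre}. The substance of the argument lies in the weighted $L^{2}$-bounds already established, which the Sobolev embedding on the Dunkl side converts into the desired pointwise exponential decay.
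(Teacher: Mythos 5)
Your proposal is correct and follows essentially the same route as the paper: the Sobolev-type embedding $|g(\mathbf{x})|\leq C\|(I-\Delta)^{M}g\|_{L^{2}(dw)}$ obtained from the inversion formula, $|E(i\xi,\mathbf{x})|\leq 1$, and Cauchy--Schwarz, applied to $g=(S_{t}^{(\varepsilon)}f)\,\eta(\cdot,s)$ and combined with Lemma~\ref{lem:pointwise_pre} and the bound $\eta(\mathbf{x},s)\geq e^{s\|\mathbf{x}\|}$. The only cosmetic difference is that you expand $(I-\Delta)^{M}$ explicitly into the operators $T^{\beta}$, $|\beta|\leq 2M$, which the paper leaves implicit.
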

}
\begin{proof}
By~\eqref{eq:inverse}, the   Cauchy-Schwarz inequality,  and~\eqref{eq:Plancherel}, for $M \in \mathbb{N}$ such that $M> \mathbf{N}/2$ and for any function $g \in D(\Delta^{M})$, we have
\begin{equation}\label{eq:sobolev}
\begin{split}
|g(\mathbf{x})|&=c_k^{-1}\left|\int_{\mathbb{R}^N}E(i\xi,\mathbf{x})\mathcal{F}g(\xi)\,dw(\xi)\right|\\&= c_k^{-1}\left|\int_{\mathbb{R}^N}(1+\|\xi\|^2)^{-M}(1+\|\xi\|^2)^{M}E(i\xi,\mathbf{x})\mathcal{F}g(\xi)\,dw(\xi)\right|\\&\leq C_{M}\|(1+\|\xi\|^2)^{M}\mathcal{F}g\|_{L^2(dw)}  \\&= C_{M}'\|(I-\Delta)^{M}g\|_{L^2(dw)}.
\end{split}
\end{equation}
Therefore, if  for $f \in \mathcal H_{2^{2M+1}s}$ we plug  $g(\mathbf{x})=\eta (\mathbf{x}, s)S_t^{(\varepsilon)}f(\mathbf{x})$ in~\eqref{eq:sobolev} and use Lemma~\ref{lem:pointwise_pre}, we obtain the claim, because $\exp(s\|\mathbf x\|)\leq \eta(\mathbf x,s)$ for all $\mathbf{x} \in \mathbb{R}^N$ and $s>1/4$.
\end{proof}

\begin{lema}\label{coro:Roesler}
There is a constant $C>0$ such that for all $\mathbf{x},\xi \in \mathbb{R}^N$ we have
\begin{equation}
|E(i\xi,\mathbf{x})-1| \leq C\|\mathbf{x}\|\|\xi\|.
\end{equation}
\end{lema}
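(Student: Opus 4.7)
The approach is to apply the fundamental theorem of calculus along the segment $t \mapsto ti\xi$, $t \in [0,1]$, in the \emph{second} variable of the Dunkl kernel, combined with the gradient bound supplied by Lemma~\ref{lem:Roesler_Dunkl_kernel}.

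First, I observe that $E(\mathbf{x},0)=1$ for every $\mathbf{x}\in\mathbb{R}^N$: indeed, the defining equation $T_\xi f = \langle \xi, 0\rangle f = 0$ together with $f(0)=1$ forces $f \equiv 1$. Using the standard symmetry $E(\mathbf{x},\mathbf{y})=E(\mathbf{y},\mathbf{x})$ of the Dunkl kernel, I rewrite
\begin{equation*}
E(i\xi,\mathbf{x}) - 1 \;=\; E(\mathbf{x}, i\xi) - E(\mathbf{x}, 0),
\end{equation*}
and since $\mathbf{z}\mapsto E(\mathbf{x},\mathbf{z})$ is entire on $\mathbb{C}^N$, the fundamental theorem of calculus gives
\begin{equation*}
E(\mathbf{x}, i\xi) - E(\mathbf{x}, 0) \;=\; \int_0^1 \sum_{j=1}^N i\xi_j \, (\partial_{z_j} E)(\mathbf{x}, ti\xi)\, dt.
\end{equation*}

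Second, I apply Lemma~\ref{lem:Roesler_Dunkl_kernel} with $\nu = e_j$ and $\mathbf{z}=ti\xi$. Since ${\rm Re}(ti\xi)=0$, each partial derivative is bounded by $\|\mathbf{x}\|\exp(0)=\|\mathbf{x}\|$. Inserting this estimate into the integral and applying the Cauchy--Schwarz inequality to the sum over $j$ yields
\begin{equation*}
|E(i\xi,\mathbf{x}) - 1| \;\leq\; \int_0^1 \|\xi\|\, \sqrt{N}\, \|\mathbf{x}\|\, dt \;=\; \sqrt{N}\,\|\xi\|\,\|\mathbf{x}\|,
\end{equation*}
so the claim holds with $C=\sqrt{N}$.

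No serious obstacle is expected here; the only subtlety is that Lemma~\ref{lem:Roesler_Dunkl_kernel} furnishes derivative bounds only in the \emph{complex} second argument of $E$, so one first uses the symmetry of the Dunkl kernel to transfer the interpolation variable into that slot before differentiating.
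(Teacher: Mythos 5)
Your proof is correct and takes essentially the same route as the paper's: both move the interpolation parameter into the complex second slot of $E$ (the paper via the homogeneity $E(i\xi,\mathbf{x})=E(\xi,i\mathbf{x})$, you via the symmetry $E(i\xi,\mathbf{x})=E(\mathbf{x},i\xi)$), apply the fundamental theorem of calculus along a purely imaginary segment, and use Lemma~\ref{lem:Roesler_Dunkl_kernel} with ${\rm Re}\,\mathbf{z}=0$ together with Cauchy--Schwarz. The roles of $\mathbf{x}$ and $\xi$ are merely interchanged, and the resulting bound is identical.
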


\begin{proof}
For all $\mathbf{x},\xi \in \mathbb{R}^N$ we have
\begin{align*}
E(i\xi,\mathbf{x})-1&=E(\xi,i\mathbf{x})-E(\xi,0)=\int_{0}^{1} \frac{d}{dt}E(\xi,it\mathbf{x})\,dt=i\int_{0}^{1}\langle \nabla_{\mathbf{x}} E(\xi,it\mathbf{x}),\mathbf{x}\rangle\, dt.
\end{align*}
Therefore, by Cauchy-Schwarz inequality and Lemma~\ref{lem:Roesler_Dunkl_kernel} we get
\begin{align*}
|E(i\xi,\mathbf{x})-1| \leq C \int_0^{1}\|\nabla_{\mathbf{x}} E(\xi,it\mathbf{x})\|\|\mathbf{x}\|\,dt \leq C \|\mathbf{x}\|\|\xi\|.
\end{align*}
\end{proof}

Recall that the kernel $q_t^{(\varepsilon)}(\mathbf x)$ is given by~\eqref{eq:q_tepsilon}. Our goal is to obtain pointwise estimates of  $q_t^{(\varepsilon)}$ for $t=1$.  \begin{lema}\label{lem:L_infty}
Let $\varepsilon \in \{0,\varepsilon_0\}$. There is a constant $C>0$ such that for all $\mathbf{x} \in \mathbb{R}^N$ we have
\begin{align*}
\|\tau_{\mathbf{x}}q_1^{(\varepsilon)}-q_1^{(\varepsilon)}\|_{L^{\infty}}\leq C\|\mathbf{x}\|.
\end{align*}
\end{lema}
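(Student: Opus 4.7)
The plan is to write the difference $\tau_{\mathbf{x}}q_1^{(\varepsilon)}(\mathbf{y})-q_1^{(\varepsilon)}(\mathbf{y})$ via the Dunkl transform side and reduce it to a single integral whose integrand factors nicely. Set $F(\xi)=\mathcal{F}q_1^{(\varepsilon)}(\xi)=\exp\!\bigl(-\sum_{j=1}^{m}\langle\zeta_j,\xi\rangle^{2\ell}+\varepsilon\|\xi\|^2\bigr)$. Using~\eqref{eq:translation} for $\tau_{\mathbf{x}}q_1^{(\varepsilon)}$ and~\eqref{eq:inverse} for $q_1^{(\varepsilon)}$, I would subtract the two integral representations to obtain
\begin{equation*}
\tau_{\mathbf{x}}q_1^{(\varepsilon)}(\mathbf{y})-q_1^{(\varepsilon)}(\mathbf{y})=c_k^{-1}\int_{\mathbb{R}^N}\bigl(E(i\xi,\mathbf{x})-1\bigr)\,E(i\xi,\mathbf{y})\,F(\xi)\,dw(\xi).
\end{equation*}

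Next I would estimate the two factors in front of $F(\xi)$ pointwise. By~\eqref{eq:E} we have $|E(i\xi,\mathbf{y})|\leq 1$ uniformly in $\mathbf{y}$, and by Lemma~\ref{coro:Roesler} we have $|E(i\xi,\mathbf{x})-1|\leq C\|\mathbf{x}\|\,\|\xi\|$. Plugging these in gives
\begin{equation*}
\sup_{\mathbf{y}\in\mathbb{R}^N}\bigl|\tau_{\mathbf{x}}q_1^{(\varepsilon)}(\mathbf{y})-q_1^{(\varepsilon)}(\mathbf{y})\bigr|\leq C\|\mathbf{x}\|\int_{\mathbb{R}^N}\|\xi\|\,|F(\xi)|\,dw(\xi),
\end{equation*}
so the whole matter reduces to showing that the last integral is finite.

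For that I would exploit the spanning hypothesis on $\zeta_1,\dots,\zeta_m$: compactness on the unit sphere gives $\sum_{j=1}^{m}\langle\zeta_j,\xi\rangle^{2\ell}\geq c_0\|\xi\|^{2\ell}$. Since $2\ell\geq 2$, for $\|\xi\|$ large the term $\varepsilon\|\xi\|^2$ is dominated by, say, $\tfrac{c_0}{2}\|\xi\|^{2\ell}$, hence $|F(\xi)|\lesssim \exp(-\tfrac{c_0}{2}\|\xi\|^{2\ell})$ outside a compact set while $F$ is bounded everywhere. Thus $\|\xi\|F(\xi)$ is rapidly decreasing, and integrability against the polynomially growing measure $dw$ follows from~\eqref{eq:behavior}. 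This gives the desired bound $C\|\mathbf{x}\|$ independent of $\mathbf{y}$.

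Since every step uses results already established in the excerpt, there is no real obstacle; the only point requiring a moment of care is verifying the integrability of $\|\xi\|F(\xi)$ with respect to $dw$, which, as noted, is immediate once one observes that the $\varepsilon\|\xi\|^2$ contribution in the exponent is harmless compared to the leading order $-c_0\|\xi\|^{2\ell}$ coming from the coercivity of $\sum_j\langle\zeta_j,\cdot\rangle^{2\ell}$.
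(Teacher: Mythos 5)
Your proposal is correct and follows essentially the same route as the paper: the same Dunkl-transform representation of $\tau_{\mathbf{x}}q_1^{(\varepsilon)}-q_1^{(\varepsilon)}$, combined with Lemma~\ref{coro:Roesler} and the bound $|E(i\xi,\mathbf{y})|\leq 1$ from Lemma~\ref{lem:Roesler_Dunkl_kernel}. The only cosmetic difference is that the paper disposes of the integrability of $\|\xi\|\,|\mathcal{F}q_1^{(\varepsilon)}(\xi)|$ by simply invoking $q_1^{(\varepsilon)}\in\mathcal{S}(\mathbb{R}^N)$ (hence $\mathcal{F}q_1^{(\varepsilon)}\in\mathcal{S}(\mathbb{R}^N)$), whereas you re-derive it from coercivity; note that for $\ell=1$ your domination of $\varepsilon\|\xi\|^2$ by $\tfrac{c_0}{2}\|\xi\|^{2\ell}$ uses that $\varepsilon_0$ has been fixed small, which is already built into the paper's choice of $\varepsilon_0$.
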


\begin{proof}
For any $\mathbf{y} \in \mathbb{R}^N$ we have
\begin{align*}
\tau_{\mathbf{x}}q_1^{(\varepsilon)}(-\mathbf{y})-q_1^{(\varepsilon)}(-\mathbf{y})=c_k^{-1}\int_{\mathbb{R}^N}\mathcal{F}q_1^{(\varepsilon)}(\xi)E(i\xi,-\mathbf{y})[E(i\xi,\mathbf{x})-1]\,dw(\xi).
\end{align*}
Therefore, the claim is a consequence of Lemma~\ref{coro:Roesler}, Lemma~\ref{lem:Roesler_Dunkl_kernel}, and the fact that $q_1^{(\varepsilon)} \in \mathcal{S}(\mathbb{R}^N)$, so  $\mathcal{F}(q_1^{(\varepsilon)}) \in \mathcal{S}(\mathbb{R}^N)$ as well.
\end{proof}

Theorem~\ref{teo:main_no_translation} is a special case (for $\varepsilon=0$) of the theorem below.
\begin{teo}\label{teo:main_1}
Let $\varepsilon \in \{0,\varepsilon_0\}$. There are constants $C,c>0$ such that for all $\mathbf{x} \in \mathbb{R}^N$ we have
\begin{equation}\label{eq:estimate_q} 
|q_1^{(\varepsilon)}(\mathbf{x})|\leq C \exp(-c\|\mathbf{x}\|^{\frac{2\ell}{2\ell-1}}).
\end{equation}
\end{teo}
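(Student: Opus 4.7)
The plan is to approximate $q_1^{(\varepsilon)}(\mathbf{x})$ by $S_1^{(\varepsilon)} f_\delta(\mathbf{x})$, where $f_\delta$ is a nonnegative bump at the origin with $\int f_\delta\,dw = 1$ and scale $\delta$. I would apply Corollary~\ref{coro:semigroup_pointwise} (with $t=1$) to control $|S_1^{(\varepsilon)} f_\delta(\mathbf{x})|$, use Lemma~\ref{lem:L_infty} to bound the approximation error, and then optimize jointly over $(\delta, s)$.

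By the symmetry $q_1^{(\varepsilon)}(\mathbf{x},\mathbf{y}) = q_1^{(\varepsilon)}(\mathbf{y},\mathbf{x})$ (since $S_1^{(\varepsilon)}$ is self-adjoint) combined with the evenness $q_1^{(\varepsilon)}(-\mathbf{x}) = q_1^{(\varepsilon)}(\mathbf{x})$ (both coming from the fact that $\mathcal{F}(q_1^{(\varepsilon)})$ is real and even), Lemma~\ref{lem:L_infty} yields
\[
|q_1^{(\varepsilon)}(\mathbf{x},\mathbf{y}) - q_1^{(\varepsilon)}(\mathbf{x})| \;=\; |\tau_\mathbf{y} q_1^{(\varepsilon)}(-\mathbf{x}) - q_1^{(\varepsilon)}(-\mathbf{x})| \;\leq\; \|\tau_\mathbf{y} q_1^{(\varepsilon)} - q_1^{(\varepsilon)}\|_\infty \;\leq\; C\|\mathbf{y}\|.
\]
Taking $f_\delta = w(B(0,\delta))^{-1}\chi_{B(0,\delta)}$, this gives $|q_1^{(\varepsilon)}(\mathbf{x}) - S_1^{(\varepsilon)} f_\delta(\mathbf{x})| \leq C\delta$. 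Moreover, by \eqref{eq:behavior}, $\|f_\delta\|_{L^2(dw)}^2 \leq C\delta^{-\mathbf{N}}$, and since $\eta(\mathbf{y},2^{2M+1}s) \leq e^{1+2^{2M+1}s\delta}$ on $B(0,\delta)$, we have $\|f_\delta\|_{\mathcal{H}_{2^{2M+1}s}} \leq C\delta^{-\mathbf{N}/2}e^{2^{2M}s\delta}$. Feeding this into Corollary~\ref{coro:semigroup_pointwise} yields
\[
|q_1^{(\varepsilon)}(\mathbf{x})| \;\leq\; C\delta^{-\mathbf{N}/2}e^{2^{2M}s\delta}\exp(-s\|\mathbf{x}\|+cs^{2\ell}) + C\delta.
\]

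For the final optimization, I would take $s = s(\mathbf{x}) \sim \|\mathbf{x}\|^{1/(2\ell-1)}$, which is the critical point of $-s\|\mathbf{x}\|+cs^{2\ell}$ and produces the value $-c_*\|\mathbf{x}\|^{2\ell/(2\ell-1)}$; then I would pick $\delta$ small enough that $e^{2^{2M}s\delta} \leq \sqrt{e}$, balancing the two competing terms via $\delta = \exp\!\bigl((-s\|\mathbf{x}\|+cs^{2\ell})/(\mathbf{N}/2+1)\bigr)$. This yields
\[
|q_1^{(\varepsilon)}(\mathbf{x})| \;\leq\; C\exp\!\Big(-\tfrac{c_*}{\mathbf{N}/2+1}\|\mathbf{x}\|^{2\ell/(2\ell-1)}\Big),
\]
valid for $\|\mathbf{x}\|$ large enough that the constraints $s > 1/4$ and $\delta \leq (2^{2M+1}s)^{-1}$ are met; these reduce to $\|\mathbf{x}\|^{2\ell/(2\ell-1)} \gtrsim \log\|\mathbf{x}\|$, which holds for $\|\mathbf{x}\|$ large. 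For the remaining bounded range of $\mathbf{x}$, the estimate is trivial since $q_1^{(\varepsilon)}$ is bounded (being Schwartz). The main technical subtlety I would expect is the careful bookkeeping of the two-parameter optimization and ensuring uniformity of constants in $\varepsilon \in \{0,\varepsilon_0\}$, but no deep conceptual obstacles beyond that.
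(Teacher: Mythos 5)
Your proposal is correct and follows essentially the same route as the paper: the paper also compares $q_1^{(\varepsilon)}(\mathbf{x})$ with $w(B(0,r))^{-1}S_1^{(\varepsilon)}\chi_{B(0,r)}(\mathbf{x})$, bounds the error by $Cr$ via Lemma~\ref{lem:L_infty}, applies Corollary~\ref{coro:semigroup_pointwise}, and performs the identical two-parameter optimization in $(r,s)$ with $s\sim\|\mathbf{x}\|^{1/(2\ell-1)}$. The only cosmetic difference is that you pass through the symmetry $q_1^{(\varepsilon)}(\mathbf{x},\mathbf{y})=q_1^{(\varepsilon)}(\mathbf{y},\mathbf{x})$ and evenness, while the paper uses the direct identity $\tau_{\mathbf{x}}q_1^{(\varepsilon)}(-\mathbf{y})=\tau_{-\mathbf{y}}q_1^{(\varepsilon)}(\mathbf{x})$.
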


\begin{proof} Since $q_1^{(\varepsilon)}\in\mathcal S(\mathbb R^N)$, it suffices to prove \eqref{eq:estimate_q} for large $\| \mathbf x\|$.   For any $\mathbf{x} \in \mathbb{R}^N$, $s>1$ and $r>0$ we write 
\begin{align*}
q_1^{(\varepsilon)}(\mathbf{x})&=\frac{1}{w(B(0,r))}\int_{B(0,r)}q_1^{(\varepsilon)}(\mathbf{x})\,dw(\mathbf{y})=\frac{1}{w(B(0,r))}\int_{B(0,r)}[q_1^{(\varepsilon)}(\mathbf{x})-\tau_{-\mathbf{y}}q_1^{(\varepsilon)}(\mathbf{x})]\,dw(\mathbf{y})\\&+\frac{1}{w(B(0,r))}\int_{B(0,r)}\tau_{-\mathbf{y}}q_1^{(\varepsilon)}(\mathbf{x})\,dw(\mathbf{y})=J_1+J_2.
\end{align*}
By Lemma~\ref{lem:L_infty} we have
\begin{equation}\label{eq:main_1}
|J_1| \leq C\frac{1}{w(B(0,r))}\int_{B(0,r)}\|\mathbf{y}\|\,dw(\mathbf{y}) \leq Cr.
\end{equation}
Furthermore, it follows by the definition of the Dunkl translation that
\begin{align*}
    \int_{B(0,r)}\tau_{-\mathbf{y}}q_1^{(\varepsilon)}(\mathbf{x})\,dw(\mathbf{y})=\int_{B(0,r)}\tau_{\mathbf{x}}q_1^{(\varepsilon)}(-\mathbf{y})\,dw(\mathbf{y})=S_1^{(\varepsilon)}\chi_{B(0,r)}(\mathbf{x}).
\end{align*}
 Therefore, by Corollary~\ref{coro:semigroup_pointwise} and \eqref{eq:eta_comp}  we get that there is $M \in \mathbb{N}$ such that
\begin{equation}\label{eq:main_2}
\begin{split}
|J_2| &=w(B(0,r))^{-1}|S_1^{(\varepsilon)}\chi_{B(0,r)}(\mathbf{x})|\\&\leq Cw(B(0,r))^{-1}\exp(cs^{2\ell})\exp(-s\|\mathbf{x}\|)\|\chi_{B(0,r)}\|_{\mathcal H_{2^{2M+1}s}}. \\&\leq Cw(B(0,r))^{-1}\exp(cs^{2\ell})\exp(-s\|\mathbf{x}\|)w(B(0,r))^{1/2}\exp(2^{2M}sr) \\&\leq Cr^{-\frac{\mathbf{N}}{2}}\exp(c's^{2\ell})\exp(-s\|\mathbf{x}\|),
\end{split}
\end{equation}
where in the last inequality we have used ~\eqref{eq:behavior}. Therefore, taking into account~\eqref{eq:main_1} and~\eqref{eq:main_2} we obtain
\begin{equation}\label{eq:main_3}
|q_1^{(\varepsilon)}(\mathbf{x})| \leq \big(r+r^{-\frac{\mathbf{N}}{2}}\exp(c's^{2\ell})\exp(-s\|\mathbf{x}\|)\big).
\end{equation}
Set $$r=\big(\exp(c's^{2\ell})\exp(-s\|\mathbf{x}\|)\big)^{\frac{1}{\mathbf{N}/2+1}},$$
then~\eqref{eq:main_3} reduces to
\begin{equation}
|q_1^{(\varepsilon)}(\mathbf{x})| \leq C \big(\exp(c's^{2\ell})\exp(-s\|\mathbf{x}\|)\big)^{\frac{1}{\mathbf{N}/2+1}}.
\end{equation}
Finally, setting $s=\delta\|\mathbf{x}\|^{1/(2\ell-1)}$ for $\delta>0$ small enough we obtain the claim.
\end{proof}

\subsection{Pointwise estimations for the integral kernel of the semigroup}
The following proposition was proved in~\cite[Proposition 4.4]{DzHej2}.
\begin{propo}\label{propo:compact_supports}
There is a constant $C>0$ such that for any  $r_1,r_2>0$, any $f\in L^1(dw)$ such that $\supp f \subseteq B(0,r_2)$, any continuous radial function $\phi$ such that $\text{\rm supp}\, \phi \subseteq B(0,r_1)$, and for all $\mathbf{y} \in \mathbb{R}^N$ we have
\begin{align*}
\|\tau_{\mathbf{y}}(f * \phi)\|_{L^1(dw)} \leq C (r_1(r_1+r_2))^{\frac{\mathbf{N}}{2}}
\| \phi\|_{L^{\infty}}\|f\|_{L^1(dw)}.
\end{align*}
\end{propo}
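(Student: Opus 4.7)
The plan is to combine four ingredients: (i) a support description of $f*\phi$ coming from the radiality of $\phi$; (ii) a Young-type inequality yielding an $L^2(dw)$-bound for $f*\phi$; (iii) the $L^2$-contractivity of $\tau_{\mathbf{y}}$; and (iv) a Cauchy--Schwarz passage from $L^2$ to $L^1$ via a further support analysis.

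For (i), I would first show that $\supp(f*\phi)\subseteq B(0,r_1+r_2)$. Writing $(f*\phi)(\mathbf{x})=\int f(\mathbf{z})\,\tau_{\mathbf{x}}\phi(-\mathbf{z})\,dw(\mathbf{z})$ and invoking R\"osler's formula \eqref{eq:translation-radial}, valid since $\phi$ is radial, one sees that $\tau_{\mathbf{x}}\phi(-\mathbf{z})$ vanishes unless $d(\mathbf{x},-\mathbf{z})\leq r_1$; combined with $\mathbf{z}\in B(0,r_2)$ this forces $\|\mathbf{x}\|\leq r_1+r_2$. For (ii), applying Cauchy--Schwarz in $\mathbf{z}$ to the defining integral, Fubini, and \eqref{eq:translation-bounded} gives the Young-type inequality $\|f*\phi\|_{L^2(dw)}\leq\|f\|_{L^1(dw)}\|\phi\|_{L^2(dw)}$. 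Since $\supp\phi\subseteq B(0,r_1)$, the volume estimate \eqref{eq:behavior} yields $\|\phi\|_{L^2(dw)}\lesssim r_1^{\mathbf{N}/2}\|\phi\|_{L^{\infty}}$, so
\begin{equation*}
\|f*\phi\|_{L^2(dw)}\lesssim r_1^{\mathbf{N}/2}\|\phi\|_{L^{\infty}}\|f\|_{L^1(dw)}.
\end{equation*}
Step (iii) is the contractivity $\|\tau_{\mathbf{y}}g\|_{L^2(dw)}\leq\|g\|_{L^2(dw)}$, which transports the above bound to $\tau_{\mathbf{y}}(f*\phi)$.

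The main obstacle is (iv): one needs
\begin{equation*}
\|\tau_{\mathbf{y}}(f*\phi)\|_{L^1(dw)}\lesssim (r_1+r_2)^{\mathbf{N}/2}\,\|\tau_{\mathbf{y}}(f*\phi)\|_{L^2(dw)}.
\end{equation*}
By a plain Cauchy--Schwarz this reduces to showing that $\tau_{\mathbf{y}}(f*\phi)$ is supported in a set of $w$-measure at most $C(r_1+r_2)^{\mathbf{N}}$, \emph{uniformly in $\mathbf{y}$}. Since Dunkl translations can enlarge supports in a $\mathbf{y}$-dependent manner and are not even known to act boundedly on $L^1(dw)$, obtaining such a $\mathbf{y}$-uniform support bound is delicate. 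The proof in~\cite{DzHej2} exploits the specific convolution structure: writing $\tau_{\mathbf{y}}(f*\phi)=(\tau_{\mathbf{y}}f)*\phi$ and combining the description of the supports of Dunkl translations of compactly supported functions with the radial support lemma from step~(i) delivers the required $\mathbf{y}$-uniform control. Putting (i)--(iv) together yields
\begin{equation*}
\|\tau_{\mathbf{y}}(f*\phi)\|_{L^1(dw)}\lesssim (r_1+r_2)^{\mathbf{N}/2}\cdot r_1^{\mathbf{N}/2}\|\phi\|_{L^{\infty}}\|f\|_{L^1(dw)},
\end{equation*}
which is the asserted estimate.
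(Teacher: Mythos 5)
Steps (i)--(iii) of your outline are fine (for (ii) the cleanest route is Plancherel together with $\|\mathcal F f\|_{L^\infty}\le c_k^{-1}\|f\|_{L^1(dw)}$, but your Cauchy--Schwarz/Fubini version also works once you use the $L^2$-contractivity of $\tau_{\mathbf{x}}$ rather than \eqref{eq:translation-bounded}). The genuine gap is step (iv), and it is not merely a deferred technicality: the intermediate claim you propose to establish --- that $\tau_{\mathbf{y}}(f*\phi)$ is supported in a set of $w$-measure at most $C(r_1+r_2)^{\mathbf{N}}$ uniformly in $\mathbf{y}$ --- is \emph{false} whenever $k\not\equiv 0$. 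The support theorem for Dunkl translations of functions supported in $B(0,R)$ only gives containment in $\bigcup_{\sigma\in G}B(\sigma(\mathbf{y}),R)$, and this is essentially sharp; by \eqref{eq:behavior} its measure is comparable to $R^N\prod_{\alpha\in R}(|\langle\mathbf{y},\alpha\rangle|+R)^{k(\alpha)}$, which is unbounded as $\|\mathbf{y}\|\to\infty$. No manipulation of supports (including writing $\tau_{\mathbf{y}}(f*\phi)=(\tau_{\mathbf{y}}f)*\phi$) can remove this $\mathbf{y}$-dependence, so the chain ``$L^2$-contraction $+$ uniform support bound $+$ Cauchy--Schwarz'' cannot close.

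What actually makes the argument of \cite{DzHej2} work is a compensating \emph{gain}, not a contraction, in the $L^2$ (equivalently $L^\infty$) norm of the translated radial factor. Reducing, via $\tau_{\mathbf{y}}(f*\phi)(-\mathbf{x})=\int f(\mathbf{z})\,\tau_{\mathbf{y}}\tau_{-\mathbf{z}}\phi(-\mathbf{x})\,dw(\mathbf{z})$, to estimating $\|\tau_{\mathbf{y}}\tau_{-\mathbf{z}}\phi\|_{L^1(dw)}$ for $\|\mathbf{z}\|\le r_2$, one uses the pointwise bound coming from R\"osler's formula \eqref{eq:translation-radial}, namely $\|\tau_{\mathbf{y}}\phi\|_{L^\infty}\le C\|\phi\|_{L^\infty}\,w(B(0,r_1))\,w(B(\mathbf{y},r_1))^{-1}$, whence $\|\tau_{\mathbf{y}}\tau_{-\mathbf{z}}\phi\|_{L^2(dw)}\le\|\tau_{\mathbf{y}}\phi\|_{L^2(dw)}\le C\|\phi\|_{L^\infty}w(B(0,r_1))\,w(B(\mathbf{y},r_1))^{-1/2}$. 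Pairing this by Cauchy--Schwarz with the support bound $w\bigl(\bigcup_{\sigma}B(\sigma(\mathbf{y}),r_1+r_2)\bigr)\le |G|\,w(B(\mathbf{y},r_1+r_2))$, the two $\mathbf{y}$-dependent factors combine into the ratio $\bigl(w(B(\mathbf{y},r_1+r_2))/w(B(\mathbf{y},r_1))\bigr)^{1/2}\le C((r_1+r_2)/r_1)^{\mathbf{N}/2}$ by \eqref{eq:behavior}, which yields exactly $C(r_1(r_1+r_2))^{\mathbf{N}/2}\|\phi\|_{L^\infty}$. This $\mathbf{y}$-decaying $L^\infty$ estimate for translates of radial functions is the ingredient missing from your proposal; without it (or an equivalent substitute) the argument does not go through. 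Note also that the paper itself does not reprove the proposition but cites \cite[Proposition 4.4]{DzHej2}.
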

The lemma below is a suitable adaptation of ~\cite[Proposition 4.10]{DzHej2}.
\begin{lema}\label{lem:fg_growth}
Let $a,b>1$ and $f,g$ be measurable functions such that $g$ is radial and continuous, and there are constants $C,c>0$ such that
\begin{equation}\label{eq:fg_growth}
    |f(\mathbf{x})| \leq C\exp(-c\|\mathbf{x}\|^{a}) \text{ and } |g(\mathbf{x})| \leq C\exp(-c\|\mathbf{x}\|^{b}) \text{ for all }\mathbf{x} \in \mathbb{R}^N.
\end{equation}
Then there are constants $C',c'>0$ such that for all $\mathbf{y} \in \mathbb{R}^N$ we have
\begin{align*}
    \int_{\mathbb{R}^N}|\tau_{\mathbf{y}}(f * g)(-\mathbf{x})|\exp(c'd(\mathbf{x},\mathbf{y})^{\min\{a,b\}})\,dw(\mathbf{x}) \leq C'.
\end{align*}
\end{lema}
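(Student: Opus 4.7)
The plan is to imitate the dyadic decomposition strategy used in the proof of~\cite[Proposition 4.10]{DzHej2}: decompose $f$ and $g$ into dyadic annular pieces, apply Proposition~\ref{propo:compact_supports} to control the $L^1$--norm of $\tau_{\mathbf y}$ of each convolution, and combine this with a distance-based support property of the Dunkl translation of a radial compactly supported function so as to localize $\tau_{\mathbf y}(f*g)(-\mathbf x)$ in the orbit distance $d(\mathbf x,\mathbf y)$.

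Concretely, write $f=\sum_{j\geq 0}f_j$ and $g=\sum_{k\geq 0}g_k$, where $f_j$ is supported in $B(0,2^{j+1})$ and $g_k$ is the restriction of $g$ to the dyadic annulus $\{2^k\leq\|\mathbf z\|<2^{k+1}\}$ (with $g_0$ supported in $B(0,2)$); note that each $g_k$ is still radial. Combining~\eqref{eq:fg_growth} with~\eqref{eq:behavior} gives $\|f_j\|_{L^1(dw)}\leq C\,2^{j\mathbf N}e^{-c2^{ja}}$ and $\|g_k\|_{L^{\infty}}\leq Ce^{-c2^{kb}}$. Proposition~\ref{propo:compact_supports} (with $r_1=2^{k+1}$ for the radial factor $g_k$ and $r_2=2^{j+1}$ for $f_j$) then yields
$$\|\tau_{\mathbf y}(f_j*g_k)\|_{L^1(dw)}\leq C\bigl(2^{k+1}(2^{j+1}+2^{k+1})\bigr)^{\mathbf N/2}\|g_k\|_{L^{\infty}}\|f_j\|_{L^1(dw)}\leq C\,2^{M(j+k)}e^{-c(2^{ja}+2^{kb})}$$
for some $M$ depending only on $\mathbf N$.

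To engage the weight $e^{c'd(\mathbf x,\mathbf y)^{\min\{a,b\}}}$, we exploit the following distance-based support property. Since $g_k$ is radial and supported in $B(0,2^{k+1})$, Rösler's formula~\eqref{eq:translation-radial} together with the identity
$$A(\mathbf z,\mathbf u,\eta)^2=\sum_i\lambda_i\|\mathbf u-\sigma_i(\mathbf z)\|^2\geq d(\mathbf z,\mathbf u)^2\quad\bigl(\eta=\textstyle\sum_i\lambda_i\sigma_i(\mathbf z)\in\operatorname{conv}\mathcal O(\mathbf z),\;\sum_i\lambda_i=1\bigr)$$
forces the kernel $g_k(\mathbf z,\mathbf u)=\tau_{\mathbf z}g_k(-\mathbf u)$ to vanish unless $d(\mathbf z,\mathbf u)\leq 2^{k+1}$. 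Using the commutation $\tau_{\mathbf y}(f_j*g_k)=f_j*\tau_{\mathbf y}g_k$ and iterating the same analysis as in the proof of~\cite[Proposition 4.10]{DzHej2}, one verifies that $\mathbf x\mapsto\tau_{\mathbf y}(f_j*g_k)(-\mathbf x)$ is supported in $\{\mathbf x:d(\mathbf x,\mathbf y)\leq C(2^j+2^k)\}$ for a fixed constant $C$.

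Combining the $L^1$ bound with the support property,
\begin{align*}
\int_{\mathbb R^N}|\tau_{\mathbf y}(f*g)(-\mathbf x)|e^{c'd(\mathbf x,\mathbf y)^{\min\{a,b\}}}\,dw(\mathbf x)
&\leq\sum_{j,k\geq 0}e^{c'(C(2^j+2^k))^{\min\{a,b\}}}\|\tau_{\mathbf y}(f_j*g_k)\|_{L^1(dw)}\\
&\leq C\sum_{j,k\geq 0}2^{M(j+k)}\exp\bigl(c'(C(2^j+2^k))^{\min\{a,b\}}-c(2^{ja}+2^{kb})\bigr).
\end{align*}
Since $\min\{a,b\}\leq a,b$, the estimate $(C(2^j+2^k))^{\min\{a,b\}}\leq C'(2^{ja}+2^{kb})$ combined with $c'$ small enough bounds the exponent above by $-\tfrac{c}{2}(2^{ja}+2^{kb})$, which absorbs the polynomial prefactor $2^{M(j+k)}$ and makes the double sum converge uniformly in $\mathbf y$. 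The \emph{main obstacle} is the verification of the distance-based support of $\tau_{\mathbf y}(f_j*g_k)(-\mathbf x)$ in $\mathbf x$: because $f_j*g_k$ is not radial, its Dunkl translation is not directly covered by Rösler's formula, so the support has to be propagated from the radial factor $g_k$ via a careful use of the commutation of $\tau_{\mathbf y}$ with convolution.
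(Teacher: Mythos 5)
Your proof follows essentially the same route as the paper's: a dyadic decomposition of $f$ and $g$, the $L^1$ bound of Proposition~\ref{propo:compact_supports} applied to each piece $f_j*g_k$, the support property $d(\mathbf{x},\mathbf{y})\lesssim 2^j+2^k$ on the support of $\mathbf{x}\mapsto\tau_{\mathbf{y}}(f_j*g_k)(-\mathbf{x})$ (which the paper uses silently, citing the known description of supports of Dunkl translations of compactly supported functions, while you rederive it from R\"osler's formula for the radial factor), and summation of the resulting double series for $c'$ small. The only deviation is cosmetic: you cut $g$ sharply into annuli, which destroys the continuity of the radial factor required in Proposition~\ref{propo:compact_supports} as stated, whereas the paper uses a smooth dyadic partition of unity $\Psi_n$; replacing your sharp cutoffs by smooth ones removes this blemish and everything else goes through.
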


\begin{proof}
Let $\Psi_0 \in C^{\infty}(-\frac{1}{2},\frac{1}{2})$ and $\Psi \in C^{\infty}(\frac{1}{8},1)$ be such that 
\begin{align*}
    1=\Psi_0(\|\mathbf{x}\|)+\sum_{n=1}^{\infty}\Psi(2^{-n}\|\mathbf{x}\|)=\sum_{n=0}^{\infty}\Psi_n(\|\mathbf{x}\|) \text{ for all }\mathbf{x} \neq 0.
\end{align*}
Set  $f_n(\mathbf{x})=f(\mathbf{x})\Psi_{n}(\|\mathbf{x}\|)$ and $g_j(\mathbf{x})=g(\mathbf{x})\Psi_j(\|\mathbf{x}\|)$, where $n,j \geq 0$. Clearly, $\tau_{\mathbf{y}}(f*g)=\sum_{n,j=0}^{\infty}\tau_{\mathbf{y}}(f_n*g_j)$ (see~\cite[Proposition 4.10]{DzHej2} for details). Since ${\rm supp}\,f_n \subseteq B(0,2^n)$ and ${\rm supp}\,g_j \subseteq B(0,2^j)$, we have 
$${\rm supp}\,f_n*g_j \subseteq B(0,2^j+2^n). $$ By Proposition~\ref{propo:compact_supports} we obtain
\begin{equation}\label{eq:fg_growth_comp}
\begin{split}
     &\int_{\mathbb{R}^N}|\tau_{\mathbf{y}}(f_n * g_j)(-\mathbf{x})|\exp(c'd(\mathbf{x},\mathbf{y})^{\min\{a,b\}})\,dw(\mathbf{x}) \\&\leq \exp(c'(2^j+2^n)^{\min\{a,b\}}) \int_{\mathbb{R}^N}|\tau_{\mathbf{y}}(f_n * g_j)(-\mathbf{x})|\,dw(\mathbf{x}) \\&\leq C_1 \exp(c'(2^j+2^n)^{\min\{a,b\}})2^{j \frac{\mathbf{N}}{2}}(2^j+2^n)^{\frac{\mathbf{N}}{2}}\|f_n\|_{L^1(dw)}\|g_j\|_{L^{\infty}}.
\end{split}
\end{equation}
By~\eqref{eq:fg_growth}  we have $\|f_n\|_{L^1(dw)} \leq C\exp(-2^{na}c/2)$ and $\|g_j\|_{L^{\infty}} \leq C\exp(-2^{jb}c)$, so~\eqref{eq:fg_growth_comp} leads to
\begin{align*}
  &\int_{\mathbb{R}^N}|\tau_{\mathbf{y}}(f * g)(-\mathbf{x})|\exp(c'd(\mathbf{x},\mathbf{y})^{\min\{a,b\}})\,dw(\mathbf{x}) \\&\leq  C_1 \sum_{n,j=0}^{\infty}
  \exp(c'(2^j+2^n)^{\min\{a,b\}})2^{j \frac{\mathbf{N}}{2}}(2^j+2^n)^{\frac{\mathbf{N}}{2}} \exp(-c2^{na-1}-c2^{jb}). 
\end{align*}
Finally, we see that if $c'>0$ is small enough, then the double series above is convergent, so we are done.
\end{proof}

\begin{proof}[Proof of Theorem~\ref{teo:theo2}]
We write
$$q_1=\mathcal{F}^{-1}(\mathcal{F}q_1^{(0)})=\mathcal{F}^{-1}((\mathcal{F}q_1^{(0)}e^{\varepsilon_0\|\cdot\|^{2}})e^{-\frac{\varepsilon_0}{2}\|\cdot\|^{2}}e^{-\frac{\varepsilon_0}{2}\|\cdot\|^{2}}))=q_1^{(\varepsilon_0)}*h_{\varepsilon_0/2}*h_{\varepsilon_0/2},$$
where $h_{\varepsilon_0/2}$ is the Dunkl heat kernel (see~\eqref{eq:heat_kernel}). This gives
\begin{align*}
    |q_1(\mathbf{x},\mathbf{y})|&=|\tau_{\mathbf{x}}((q_1^{(\varepsilon_0)}*h_{\varepsilon_0/2})*h_{\varepsilon_0/2})(-\mathbf{y})|\leq \int_{\mathbb{R}^N}|\tau_{-\mathbf{y}}(q_1^{(\varepsilon_0)}*h_{\varepsilon_0/2})(\mathbf{z})||h_{\varepsilon_0/2}(\mathbf{x},\mathbf{z})|\,dw(\mathbf{z})\\&\leq \int_{d(\mathbf{x},\mathbf{y})\leq 2d(\mathbf{x},\mathbf{z})}+\int_{d(\mathbf{x},\mathbf{y})\leq 2d(\mathbf{y},\mathbf{z})}=J_1+J_2.
\end{align*}
By Theorem~\ref{teo:heat} applied to $h_{\varepsilon_0/2}(\mathbf{x},\mathbf{z})$,  we have
\begin{align*}
    |J_1| &\leq C \int_{d(\mathbf{x},\mathbf{y})\leq 2d(\mathbf{x},\mathbf{z})} |\tau_{-\mathbf{y}}(q_1^{(\varepsilon_0)}*h_{\varepsilon_0/2})(\mathbf{z})|w(B(\mathbf{x},\varepsilon_0))^{-1}\exp(-c_{\varepsilon_0}d(\mathbf{x},\mathbf{z})^{2})\,dw(\mathbf{z})
    \\&\leq Cw(B(\mathbf{x},1))^{-1}\exp(-c'd(\mathbf{x},\mathbf{y})^{\frac{2\ell}{2\ell-1}})\int_{\mathbb{R}^N} |\tau_{-\mathbf{y}}(q_1^{(\varepsilon_0)}*h_{\varepsilon_0/2})(\mathbf{z})|\,dw(\mathbf{z}),
\end{align*}
where in the last inequality we have used the fact that the measure $dw$ is doubling (see~\eqref{eq:doubling}). The functions $f=q_1^{(\varepsilon_0)}$ and $g=h_{\varepsilon_0/2}$ satisfy the assumptions of Lemma~\ref{lem:fg_growth} with $a=\frac{2\ell}{2\ell-1}$ and $b=2$ respectively (see Theorems~\ref{teo:main_1} and~\ref{teo:heat}), so the last integral is bounded by a constant. 

Thanks to the inequality  $|h_{\varepsilon_0/2}(\mathbf{x},\mathbf{z})| \leq Cw(B(\mathbf{x},\varepsilon_0))^{-1}$ (see Theorem~\ref{teo:heat}), $|J_2|$ is less than
\begin{align*}
&Cw(B(\mathbf{x},\varepsilon_0))^{-1}\int\limits_{d(\mathbf{x},\mathbf{y})\leq 2d(\mathbf{y},\mathbf{z})} |\tau_{-\mathbf{y}}(q_1^{(\varepsilon_0)}*h_{\varepsilon_0/2})(\mathbf{z})|\exp(-cd(\mathbf{y},\mathbf{z})^{\frac{2\ell}{2\ell-1}})\exp(cd(\mathbf{y},\mathbf{z})^{\frac{2\ell}{2\ell-1}})\,dw(\mathbf{z})\\&\leq Cw(B(\mathbf{x},1))^{-1}\exp(-c'd(\mathbf{x},\mathbf{y})^{\frac{2\ell}{2\ell-1}})\int_{\mathbb{R}^N} |\tau_{-\mathbf{y}}(q_1^{(\varepsilon_0)}*h_{\varepsilon_0/2})(\mathbf{z})|\exp(cd(\mathbf{y},\mathbf{z})^{\frac{2\ell}{2\ell-1}})\,dw(\mathbf{z}).
\end{align*}
Since the functions $f=q_1^{(\varepsilon_0)}$ and $g=h_{\varepsilon_0/2}$ satisfy the assumptions of Lemma~\ref{lem:fg_growth} with $a=\frac{2\ell}{2\ell-1}$ and $b=2$ respectively, the last integral is bounded by a constant independent of $\mathbf y$, provided $c>0$ is small enough. The proof is complete. 
\end{proof}

\renewcommand{\thesubsection}{\Alph{subsection}}
\section{Appendix}

\subsection{Proof of Proposition~\ref{propo:definition_equivalence}}\label{sec:definition_equivalence}

\begin{lema}\label{lem:convolution_bounded}
Let $s>1/4$ and let $\Phi$ be a radial $C^{\infty}_c(\mathbb{R}^{N})$-function such that $\int \Phi \,dw=1$ and ${\rm supp}\,\Phi \subset B(0,1)$. There is a constant $C=C_{\Phi}>0$ such that for all $f \in \mathcal{H}_s$ we have
\begin{equation}
    \|\Phi_{1/n}*f\|_{\mathcal{H}_s} \leq C\|f\|_{\mathcal{H}_s}.
\end{equation}
Moreover, 
\begin{equation}\label{eq:approx}
  \lim_{n\to\infty}  \|f-\Phi_{1/n}*f\|_{\mathcal{H}_s} =0 \text{ for all }  f\in\mathcal H_s. 
\end{equation}
Here and subsequently, $\Phi_{1/n}(\mathbf{x})=n^{\mathbf{N}}\Phi(n\mathbf{x})$.
\end{lema}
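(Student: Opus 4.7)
The plan is to address boundedness and convergence separately via a kernel-level analysis of $K_{n}(\mathbf{x},\mathbf{y}):=\tau_{\mathbf{x}}\Phi_{1/n}(-\mathbf{y})$. For boundedness I would use a Schur-type estimate. The Cauchy--Schwarz inequality combined with the $L^{1}$-contractivity~\eqref{eq:translation-bounded} of the Dunkl translation on radial functions gives
$$|(\Phi_{1/n}*f)(\mathbf{x})|^{2} \leq \|\Phi\|_{L^{1}(dw)}\int |f(\mathbf{y})|^{2}\,|K_{n}(\mathbf{x},\mathbf{y})|\,dw(\mathbf{y}).$$
Multiplying by $\eta(\mathbf{x},s)$, integrating over $\mathbf{x}$, and using Fubini, the task reduces to bounding $\int |K_{n}(\mathbf{x},\mathbf{y})|\,dw(\mathbf{x})$ uniformly in $\mathbf{y}$ and comparing $\eta(\mathbf{x},s)$ with $\eta(\mathbf{y},s)$ on $\operatorname{supp}K_{n}$. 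The first of these follows from the symmetry $\tau_{\mathbf{x}}\Phi_{1/n}(-\mathbf{y})=\tau_{-\mathbf{y}}\Phi_{1/n}(\mathbf{x})$, which is visible directly from~\eqref{eq:translation}, followed by another application of~\eqref{eq:translation-bounded}.

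The weight comparison is both the crux and the main obstacle, because the Dunkl translation does not respect Euclidean supports: although $\Phi_{1/n}$ sits in $B(0,1/n)$, the kernel $K_{n}$ is only confined to an orbit-thickened set, not a Euclidean $1/n$-neighborhood. R\"osler's formula~\eqref{eq:translation-radial} nonetheless forces $A(\mathbf{x},\mathbf{y},\eta)\leq 1/n$ for some $\eta\in\operatorname{conv}\mathcal{O}(\mathbf{x})$ on $\operatorname{supp}K_{n}$; convexity together with $\|\sigma(\mathbf{x})\|=\|\mathbf{x}\|$ yields $\|\eta\|\leq\|\mathbf{x}\|$, and Cauchy--Schwarz then gives $A(\mathbf{x},\mathbf{y},\eta)^{2}\geq (\|\mathbf{x}\|-\|\mathbf{y}\|)^{2}$. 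Hence $\big|\|\mathbf{x}\|-\|\mathbf{y}\|\big|\leq 1/n$, and the elementary inequality $|\sqrt{1+u^{2}}-\sqrt{1+v^{2}}|\leq |u-v|$ delivers $\eta(\mathbf{x},s)\leq e^{s/n}\eta(\mathbf{y},s)\leq e^{s}\eta(\mathbf{y},s)$. Assembling the three ingredients completes the uniform bound.

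For~\eqref{eq:approx} the strategy is density. For $f\in C_{c}^{\infty}(\mathbb{R}^{N})$, using $\int\Phi\,dw=1$ together with $\tau_{\mathbf{x}}f(\mathbf{0})=f(\mathbf{x})$ I would write
$$(\Phi_{1/n}*f)(\mathbf{x})-f(\mathbf{x})=\int \Phi_{1/n}(\mathbf{y})\big[\tau_{\mathbf{x}}f(-\mathbf{y})-f(\mathbf{x})\big]\,dw(\mathbf{y}).$$
Concentration of $\Phi_{1/n}$ at the origin and continuity of $\mathbf{y}\mapsto\tau_{\mathbf{x}}f(-\mathbf{y})$ at $\mathbf{0}$ yield pointwise convergence $(\Phi_{1/n}*f)(\mathbf{x})\to f(\mathbf{x})$. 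The support bound $\big|\|\mathbf{x}\|-\|\mathbf{y}\|\big|\leq 1/n$ confines $\operatorname{supp}(\Phi_{1/n}*f)$ to a fixed compact annular neighborhood of $\operatorname{supp} f$ on which $\eta(\cdot,s)$ is bounded, so dominated convergence promotes the pointwise statement to $\mathcal{H}_{s}$-convergence. The general case follows from density of $C_{c}^{\infty}(\mathbb{R}^{N})$ in $\mathcal{H}_{s}$ (truncate the support, then mollify in the Euclidean sense on a compact set where $\eta(\cdot,s)$ is bounded) combined with the boundedness bound via a standard $3\varepsilon$ argument.
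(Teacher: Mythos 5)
Your proposal is correct and follows essentially the same route as the paper: the boundedness is the same Cauchy--Schwarz/Schur-type argument, combining the $L^1$-contractivity \eqref{eq:translation-bounded} of radial translations (used in both variables via the symmetry of the kernel) with the support localization coming from R\"osler's formula \eqref{eq:translation-radial}, which yields the weight comparison $\eta(\mathbf{x},s)\lesssim e^{s}\eta(\mathbf{y},s)$ on the support of $\tau_{\mathbf{x}}\Phi_{1/n}(-\cdot)$ — your derivation via $\big|\|\mathbf{x}\|-\|\mathbf{y}\|\big|\leq 1/n$ and the paper's via $d(\mathbf{x},\mathbf{y})\leq 1$ are the same estimate. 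The only (immaterial) difference is in \eqref{eq:approx}: the paper reduces to compactly supported $f\in\mathcal{H}_s$ and invokes the $L^2(dw)$ approximate-identity property through the Dunkl transform, whereas you reduce further to $C_c^{\infty}$ and argue by pointwise plus dominated convergence; both are routine once the supports are confined to a fixed compact set.
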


\begin{proof}
Let us note that by the definition of $\eta(\mathbf{x},s)$ (see~\eqref{eq:eta}), there is a constant $C>0$ such that for all $\mathbf{x},\mathbf{y} \in \mathbb{R}^{N}$ and $s>1/4$ we have 
\begin{equation}\label{eq:eta_two_sides}
 e^{s\|\mathbf{x}\|} \leq \eta(\mathbf{x},s) \leq  C e^{s\|\mathbf{x}\|} \leq C e^{sd(\mathbf{x},\mathbf{y})+s\|\mathbf{y}\|},   
\end{equation}
therefore, by the Cauchy--Schwarz inequality,
\begin{equation}\label{eq:approx_CS}
\begin{split}
    &\|\Phi_{1/n}*f\|^2_{\mathcal{H}_s} \leq C\int_{\mathbb{R}^{N}} \Big|\int_{\mathbb{R}^N}\Phi_{1/n}(\mathbf{x},\mathbf{y})f(\mathbf{y})\,dw(\mathbf{y})\Big|^2e^{s\|\mathbf{x}\|}\,dw(\mathbf{x}) \\& \leq C\int_{\mathbb{R}^{N}} \int_{\mathbb{R}^N}|\Phi_{1/n}(\mathbf{x},\mathbf{y})|\,dw(\mathbf{y})\int_{\mathbb{R}^N}|\Phi_{1/n}(\mathbf{x},\mathbf{y})||f(\mathbf{y})|^2\,dw(\mathbf{y})e^{s\|\mathbf{x}\|}\,dw(\mathbf{x}) .
\end{split}
\end{equation}
Since $\Phi$ is radial, by~\eqref{eq:translation-bounded} (see also~\eqref{eq:change_var}) we have
\begin{equation}\label{eq:translation_bounded}
 \int_{\mathbb{R}^{N}}|\Phi_{1/n}(\mathbf{x},\mathbf{y})|\,dw(\mathbf{y}) \leq \int_{\mathbb{R}^{N}}|\Phi(\mathbf{y})|\,dw(\mathbf{y}) \leq C.
\end{equation}
Consequently, combining~\eqref{eq:eta_two_sides}  and~\eqref{eq:approx_CS} we get
\begin{equation}\label{eq:approx_consequence}
   \|\Phi_{1/n}*f\|^2_{\mathcal{H}_s} \leq C' \int_{\mathbb{R}^N}|f(\mathbf{y})|^2e^{s\|\mathbf{y}\|}\int_{\mathbb{R}^{N}}|\Phi_{1/n}(\mathbf{x},\mathbf{y})|e^{sd(\mathbf{x},\mathbf{y})}\,dw(\mathbf{x})\,dw(\mathbf{y}). 
\end{equation}
Because ${\rm supp}\,\Phi_{1/n} \subseteq B(0,1)$ for all $n \in \mathbb{N}$ and $\Phi_{1/n}$ is radial,~\eqref{eq:translation-radial} implies that ${\rm supp}\,\Phi_{1/n}(\cdot,\mathbf{y}) \subset \mathcal{O}(B(\mathbf{y},1))$ for all $\mathbf{y} \in \mathbb{R}^N$. Therefore, $d(\mathbf{x},\mathbf{y}) \leq 1$ for all $\mathbf{x} \in {\rm supp}\,\Phi_{1/n}(\cdot,\mathbf{y})$, so applying~\eqref{eq:translation_bounded} to~\eqref{eq:approx_consequence} we get
\begin{align*}
    \| \Phi_{1\slash n} * f \|^2_{\mathcal{H}_s} \leq C'e^{s}\int_{\mathbb{R}^{N}}|f(\mathbf{y})|^2e^{s\|\mathbf{y}\|}\,dw(\mathbf{y}) \leq C''e^s\|f\|_{\mathcal{H}_s}^2,
\end{align*}
where in the last inequality we have used~ the first inequality of~\eqref{eq:eta_two_sides}. 
 
To finish the proof it suffices to show that~\eqref{eq:approx} holds  for compactly supported $\mathcal H_s$-functions, because they form a dense set there. Fix $f\in \mathcal H_s$. Let $R>0$ be such that $\text{supp}\, f\subseteq B(0,R)$. Then ${\rm supp\,} f*\Phi_{1/n}\subset B(0,R+1)$. By~\eqref{eq:eta_comp} we get
 \begin{equation}
   \| f*\Phi_{1/n} -f\|^2_{\mathcal H_s} \leq e^{(R+1)s +1}\| f*\Phi_{1\slash n} -f\|^2_{L^2(dw)}. 
 \end{equation}
 The right-hand side of the above inequality tends to zero, since one can easily prove (using the Dunkl transform) that $\Phi_{1/n}$ is an approximate of the identity on $L^2(dw)$.  
\end{proof}

\begin{proof}[Proof of Proposition~\ref{propo:definition_equivalence}~\eqref{numitem:a_def}$\Rightarrow$\eqref{numitem:b_def}]
Let $\boldsymbol f=\{f_n\}_{n \in \mathbb{N}} \subset C^{\infty}_c(\mathbb{R}^{N})$  be a Cauchy sequence  in $V_{\ell,s}$. Clearly, by completeness of $\mathcal H_s$, there is $f\in\mathcal H_s\subset L^2(dw)$ such that $\lim_{n\to \infty} \| f_n-f\|_{\mathcal H_s}=0$.  Let $|\beta| \leq \ell$, by Corollary~\ref{coro:garding_pre} the sequence $\{T^{\beta}f_n\}_{n \in \mathbb{N}}$ is a Cauchy sequence in $\mathcal{H}_s$, thus  it converges to a function $f_{\beta,s}$ in $\mathcal H_s$ and in $L^2(dw)$ as well.   Let $\varphi \in C^{\infty}_c(\mathbb{R}^N)$. Integrating by parts we obtain 
\begin{equation}\begin{split}\label{eq:by_parts}
   (-1)^{|\beta|} \int_{\mathbb{R}^N} f(\mathbf x)T^\beta \varphi(\mathbf x)\, dw(\mathbf x)&  =\lim_{n\to \infty }
    (-1)^{|\beta|} \int_{\mathbb{R}^N} f_n(\mathbf x)T^\beta \varphi(\mathbf x)\, dw(\mathbf x)\\
    &=\lim_{n\to \infty}\int_{\mathbb{R}^N} T^{\beta}f_n(\mathbf x)\varphi(\mathbf x)\, dw(\mathbf x)\\
    &=\int_{\mathbb{R}^N} f_{\beta, s}(\mathbf x)\varphi(x)\, dw(\mathbf{x}). 
\end{split}\end{equation}
Assume now that ${\boldsymbol g}=\{ g_n\}_{n\in\mathbb N}$ is another Cauchy sequence in $V_{\ell, s}$, such that $\{g_n\}_{n \in \mathbb{N}}$ converge to the $f$ in $\mathcal H_s$. Then \eqref{eq:by_parts} implies that $g_{\beta,s}=f_{\beta,s}$ thus $\{g_n\}_{n\in\mathbb N}$ corresponds to the same element in $V_{\ell,s}$. Hence we have proved that for every element $\boldsymbol f$ in $V_{\ell,s}$  we can find a unique element  in $f\in \mathcal H_s$ which satisfies \eqref{eq:weak_der}.
\end{proof}

\begin{proof}[Proof of Proposition~\ref{propo:definition_equivalence}~\eqref{numitem:b_def}$\Rightarrow$\eqref{numitem:a_def}]
Let $\Phi$ be a radial $C^{\infty}_c(\mathbb{R}^{N})$-function such that $\int \Phi \,dw=1$ and ${\rm supp}\,\Phi \subset B(0,1)$.  Let $\Psi$ be a radial $C^\infty_c(\mathbb{R}^N)$-function such that $\Psi \equiv 1$ on $B(0,1)$ and $0 \leq \Psi \leq 1$. For $n \in \mathbb{N}$ we set
$$f_n(\mathbf{x})=\Psi(\mathbf{x}/n)\Phi_{1/n}*f(\mathbf{x}).$$
Since $f \in \mathcal{H}_s$, we have $f_n \in C^{\infty}_c(\mathbb{R}^{N})$ for all $n \in \mathbb{N}$. By iteration of~\eqref{eq:derivative_formula_1}, for all $\beta \in \mathbb{N}_0^{N}$ such that $|\beta| \leq \ell$, there are functions $\Psi_{\beta,\beta',\sigma} \in C^{\infty}_c(\mathbb{R}^N)$ such that 
\begin{align*}
    T^{\beta}f_n(\mathbf{x})&=T^{\beta}(\Phi_{1/n}*f)(\mathbf{x})\Psi(\mathbf{x}/n)\\&\quad +\sum_{\sigma \in G}\sum_{\beta' \in \mathbb{N}^{N}_0, \,|\beta'| <  |\beta|} n^{|\beta'|-|\beta|} T^{\beta'}(\Phi_{1/n}*f)(\sigma(\mathbf{x}))\Psi_{\beta,\beta',\sigma}
    (\mathbf{x}\slash n)
\end{align*}
(see also~\eqref{eq:derivative_iteration}). Therefore, by the definition of $f_{\beta',s}$, we get
\begin{equation}\label{eq:convolution_derivative}
\begin{split}
    T^{\beta}f_n(\mathbf{x})&=(\Phi_{1/n}*f_{\beta,s})(\mathbf{x})\Psi(\mathbf{x}/n)\\&\quad +\sum_{\sigma \in G}\sum_{\beta' \in \mathbb{N}^{N}_0, \,|\beta'| < |\beta|}n^{|\beta'|-|\beta|}(\Phi_{1/n}*f_{\beta',s})(\sigma(\mathbf{x}))\Psi_{\beta,\beta',\sigma}(\mathbf{x}\slash n).
\end{split}
\end{equation}
 It follows from~\eqref{eq:convolution_derivative} and Lemma~\ref{lem:convolution_bounded} that 
\begin{align*}
    \lim_{n \to \infty}\|T^{\beta}f_n-f_{\beta,s}\|_{\mathcal{H}_s}=0 \text{ for all }|\beta| \leq \ell, 
\end{align*}
which completes the proof of the proposition. 

\end{proof}

\subsection{Proof of Theorem \ref{teo:one_semigroup}.}\label{sec:selfadjoint}
We remark that Theorem \ref{teo:one_semigroup} is the   part~\eqref{numitem:lions_c} of Corollary~\ref{coro:inclusions}. 
{The operator $L^{(\varepsilon)}=(-1)^{\ell+1} \sum_{j=1}^m T_{\zeta j}^{2\ell}-\varepsilon \Delta$ is understood as a differential-difference operator acting on $C^\infty(\mathbb{R}^N)$-functions. We define its action on all $L^2(dw)$-functions by means of distributions, that is, 
\begin{equation}\label{eq:L-weak}
\int_{\mathbb R^N} (L^{(\varepsilon)}f)(\mathbf{x})\varphi(\mathbf{x})\, dw(\mathbf{x})=\int_{\mathbb R^N} f(\mathbf{x})(L^{(\varepsilon)}\varphi)(\mathbf{x})\, dw(\mathbf{x}) \text{ for all } \varphi\in C_c^\infty (\mathbb R^N).
\end{equation}

\begin{lema}\label{lem:characterization}
Let $f\in V_{\ell,s}$. Then $f\in D(A^{(\varepsilon)}_s)$ if and only if $L^{(\varepsilon)}f$ belongs to $\mathcal H_{s}$ in the  sense of distributions (cf. \eqref{eq:L-weak}). 
\end{lema}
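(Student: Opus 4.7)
The plan is to establish both directions from a single integration-by-parts identity that rewrites $b_{s,\varepsilon}(f,\varphi)$, for $\varphi\in C_c^\infty(\mathbb R^N)$, as the pairing of $f$ with $L^{(\varepsilon)}\bigl(\bar\varphi\,\eta(\cdot,s)\bigr)$ in $L^2(dw)$.

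To prove this key identity, I fix $\varphi\in C_c^\infty(\mathbb R^N)$ and set $\psi:=\bar\varphi\,\eta(\cdot,s)$; since $\eta(\cdot,s)\in C^\infty(\mathbb R^N)$ and $\varphi$ is compactly supported, $\psi\in C_c^\infty(\mathbb R^N)$. Because $f\in V_{\ell,s}$, Proposition~\ref{propo:definition_equivalence} applies with the test functions $T_{\zeta_j}^{\ell}\psi$ and $T_j\psi$, which themselves lie in $C_c^\infty(\mathbb R^N)$, yielding
\[\int T_{\zeta_j}^{\ell}f\cdot T_{\zeta_j}^{\ell}\psi\,dw=(-1)^{\ell}\int f\cdot T_{\zeta_j}^{2\ell}\psi\,dw\quad\text{and}\quad \int T_j f\cdot T_j\psi\,dw=-\int f\cdot T_j^{2}\psi\,dw.\]
Summing over $j$ and substituting into Definition~\ref{def:aaaa} produces
\[b_{s,\varepsilon}(f,\varphi)=\int_{\mathbb R^N} f(\mathbf x)\,L^{(\varepsilon)}\psi(\mathbf x)\,dw(\mathbf x),\]
with $L^{(\varepsilon)}=(-1)^{\ell+1}\sum_{j=1}^{m}T_{\zeta_j}^{2\ell}-\varepsilon\Delta$ acting on $C_c^\infty(\mathbb R^N)$.

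For the forward direction, if $f\in D(A_s^{(\varepsilon)})$ then $b_{s,\varepsilon}(f,\varphi)=\int A_s^{(\varepsilon)}f\cdot\bar\varphi\,\eta(\cdot,s)\,dw$ for every $\varphi\in C_c^\infty(\mathbb R^N)$. Combined with the key identity, and observing that $\phi\mapsto\bar\phi/\eta(\cdot,s)$ is a bijection of $C_c^\infty(\mathbb R^N)$ onto itself (so every $\phi\in C_c^\infty(\mathbb R^N)$ arises as $\bar\varphi\,\eta(\cdot,s)$), this gives
\[\int_{\mathbb R^N} f\cdot L^{(\varepsilon)}\phi\,dw=\int_{\mathbb R^N} A_s^{(\varepsilon)}f\cdot\phi\,dw\qquad\text{for every }\phi\in C_c^\infty(\mathbb R^N),\]
which is precisely~\eqref{eq:L-weak}, exhibiting $L^{(\varepsilon)}f=A_s^{(\varepsilon)}f\in\mathcal H_s$ distributionally. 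Conversely, assume $L^{(\varepsilon)}f=h\in\mathcal H_s$ in the sense of~\eqref{eq:L-weak}. Given $g\in V_{\ell,s}$, I pick $g_n\in C_c^\infty(\mathbb R^N)$ with $g_n\to g$ in $V_{\ell,s}$. The key identity applied to $\varphi=g_n$, followed by the distributional hypothesis tested against $\bar g_n\,\eta(\cdot,s)\in C_c^\infty(\mathbb R^N)$, yields $b_{s,\varepsilon}(f,g_n)=\int h\cdot\bar g_n\,\eta(\cdot,s)\,dw=\langle h,g_n\rangle_{\mathcal H_s}$. Passing to the limit---permitted by boundedness of $b_{s,\varepsilon}$ on $V_{\ell,s}$ (Propositions~\ref{propo:form} and~\ref{propo:form_perturbation}) and the continuous embedding $V_{\ell,s}\hookrightarrow\mathcal H_s$---gives $b_{s,\varepsilon}(f,g)=\langle h,g\rangle_{\mathcal H_s}$ for every $g\in V_{\ell,s}$, so $g\mapsto b_{s,\varepsilon}(f,g)$ extends to a bounded functional on $\mathcal H_s$. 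Hence $f\in D(A_s^{(\varepsilon)})$ with $A_s^{(\varepsilon)}f=h$.

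The main subtle point lies in the key identity: only order-$\ell$ weak derivatives of $f$ are available, yet $L^{(\varepsilon)}$ is of order $2\ell$. I overcome this by splitting $T_{\zeta_j}^{2\ell}=T_{\zeta_j}^{\ell}\circ T_{\zeta_j}^{\ell}$ and invoking Proposition~\ref{propo:definition_equivalence} only once, with the $C_c^\infty$-test function $T_{\zeta_j}^{\ell}\psi$, rather than trying to transfer all $2\ell$ operators onto $f$ in one step.
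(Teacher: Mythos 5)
Your proof is correct and follows essentially the same route as the paper's: both directions rest on the integration-by-parts identity $b_{s,\varepsilon}(f,\varphi)=\int f\,L^{(\varepsilon)}(\bar\varphi\,\eta(\cdot,s))\,dw$ for $\varphi\in C_c^\infty(\mathbb R^N)$, obtained by moving only $\ell$ (resp.\ one) Dunkl derivatives onto the test function via Proposition~\ref{propo:definition_equivalence}, with the substitution $\varphi\mapsto\varphi\,\eta(\cdot,s)^{\pm1}$ and a density argument in $V_{\ell,s}$ for the converse. Your write-up merely makes explicit the point the paper leaves implicit, namely that only order-$\ell$ weak derivatives of $f$ are available.
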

\begin{proof}
Assume that $f\in D(A^{(\varepsilon)}_s)$. Set $g=A^{(\varepsilon)}_sf\in \mathcal H_s$. Fix $\varphi\in C_c^\infty(\mathbb R^N)$. We may assume that $\varphi$ is real-valued. Define $\psi(\mathbf x)=\varphi(\mathbf x)\eta(\mathbf x, s)^{-1}$. Then $\psi\in C_c^\infty(\mathbb R^N)\subset V_{\ell,s}$. By the definition of $A^{(\varepsilon)}_s$ (see Subsection~\ref{sec:semigroup}) we get 
\begin{equation*}
    \begin{split}
        \int_{\mathbb R^N} g(\mathbf x)\varphi (\mathbf x)\, dw(\mathbf x)
        &=
        \int_{\mathbb R^N} g(\mathbf x)\psi (\mathbf x)\eta(\mathbf x,s)\, dw(\mathbf x)=b_{s,\varepsilon}(f,\psi)\\
        &=-\int_{\mathbb R^N} \sum_{j=1}^m T_{\zeta_j}^\ell f(\mathbf x)T_{\zeta_j}^\ell (\psi(\mathbf x)\eta(\mathbf x,s))\, dw(\mathbf x)\\
        &\quad +\varepsilon \int_{\mathbb R^N} \sum_{j=1}^N T_{j}f(\mathbf x)T_{j}(\psi(\mathbf x)\eta(\mathbf x,s))\, dw(\mathbf x)\\
        &=\int_{\mathbb R^N} f(\mathbf x) L^{(\varepsilon)}\varphi(\mathbf x)\, dw(\mathbf x),
    \end{split}
\end{equation*}
which proves that $g=L^{(\varepsilon)}f$ in the weak sense. 

Converselly, assume that $f\in V_{\ell,s}$ is such that $L^{(\varepsilon)}f\in \mathcal H_s$ in the
weak sense. Set $g=L^{(\varepsilon)}f$. Take $\varphi\in C_c^\infty(\mathbb R^N)$. Then $\varphi(\mathbf x)\eta(\mathbf x,s)\in C_c^\infty(\mathbb{R}^N)$ and 
\begin{equation}\label{eq:L-weak2}
    \begin{split}
        \int_{\mathbb R^N } g(\mathbf x)(\varphi(\mathbf x)\eta(\mathbf x, s))\, dw(\mathbf x)
        &= \int_{\mathbb R^N } f(\mathbf x)L^{(\varepsilon)}(\varphi(\mathbf x)\eta(\mathbf x,s))\, dw(\mathbf x)\\
        &= b_{s,\varepsilon}(f,\varphi). 
    \end{split}
\end{equation}
By a density  argument (see Subsection \ref{weight_sobol}),    the formula ~\eqref{eq:L-weak2} holds for all $\varphi \in V_{\ell,s}$, which implies that $f\in D(A^{(\varepsilon)}_s)$ and $A^{(\varepsilon)}_sf=g$. 
\end{proof}
\begin{coro}\label{coro:inclusions}  Let $\varepsilon \in \{0,\varepsilon_0\}$ and $c_0$ be the constant from \eqref{eq:semigroup_growth}. For every $s_1>s_2>1/4$ we have 
\begin{enumerate}[(a)]
    \item{ $D(A^{(\varepsilon)}_{s_1})\subset D(A^{(\varepsilon)}_{s_2}) \subset D(A^{(\varepsilon)})$ and $ A^{(\varepsilon)}_{s_1}\subset A^{(\varepsilon)}_{s_2}\subset A^{(\varepsilon)}$;}\label{numitem:lions_a}
    \item{$R(\lambda; A^{(\varepsilon)}_{s_1})\subset R(\lambda; A^{(\varepsilon)}_{s_2})\subset R(\lambda; A^{(\varepsilon)})$ for all $\lambda>c_0s_1^{2\ell}$, where $R(\lambda; A^{(\varepsilon)}_{s})$ denotes the resolvent operator, that is, $R(\lambda; A^{(\varepsilon)}_{s_j})=(\lambda I-A^{(\varepsilon)}_{s_j})^{-1}$}\label{numitem:lions_b}
    \item{$S_t^{\{\varepsilon,s_1\}}\subset S_t^{\{\varepsilon,s_2\}}\subset S^{(\varepsilon)}_t$ for all $t>0$.}\label{numitem:lions_c}
\end{enumerate}
\end{coro}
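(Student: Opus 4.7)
The plan is to reduce all three parts to the description of $D(A^{(\varepsilon)}_s)$ supplied by Lemma~\ref{lem:characterization}. First, I would extend that lemma to the unweighted case: the same argument, with $V_{\ell,0}$ in place of $V_{\ell,s}$, $L^2(dw)$ in place of $\mathcal H_s$, and the form $b_{0,\varepsilon}$ in place of $b_{s,\varepsilon}$, gives $D(A^{(\varepsilon)})=\{f\in V_{\ell,0}:L^{(\varepsilon)}f\in L^2(dw)\text{ in the sense of }\eqref{eq:L-weak}\}$ with $A^{(\varepsilon)}f=L^{(\varepsilon)}f$. Monotonicity of $s\mapsto\eta(\mathbf x,s)$ makes the inclusions $\mathcal H_{s_1}\subset\mathcal H_{s_2}\subset\mathcal H_0$ and $V_{\ell,s_1}\subset V_{\ell,s_2}\subset V_{\ell,0}$ transparent, since every Cauchy sequence in the heavier norm is Cauchy in the lighter one with the same limit.

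Part (a) is then immediate: for $f\in D(A^{(\varepsilon)}_{s_1})$ we have $f\in V_{\ell,s_2}$ and $L^{(\varepsilon)}f=A^{(\varepsilon)}_{s_1}f\in\mathcal H_{s_1}\subset\mathcal H_{s_2}$ in the distributional sense, so the characterization at level $s_2$ places $f$ in $D(A^{(\varepsilon)}_{s_2})$ with $A^{(\varepsilon)}_{s_2}f=L^{(\varepsilon)}f=A^{(\varepsilon)}_{s_1}f$; the passage to $A^{(\varepsilon)}$ is identical, using the extended characterization.

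For (b), enlarging the constant $c_0$ from Theorem~\ref{teo:lions_app} if necessary, I may assume $c_0 s^{2\ell}>\lambda_0$ for every $s>1/4$, where $\lambda_0$ comes from Proposition~\ref{propo:b_0}; the bounds \eqref{eq:semigroup_growth} and \eqref{eq:semigroup_growth_pre} then place the half-line $(c_0 s_1^{2\ell},\infty)$ inside the resolvent sets of all three generators. Given $g\in\mathcal H_{s_1}$ and such $\lambda$, the element $f=R(\lambda;A^{(\varepsilon)}_{s_1})g$ lies in $D(A^{(\varepsilon)}_{s_2})$ by (a) and satisfies $(\lambda I-A^{(\varepsilon)}_{s_2})f=g$; injectivity of $\lambda I-A^{(\varepsilon)}_{s_2}$ forces $f=R(\lambda;A^{(\varepsilon)}_{s_2})g$, and the analogous step handles the remaining inclusion.

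Finally, (c) follows from the Laplace representation
\[
R(\lambda;A^{(\varepsilon)}_s)g=\int_0^{\infty} e^{-\lambda t}\,S_t^{\{\varepsilon,s\}}g\,dt,\qquad g\in\mathcal H_s,\ \lambda>c_0 s^{2\ell},
\]
applied at both levels: for $g\in\mathcal H_{s_1}\subset\mathcal H_{s_2}$ the $\mathcal H_{s_2}$-valued maps $t\mapsto S_t^{\{\varepsilon,s_1\}}g$ and $t\mapsto S_t^{\{\varepsilon,s_2\}}g$ are continuous with exponential growth at most $e^{c_0 s_1^{2\ell}t}\|g\|_{\mathcal H_{s_1}}$, and by (b) their Laplace transforms agree on $(c_0 s_1^{2\ell},\infty)$, so uniqueness of the vector-valued Laplace transform yields pointwise equality in $t$; the inclusion $S_t^{\{\varepsilon,s_2\}}\subset S_t^{(\varepsilon)}$ is obtained identically. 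The only genuinely delicate point in the plan is the unweighted extension of Lemma~\ref{lem:characterization}; once that weak description of each domain is in place, parts (a)--(c) become formal consequences of standard semigroup theory.
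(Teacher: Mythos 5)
Your treatment of (a) and (b) matches the paper's: both reduce to the weak characterization of $D(A^{(\varepsilon)}_s)$ supplied by Lemma~\ref{lem:characterization}, which is exactly how the paper disposes of these two parts, and your extension of that characterization to the unweighted case is unproblematic (for $s=0$ one tests against $\varphi\in C_c^\infty(\mathbb R^N)$ directly, with no need for the substitution $\psi=\varphi\,\eta(\cdot,s)^{-1}$; alternatively the Dunkl transform gives $D(A^{(\varepsilon)})$ explicitly). Where you genuinely diverge is part (c). The paper shifts each generator by $\omega I$ to obtain contraction semigroups, observes that by (a) and (b) the Yosida approximations $\lambda^2R(\lambda;\widetilde{A^{(\varepsilon)}_{s_j}})-\lambda I$ are nested, and reads off the inclusion of the semigroups from the exponential-formula construction inside the proof of the Hille--Yosida theorem. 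You instead use the Laplace representation of the resolvent together with uniqueness of the vector-valued Laplace transform for continuous $\mathcal H_{s_2}$-valued functions of exponential growth. Both routes are standard and correct, and they consume the same input, namely the resolvent inclusion from (b). Your version is arguably cleaner in that it argues only with stated theorems rather than with the internals of a proof, but it does require you to cite (or prove, e.g.\ by applying bounded functionals and reducing to the scalar case) the uniqueness theorem for vector-valued Laplace transforms; you should make that reference explicit. One small bookkeeping remark: the enlargement of $c_0$ so that $c_0s^{2\ell}>\lambda_0$ for all $s>1/4$ is harmless, since it only weakens \eqref{eq:semigroup_growth}, but as $c_0$ is fixed for the remainder of the paper this convention should be recorded once at the outset.
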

\begin{proof} 
The statements ~\eqref{numitem:lions_a} and~\eqref{numitem:lions_b} are consequences of Lemma \ref{lem:characterization}. To prove~\eqref{numitem:lions_c} we take $\omega>0$ sufficiently large. Then, by the Lions theorem (see Theorem \ref{teo:Lions}),  the operators $\widetilde{ A^{(\varepsilon)}_{s_1}}=A^{(\varepsilon)}_{s_1}-\omega I$, $\widetilde{A^{(\varepsilon)}_{s_2}}=A^{(\varepsilon)}_{s_2}-\omega I$, and $\widetilde{ A^{(\varepsilon)}}=A^{(\varepsilon)}-\omega I$, generate contraction semigroups  $\{e^{-t\omega}S_t^{\{\varepsilon,s_1\}}\}_{t \geq 0}$,   $\{e^{-t\omega}S_t^{\{\varepsilon,s_2\}}\}_{t \geq 0}$, and  $\{e^{-t\omega}S^{(\varepsilon)}_t\}_{t \geq 0}$ respectively (each semigroup acts on its corresponding Hilbert space $\mathcal H_{s_j}$). It follows from the statements~\eqref{numitem:lions_a} and~\eqref{numitem:lions_b} that the Yosida approximations  of $\widetilde{A^{(\varepsilon)}_{s_j}}$ (see \cite[Section 3.1]{Pazy}) satisfy  
$$ \lambda^2 R(\lambda; \widetilde{A^{(\varepsilon)}_{s_1}})-\lambda I\subset \lambda^2 R(\lambda; \widetilde{ A^{(\varepsilon)}_{s_2}})-\lambda I\subset\lambda^2 R(\lambda, \widetilde{A^{(\varepsilon)}})-\lambda I,$$ 
for $\lambda>0$, which implies~\eqref{numitem:lions_c}, by the proof of the Hille-Yosida theorem (see \cite{Pazy}). 
\end{proof}
}
\subsection{Proof of Proposition  \ref{pro:core}}\label{sec:core}
Since $\lambda>c_0 s^{2\ell}$, the operator $\lambda I-A_{s}^{(\varepsilon)}$ is invertible on $\mathcal H_s$. Let $R(\lambda; A_s^{(\varepsilon)})$ denote its inverse. Since $R(\lambda; A^{(\varepsilon)}_s)^n$ is bounded operator on $\mathcal H_s$,  it suffices to prove that 
$(\lambda I-A^{(\varepsilon)}_s)^n (C_c^\infty(\mathbb R^N))$ is a dense subspace in $\mathcal H_s$. For this purpose let 
$$\mathcal V_s^\infty=\{ f\in C^\infty (\mathbb R^N): T^\beta f\in \mathcal H_s\quad  \text{\rm for every } \beta \in \mathbb N_0^N\}.$$ 
We claim  that $\mathcal V_s^\infty$ is a core for $(\lambda I-A^{(\varepsilon)}_s)^n$, because for $f\in C_c^\infty(\mathbb R^N)$ we have $T^\beta R(\lambda; A^{(\varepsilon)}_s)^n f=  R(\lambda; A^{(\varepsilon)}_s)^n T^\beta f \in D((A^{(\varepsilon)}_s)^n)\subset \mathcal H_s$ and, consequently, $R(\lambda; A^{(\varepsilon)}_s)^n f\in \mathcal V_s^\infty$. Therefore $C_c^\infty(\mathbb R^N)\subset (\lambda I-A^{(\varepsilon)}_s)^n (\mathcal V_s^\infty)$, which proves the claim. 

Let $\Psi$ be as in Appendix~\ref{sec:definition_equivalence} and let $f\in\mathcal V_s^\infty$. Then $f_j(\mathbf x)=\Psi(\mathbf x\slash j) f(\mathbf x) \in C_c^\infty(\mathbb R^N)$ for all $j \in \mathbb{N}$. It is not difficult to prove that $\lim_{j\to\infty} \| T^\beta f_j-T^\beta f\|_{\mathcal H_s}=0$ for every multi-index $\beta\in \mathbb N_0^N$, which finishes the proof of the proposition.

\end{document}